\newcommand{\reaction}[1]{\overset{#1}{\rightarrow}}
\newcommand{\revreaction}[2]{\overset{#1}{\underset{#2}{\rightleftharpoons}}}
\newcommand{\iterateD}[3]{\vecchi^{\rb{#1}}\!\rb{#2,#3}}
\newcommand{\iterateDi}[4]{\chi_{#1}^{\rb{#2}}\!\rb{#3,#4}}
\newcommand{\iterateDj}[4]{\vecchi_{#1}^{\rb{#2}}\!\rb{#3,#4}}
\newcommand{\psiif}[2]{\psi_{#1}\!\rb{#2}}
\newcommand{\vecpsif}[1]{\vecpsi\!\rb{#1}}
\newcommand{\vecpsiif}[2]{\vecpsi_{#1}\!\rb{#2}}
\newcommand{\vecpsimf}[2]{\vecpsi^{\rb{#1}}\!\rb{#2}}
\newcommand{\vecpsiimf}[3]{\vecpsi_{#1}^{\rb{#2}}\!\rb{#3}}
\newcommand{\flowB}[2]{\vecphi_{#1}\!\rb{#2}}
\newcommand{\flowC}[2]{\vecphi\!\rb{#1,#2}}
\newcommand{\flowCi}[3]{\phi_{#1}\!\rb{#2,#3}}
\newcommand{\flowE}[2]{\vecvarphi\!\rb{#1,#2}}
\newcommand{\flowEj}[3]{\vecvarphi_{#1}\!\rb{#2,#3}}
\newcommand{\scf}[2]{#1\!\rb{#2}}
\newcommand{\scif}[3]{#1_{#2}\!\rb{#3}}
\newcommand{\vecf}[2]{\vec{#1}\!\rb{#2}}
\newcommand{\vecif}[3]{\vec{#1}_{#2}\!\rb{#3}}
\newcommand{\vecmf}[3]{\vec{#1}^{\rb{#2}}\!\rb{#3}}
\newcommand{\vecinvf}[2]{\vec{#1}^{\,-1}\!\rb{#2}}
\newcommand{\lambdam}{\lambda_{_-}}
\newcommand{\lambdap}{\lambda_{_+}}
\newcommand{\lambdapm}{\lambda_{_\pm}}
\newcommand{\sigmam}{\sigma_{_-}}
\newcommand{\sigmap}{\sigma_{_+}}
\newcommand{\sigmapm}{\sigma_{_\pm}}
\newcommand{\mymbox}[1]{\quad\mbox{#1}\quad}
\newcommand{\fracslash}[2]{{#1}/{#2}}
\newcommand{\sfrac}[2]{\textstyle\frac{#1}{#2}\displaystyle}
\newcommand{\ds}{\displaystyle{}}
\newcommand{\dsm}[1]{$\ds{#1}$}
\newcommand{\vep}{\varepsilon}
\newcommand{\st}{\,:\,}
\newcommand{\shalf}{\sfrac{1}{2}}
\newcommand{\rb}[1]{\left(#1\right)}
\newcommand{\sqb}[1]{\left[#1\right]}
\newcommand{\setb}[1]{\left\{#1\right\}}
\newcommand{\Cr}[2]{\ifthenelse{\equal{#2}{}}{C^{#1}}{C^{#1}\!\rb{#2}}}
\newcommand{\Nd}[1]{\mathbb{N}^{#1}}
\newcommand{\Rn}{\mathbb{R}^n}
\newcommand{\Rd}[1]{\mathbb{R}^{#1}}
\newcommand{\ballO}[1]{\cB_{#1}}
\newcommand{\abs}[1]{\left|#1\right|}
\newcommand{\norm}[1]{\left\|#1\right\|}
\renewcommand{\vec}[1]{\textbf{#1}}
\newcommand{\mat}[1]{\textbf{#1}}
\newcommand{\diag}[1]{\textsf{diag}\rb{#1}}
\renewcommand{\det}[1]{\abs{#1}}
\newcommand{\integral}[4]{\int_{#1}^{#2} \, #3 \, d#4}
\newcommand{\deriv}[2]{\frac{d#1}{d#2}}
\newcommand{\derivslash}[2]{\fracslash{d#1}{d#2}}
\newcommand{\pderiv}[2]{\frac{\partial#1}{\partial#2}}
\renewcommand{\max}[2]{\underset{#1}{\textsf{max}}\setb{#2}}
\renewcommand{\min}[2]{\underset{#1}{\textsf{min}}\setb{#2}}
\newcommand{\seq}[4]{\setb{{#1}_{#2}}_{\ifthenelse{\equal{#3}{}}{}{#2=#3}}^{#4}}
\newcommand{\bigO}[1]{\cO\!\rb{#1}}
\newcommand{\litO}[1]{\textsf{o}\!\rb{#1}}
\newcommand{\lnb}[1]{\textsf{ln}\!\rb{#1}}
\renewcommand{\exp}[1]{\textsf{e}^{#1}}
\newcommand{\floor}[1]{\left\lfloor#1\right\rfloor}
\newcommand{\matLambda}{\bm{\Lambda}}
\newcommand{\vecphi}{\bm{\phi}}
\newcommand{\vecvarphi}{\bm{\varphi}}
\newcommand{\vecpsi}{\bm{\psi}}
\newcommand{\vecchi}{\bm{\chi}}
\newcommand{\vecxi}{\bm{\xi}}
\newcommand{\cB}{\mathcal{B}}
\newcommand{\cM}{\mathcal{M}}
\newcommand{\cO}{\mathcal{O}}
\newcommand{\hmu}{\widehat{\mu}}
\newcommand{\twobyonematrix}[2]{\begin{pmatrix}#1\\#2\end{pmatrix}}
\newcommand{\twobytwomatrix}[4]{\begin{pmatrix}#1&#2\\#3&#4\end{pmatrix}}
\theoremstyle{plain}
\newtheorem{thm}{\bf{Theorem}}
\newtheorem{cor}[thm]{\bf{Corollary}}
\newtheorem{lem}[thm]{\bf{Lemma}}
\newtheorem{prop}[thm]{\bf{Proposition}}
\newtheorem{claim}[thm]{\bf{Claim}}
\theoremstyle{definition}
\newtheorem{defn}[thm]{\bf{Definition}}
\theoremstyle{remark}
\newtheorem{rem}[thm]{\bf{Remark}}
\renewenvironment{proof}{\medskip\noindent\em Proof:
\rm}{\hspace*{\fill}$\square$\medskip}
\begin{document}

\pagestyle{fancy}

\renewcommand{\thefootnote}{\fnsymbol{footnote}}

\vspace{-1cm}

\title{Asymptotic Behaviour Near a Nonlinear Sink}

\author{Matt S. Calder\footnotemark[1]\\Department of Applied Mathematics,
University of Western Ontario\\1151 Richmond Street, London, Ontario, Canada, N6A 5B7 \and David
Siegel\footnotemark[2]\\Department of Applied Mathematics,
University of Waterloo\\200 University Avenue West, Waterloo,
Ontario, Canada, N2L 3G1}

\date{}

\footnotetext[1]{Research partially supported by an Ontario Graduate
Scholarship.}

\footnotetext[2]{Research supported by a Natural Sciences and
Engineering Research Council of Canada Discovery Grant.}

\renewcommand{\thefootnote}{\arabic{footnote}}

\maketitle

\begin{abstract} \addcontentsline{toc}{section}{Abstract}
In this paper, we will develop an iterative procedure to determine
the detailed asymptotic behaviour of solutions of a certain class of
nonlinear vector differential equations which approach a nonlinear
sink as time tends to infinity. This procedure is indifferent to
resonance in the eigenvalues. Moreover, we will address the writing
of one component of a solution in terms of the other in the case of a planar
system. Examples will be given, notably the Michaelis-Menten
mechanism of enzyme kinetics.
\end{abstract}

\paragraph{Key words.} Asymptotics, Differential Equations, Iteration, Resonance, Sink,
Concavity, Enzyme, Michaelis-Menten

\paragraph{AMS subject classifications.} Primary: 34E05; Secondary:
80A30

\section{Introduction} \label{sec001}

\subsection{The Problem}

Suppose we are presented with a vector ordinary differential equation with time as the independent variable. Suppose also that the origin is an asymptotically stable equilibrium point, that is, all solutions starting sufficiently close to the origin will approach the origin as time tends to infinity. It is very desirable
to obtain the asymptotic behaviour of solutions as time tends to infinity with sufficient detail when starting close to the origin. In this paper, we will develop an iterative procedure, which is indifferent to resonance in the eigenvalues, to construct asymptotic expansions of solutions with any desired accuracy. This detailed asymptotic behaviour, for example, will enable us to construct phase portraits with more detail than the linearization since the iterative procedure yields higher-order (nonlinear) approximations. Moreover, the detailed behaviour will have implications for concavity results, parameter estimation and reduction methods in chemical kinetics, and it will show the limitations of using traditional power series.

Let ${n\in\Nd{}}$ be the dimension and consider the constant matrix
${\mat{A}\in\Rd{n \times n}}$ and the vector field
${\vec{b}:\Rn\to\Rn}$. In this paper, we will be considering the
general initial value problem
\begin{equation} \label{eq001}
    \dot{\vec{x}} = \mat{A} \vec{x} + \vecf{b}{\vec{x}},
    \quad
    \vec{x}(0) = \vec{x}_0,
\end{equation}
where $t$ is time and ${\dot{}=\derivslash{}{t}}$, when $\mat{A}$
and $\vec{b}$ satisfy certain properties. Note that \eqref{eq001} is equivalent to the integral
equation
\begin{equation} \label{eq002}
    \vec{x}(t) = \exp{t\mat{A}} \vec{x}_0 + \integral{0}{t}{\exp{\rb{t-s}\mat{A}}\vecf{b}{\vec{x}(s)}}{s}.
\end{equation}
The matrix $\mat{A}$ is assumed to be Hurwitz with eigenvalues
$\seq{\lambda}{i}{1}{n}$ having respective real parts
$\seq{\mu}{i}{1}{n}$ satisfying the ordering
\begin{equation} \label{eq003}
    \mu_n \leq \mu_{n-1} \leq \cdots \leq \mu_1 < 0.
\end{equation}
For the vector field $\vec{b}$, we assume there are constants ${\delta,k_1,k_2>0}$ such that ${\vec{b}\in\Cr{1}{\ballO{\delta},\Rn}}$ with
\begin{equation} \label{eq004}
    \norm{\vecf{b}{\vec{x}}} \leq k_1 \norm{\vec{x}}^\alpha
    \mymbox{and}
    \norm{\mat{D}\vecf{b}{\vec{x}}} \leq k_2 \norm{\vec{x}}^\beta
    \mymbox{for all}
    \vec{x} \in \ballO{\delta}
\end{equation}
for some ${\alpha>1}$ and ${\beta>0}$, where
\[
    \norm{\vec{x}} := \sqrt{\sum_{i=1}^nx_i^2} \mymbox{for} \vec{x} \in \Rn,
    \quad
    \ballO{\delta} := \setb{\,\vec{x}\in\Rn\st\norm{\vec{x}}<\delta\,},
    \mymbox{and}
    \mat{D}\vecf{b}{\vec{x}} := \rb{ \pderiv{\scf{b_i}{\vec{x}}}{x_j}}_{i,j=1}^n
\]
are, respectively, the Euclidean norm (chosen arbitrarily), the open
ball of radius $\delta$ centred at the origin, and the Jacobian
matrix.

\begin{rem}
The matrix $\mat{A}$, the vector field $\vec{b}$, the constants
$\seq{\lambda}{i}{1}{n}$ and $\seq{\mu}{i}{1}{n}$, and the constants
$\alpha$ and $\beta$ retain their meaning throughout the paper.
However, the constants $\delta$, $k_1$, and $k_2$ do not. We will be
dealing with many combinations of estimates which are introduced by
saying something like ``there are \emph{some} constants
${\delta,k>0}$ such that...'' and, typically, we take ${\delta>0}$
small enough so that multiple estimates apply.
\end{rem}

We will denote by $\flowB{t}{\vec{x}_0}$ the flow of \eqref{eq001}.
Where necessary, we will write $\flowC{t}{\vec{x}_0}$ with
components $\setb{\flowCi{i}{t}{\vec{x}_0}}_{i=1}^n$. Furthermore,
define the important ratio
\[
    \kappa := \frac{\mu_n}{\mu_1}.
\]
Observe that, by virtue of \eqref{eq003}, ${\kappa \geq 1}$.  We
will need to consider two distinct cases.

\begin{defn}
If ${1\leq\kappa<\alpha}$, the eigenvalues of the matrix $\mat{A}$ are \emph{closely-spaced} relative to the vector field $\vec{b}$. If ${\kappa\geq\alpha}$, the eigenvalues of $\mat{A}$ are \emph{widely-spaced} relative to $\vec{b}$.
\end{defn}

The goal of this paper is to find the detailed asymptotic behaviour
of the flow $\flowB{t}{\vec{x}_0}$ as ${t\to\infty}$. The techniques
we will develop are indifferent to resonance in the eigenvalues of
$\mat{A}$. However, it is important to be clear what resonance is
and when it occurs. There is \emph{resonance} in the eigenvalues if
there exist ${\seq{m}{i}{1}{n}\subset\Nd{}_0}$ and
${j\in\setb{1,\ldots,n}}$ with ${\sum_{i=1}^n m_i \geq 2}$ and
${\lambda_j = \sum_{i=1}^n m_i \lambda_i}$. The \emph{order} of the
resonance is ${\sum_{i=1}^n m_i}$. It is easy to verify that
resonance can only occur if ${\kappa \geq 2}$.

In the remainder of this section, we provide some background
information and establish important estimates. In \S\ref{sec002}, we
focus on the case where the eigenvalues are closely-spaced relative
to the nonlinear part. To approximate the solution
$\flowB{t}{\vec{x}_0}$, we will construct iterates
$\setb{\iterateD{m}{t}{\vec{y}_0}}_{m=1}^\infty$. The difference (in
norm) of the first iterate $\iterateD{1}{t}{\vec{y}_0}$ and the flow
$\flowB{t}{\vec{x}_0}$ is, for any ${\sigma>0}$,
$\bigO{\exp{\alpha\rb{\mu_1+\sigma}t}\norm{\vec{x}_0}^\alpha}$ as
${t\to\infty}$ and ${\norm{\vec{x}_0} \to 0}$. Moreover, each
iteration increases the closeness by a factor which is
$\bigO{\exp{\beta\rb{\mu_1+\sigma}t}\norm{\vec{x}_0}^\beta}$. In
\S\ref{sec003}, we focus on the special case where ${n=2}$,
$\mat{A}$ is diagonalizable, and the eigenvalues are widely-spaced
relative to the nonlinear part. We construct iterates, similar to
the closely-spaced case, to approximate the flow. We also develop a
result on the expression of the second component
$\flowCi{2}{t}{\vec{x}_0}$ of the flow in terms of the first
component $\flowCi{1}{t}{\vec{x}_0}$ when there is resonance and a
quadratic nonlinearity. The general $n$-dimensional case is
presented as well. In \S\ref{sec004}, we apply our results to the
Michaelis-Menten mechanism of an enzyme-substrate reaction to
generalize a result in the authors' \cite{CalderSiegel}. Finally, in
\S\ref{sec005}, we state some open questions.

\subsection{Background}

The two main contributions of Poincar\'{e} relevant to this paper
are normal forms, which originated in his Ph.D. thesis, and a result
often referred to simply as Poincar\'{e}'s Theorem, which he proved
in 1879. These are contained in the first volume (of eleven) of the
\emph{Oeuvres} of Poincar\'{e} \cite{Poincare}.

Normal-form theory involves the substitution of an
analytic, near-identity transformation to simplify the nonlinear
part of an ordinary differential equation. Resonance in the eigenvalues of $\mat{A}$ limits how simple the nonlinear part can be made. See, for example,
\cite{BronsteinKopanskii,ChowLiWang,Perko,Wiggins}.

Consider the two systems
${\dot{\vec{x}}=\mat{A}\vec{x}+\vecf{b}{\vec{x}}}$ and
${\dot{\vec{y}}=\mat{A}\vec{y}}$, where $\vec{b}$ is analytic in a
neighbourhood of the origin and satisfies
${\norm{\vecf{b}{\vec{x}}}=\bigO{\norm{\vec{x}}^2}}$ as
${\norm{\vec{x}} \to 0}$. If $\mat{A}$ has non-resonant eigenvalues
and the convex hull of the eigenvalues does not contain zero, then
Poincar\'{e}'s Theorem says that there is a near-identity,
invertible transformation $\vec{h}$ with both $\vec{h}$ and
$\vec{h}^{-1}$ analytic at the origin such that
${\vecf{h}{\flowB{t}{\vec{x}_0}}=\exp{t\mat{A}}\vecf{h}{\vec{x}_0}}$.
That is, the flows of the two systems are analytically conjugate in
a neighbourhood of the origin. This helps greatly in determining
qualitative properties of solutions of more complicated systems.
Furthermore, if we write
${\vecinvf{h}{\vec{y}}=\vec{y}+\vecf{r}{\vec{y}}}$, then the
relationship
${\flowB{t}{\vec{x}_0}=\exp{t\mat{A}}\vecf{h}{\vec{x}_0}+\vecf{r}{\exp{t\mat{A}}\vecf{h}{\vec{x}_0}}}$
can be used to extract asymptotic expansions.

The Hartman-Grobman Theorem is a conjugacy result similar to
Poincar\'{e}'s Theorem. See, for example,
\cite{BronsteinKopanskii,ChiconeSwanson,Grobman,Hartman1,Hartman2,Hartman3}.
See also \cite{Bonckaert} which addresses the issue of just how
small the neighbourhoods of the origin must be. Conjugacy results
such as the Hartman-Grobman Theorem (and its variations) and
Poincar\'{e}'s Theorem are distinguished by the specifics of the
system: The smoothness of $\vec{b}$, the smoothness of $\vec{h}$,
whether or not the eigenvalues of $\mat{A}$ are resonant, and
whether or not the origin is a hyperbolic fixed point.

The standard methods have well-known limitations.  Many results have
to exclude resonance and those that do not are often limited in
their practicality. Leading-order behaviour of solutions can be
difficult to obtain and more detailed behaviour can be more
difficult still. Finally, the standard methods often give little or
no detail on the relationship between components of solutions.

In this paper, we will frequently encounter a certain class of
functions defined by improper integrals.  Let ${\delta>0}$ be a
constant and define the set
\[
    \Omega_\delta := \setb{ \, \rb{t,\vec{x}} \st t \geq 0, \, \vec{x} \in \Rn, \, \norm{\vec{x}} < \delta \, }.
\]
Let ${\vec{f}\in\Cr{}{\Omega_\delta,\Rn}}$ and suppose there are
${k,\rho>0}$ such that ${\norm{\vecf{f}{t,\vec{x}}} \leq k \,
\exp{-\rho t}}$ for every ${\rb{t,\vec{x}}\in\Omega_\delta}$. Then,
${\vecf{g}{t,\vec{x}}:=\integral{t}{\infty}{\vecf{f}{s,\vec{x}}}{s}}$,
defined for ${\rb{t,\vec{x}}\in\Omega_\delta}$, satisfies
${\norm{\vecf{g}{t,\vec{x}}}\leq\fracslash{k}{\rho}}$ for all
${\rb{t,\vec{x}}\in\Omega_\delta}$. It follows from the Weierstrass
$M$-Test that the integral converges uniformly on $\Omega_\delta$
and ${\vec{g}\in\Cr{}{\Omega_\delta,\Rn}}$. See, for example,
Theorems~14-19 and 14-22 of \cite{Apostol} and \S6.5 of
\cite{LevinsonRedheffer}.

\subsection{Important Exponential Estimates}

\begin{claim} \label{claim001}
For ${\matLambda:=\diag{\lambda_1,\ldots,\lambda_n}}$, ${\norm{\exp{t\matLambda}}=\exp{\mu_1t}}$ and ${\norm{\exp{-t\matLambda}}=\exp{-\mu_nt}=\exp{-\kappa\mu_1t} }$ for every ${t \geq 0}$.
\end{claim}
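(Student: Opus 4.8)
The plan is to reduce the whole statement to one elementary fact: the operator norm of a diagonal matrix, induced by the Euclidean norm (read as the Hermitian norm, since the $\lambda_i$ may be complex), equals the largest modulus among its diagonal entries. First I would note that, $\matLambda$ being diagonal, so is every power of it, whence $\exp{t\matLambda}=\diag{\exp{t\lambda_1},\ldots,\exp{t\lambda_n}}$ and likewise $\exp{-t\matLambda}=\diag{\exp{-t\lambda_1},\ldots,\exp{-t\lambda_n}}$. So it suffices to compute $\norm{\mat{D}}$ for a diagonal matrix $\mat{D}=\diag{d_1,\ldots,d_n}$ and then put $d_i=\exp{\pm t\lambda_i}$.

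For the diagonal-norm formula: for any $\vec{x}$ one has $\norm{\mat{D}\vec{x}}^2=\sum_{i=1}^n\abs{d_i}^2\abs{x_i}^2\leq\rb{\max{1\leq i\leq n}{\abs{d_i}}}^2\norm{\vec{x}}^2$, so $\norm{\mat{D}}\leq\max{1\leq i\leq n}{\abs{d_i}}$, and evaluating $\mat{D}$ on a standard basis vector $\vec{e}_j$ with $\abs{d_j}$ maximal shows the bound is attained, giving $\norm{\mat{D}}=\max{1\leq i\leq n}{\abs{d_i}}$. Applying this with $d_i=\exp{\pm t\lambda_i}$, and using $\abs{\exp{z}}=\exp{\mathrm{Re}\,z}$ with $\mathrm{Re}\,\lambda_i=\mu_i$, yields $\norm{\exp{t\matLambda}}=\max{1\leq i\leq n}{\exp{t\mu_i}}$ and $\norm{\exp{-t\matLambda}}=\max{1\leq i\leq n}{\exp{-t\mu_i}}$.

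Finally I would use $t\geq0$: since $\mu\mapsto\exp{t\mu}$ is nondecreasing, the first maximum is attained at the largest $\mu_i$, namely $\mu_1$ by the ordering \eqref{eq003}, so $\norm{\exp{t\matLambda}}=\exp{\mu_1t}$; since $\mu\mapsto\exp{-t\mu}$ is nonincreasing, the second maximum is attained at the smallest $\mu_i$, namely $\mu_n$, so $\norm{\exp{-t\matLambda}}=\exp{-\mu_nt}$, which is $\exp{-\kappa\mu_1t}$ by $\kappa=\mu_n/\mu_1$. I do not anticipate any real obstacle here; the only point deserving a word of care is that the $\lambda_i$ need not be real, so $\exp{\pm t\matLambda}$ is in general a complex matrix — but the diagonal-matrix norm identity and $\abs{\exp{z}}=\exp{\mathrm{Re}\,z}$ hold verbatim in that setting, so the argument is unaffected.
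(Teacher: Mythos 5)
Your argument is correct and is exactly the standard computation the authors have in mind (the paper omits the proof as straightforward): diagonality gives $\exp{\pm t\matLambda}=\diag{\exp{\pm t\lambda_1},\ldots,\exp{\pm t\lambda_n}}$, the induced norm of a diagonal matrix is the maximal modulus of its entries, and $\abs{\exp{\pm t\lambda_i}}=\exp{\pm t\mu_i}$ together with the ordering \eqref{eq003} and $t\geq 0$ yields the two identities, with $\exp{-\mu_nt}=\exp{-\kappa\mu_1t}$ by the definition of $\kappa$. Your remark about the eigenvalues possibly being complex, so that the norm is taken as the operator norm induced by the Hermitian norm, is the right point of care and handled correctly.
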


\begin{proof}
The proof is straight-forward and omitted.
\end{proof}

\begin{lem}
Consider the matrix $\mat{A}$. For every ${\sigma>0}$, there exist ${k_1,k_2>0}$ such that
\begin{equation} \label{eq005}
    \norm{ \exp{t\mat{A}} } \leq k_1 \, \exp{ \rb{ \mu_1 + \sigma } t }
    \mymbox{and}
    \norm{  \exp{-t\mat{A}} } \leq k_2 \, \exp{\rb{-\mu_n+\sigma}t} = k_2 \, \exp{\rb{-\kappa\mu_1+\sigma}t} \mymbox{for all} t \geq 0.
\end{equation}
Moreover, if $\mat{A}$ is diagonalizable, then we may take ${\sigma=0}$ in \eqref{eq005}.
\end{lem}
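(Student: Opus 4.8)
The plan is to reduce the matrix $\mat{A}$ to (real) Jordan form and estimate the norm of the exponential of each Jordan block directly. First I would write $\mat{A} = \mat{P}\mat{J}\mat{P}^{-1}$ where $\mat{J}$ is the Jordan canonical form, so that $\exp{t\mat{A}} = \mat{P}\exp{t\mat{J}}\mat{P}^{-1}$ and hence $\norm{\exp{t\mat{A}}} \leq \norm{\mat{P}}\,\norm{\mat{P}^{-1}}\,\norm{\exp{t\mat{J}}}$; the constant $\norm{\mat{P}}\norm{\mat{P}^{-1}}$ is harmless and can be absorbed into $k_1$. Since $\exp{t\mat{J}}$ is block-diagonal, it suffices to bound $\norm{\exp{t\mat{J}_\ell}}$ for a single Jordan block $\mat{J}_\ell = \lambda_i\mat{I} + \mat{N}$ of size $r$, where $\mat{N}$ is nilpotent. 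Then $\exp{t\mat{J}_\ell} = \exp{\lambda_i t}\sum_{q=0}^{r-1}\frac{t^q}{q!}\mat{N}^q$, so $\norm{\exp{t\mat{J}_\ell}} \leq \exp{\mu_i t}\sum_{q=0}^{r-1}\frac{t^q}{q!}\norm{\mat{N}}^q = \exp{\mu_i t}\,p(t)$ for a fixed polynomial $p$ with nonnegative coefficients.

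Next I would handle the polynomial factor. Fix $\sigma > 0$. Since $\mu_i \leq \mu_1 < 0$, we have $\mu_i + \sigma > \mu_1 + \sigma$ is not what we want; rather, the point is that for any $\sigma > 0$ the function $t \mapsto p(t)\exp{(\mu_i - \mu_1 - \sigma)t}$ is continuous on $[0,\infty)$ and tends to $0$ as $t\to\infty$ (because $\mu_i - \mu_1 - \sigma < 0$ and polynomials are dominated by decaying exponentials), hence is bounded by some constant $c_\ell$. Therefore $\norm{\exp{t\mat{J}_\ell}} \leq c_\ell\exp{(\mu_1+\sigma)t}$ for all $t\geq 0$. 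Taking the maximum over the finitely many blocks and multiplying by $\norm{\mat{P}}\norm{\mat{P}^{-1}}$ yields the first estimate in \eqref{eq005} with a suitable $k_1$. The bound for $\exp{-t\mat{A}}$ is obtained by the same argument applied to $-\mat{A}$, whose eigenvalues are $-\lambda_i$ with real parts $-\mu_i$, the largest of which is $-\mu_n = -\kappa\mu_1$; this gives $\norm{\exp{-t\mat{A}}} \leq k_2\exp{(-\mu_n+\sigma)t}$, and the identity $-\mu_n = -\kappa\mu_1$ is just the definition of $\kappa$.

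For the final sentence, if $\mat{A}$ is diagonalizable then every Jordan block has size $r = 1$, so the polynomial $p(t)$ is the constant $1$ and no decaying-exponential cushion is needed: $\norm{\exp{t\mat{J}_\ell}} = \exp{\mu_i t} \leq \exp{\mu_1 t}$ directly, and likewise $\norm{\exp{-t\mat{J}_\ell}} = \exp{-\mu_i t} \leq \exp{-\mu_n t}$. Thus \eqref{eq005} holds with $\sigma = 0$ and $k_1 = k_2 = \norm{\mat{P}}\norm{\mat{P}^{-1}}$. I do not expect any serious obstacle here; the only mildly delicate point is being careful that the constants $k_1, k_2$ depend on $\sigma$ (they blow up as $\sigma \to 0^+$ in the non-diagonalizable case, since the polynomial factor must be absorbed), and making sure the real Jordan form is used so that everything stays over $\mathbb{R}$ — though since we only ever estimate norms, one could equally well work over $\mathbb{C}$ and the bounds are unaffected.
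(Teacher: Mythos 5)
Your proof is correct and follows essentially the same route as the paper: the paper declares the $\sigma>0$ estimates ``standard'' (they are exactly the Jordan-form argument you spell out, absorbing the polynomial factor into $\exp{\sigma t}$) and, for the diagonalizable case, uses the conjugation $\exp{t\mat{A}}=\mat{P}\exp{t\matLambda}\mat{P}^{-1}$ together with Claim~\ref{claim001}, just as you do. The only blemish is the garbled aside ``$\mu_i+\sigma>\mu_1+\sigma$ is not what we want'' (in fact $\mu_i\leq\mu_1$), but your actual estimate $p(t)\,\exp{\rb{\mu_i-\mu_1-\sigma}t}\leq c_\ell$ is the right one and the argument stands.
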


\begin{proof}
The proofs of the estimates when ${\sigma>0}$ are standard. Assume $\mat{A}$ is diagonalizable. That is, there is an invertible matrix $\mat{P}$ such that
${\mat{A}=\mat{P}\matLambda\mat{P}^{-1}}$, where ${\matLambda:=\diag{\lambda_1,\ldots,\lambda_n}}$. Thus, ${\exp{t\mat{A}}=\mat{P}\exp{t\matLambda}\mat{P}^{-1}}$. The
estimates when ${\sigma=0}$ then follow from Claim~\ref{claim001}.
\end{proof}

\begin{thm}
Consider the flow $\flowB{t}{\vec{x}_0}$ for \eqref{eq001}. For any ${\sigma\in\rb{0,-\mu_1}}$, there exist ${\delta,k>0}$
such that
\begin{equation} \label{eq006}
    \norm{ \flowB{t}{\vec{x}_0} } \leq k \, \exp{\rb{\mu_1+\sigma}t} \norm{\vec{x}_0}
    \mymbox{for all}
    \rb{t,\vec{x}_0} \in \Omega_\delta.
\end{equation}
\end{thm}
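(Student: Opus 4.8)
The plan is to combine the variation-of-parameters form \eqref{eq002} with the exponential bound \eqref{eq005} on $\exp{t\mat{A}}$, the growth bound \eqref{eq004} on $\vec{b}$, and a Gronwall-type argument to close the estimate. First I would fix $\sigma \in \rb{0,-\mu_1}$ and apply the lemma to obtain $k_1 > 0$ with $\norm{\exp{t\mat{A}}} \leq k_1 \exp{\rb{\mu_1+\sigma}t}$ for all $t \geq 0$. The key quantity to control is $u(t) := \exp{-\rb{\mu_1+\sigma}t}\norm{\flowB{t}{\vec{x}_0}}$; I want to show $u(t)$ stays bounded by a constant multiple of $\norm{\vec{x}_0}$ provided $\norm{\vec{x}_0}$ is small enough (this smallness, together with a continuity/local-existence argument, is also what keeps the solution inside $\ballO{\delta}$ so that \eqref{eq004} applies).

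The main steps, in order: (1) Take norms in \eqref{eq002} and use \eqref{eq005} and the first inequality in \eqref{eq004} to get
\[
    \norm{\flowB{t}{\vec{x}_0}} \leq k_1 \exp{\rb{\mu_1+\sigma}t}\norm{\vec{x}_0} + \integral{0}{t}{k_1 \exp{\rb{\mu_1+\sigma}\rb{t-s}} k_1' \norm{\flowB{s}{\vec{x}_0}}^\alpha}{s}
\]
for as long as the solution remains in $\ballO{\delta}$. (2) Multiply through by $\exp{-\rb{\mu_1+\sigma}t}$ to obtain, with $c := k_1 k_1'$,
\[
    u(t) \leq k_1 \norm{\vec{x}_0} + c \integral{0}{t}{\exp{\rb{\alpha-1}\rb{\mu_1+\sigma}s} u(s)^\alpha}{s}.
\]
Since $\alpha > 1$ and $\mu_1+\sigma < 0$, the weight $\exp{\rb{\alpha-1}\rb{\mu_1+\sigma}s}$ is integrable on $\sqb{0,\infty}$ with integral $\fracslash{-1}{\rb{\alpha-1}\rb{\mu_1+\sigma}}$; this decay is exactly what makes the nonlinear term a small perturbation. (3) Run a continuity (bootstrap) argument: assuming $u(s) \leq M \norm{\vec{x}_0}$ on a maximal subinterval, the integral is bounded by $c M^\alpha \norm{\vec{x}_0}^\alpha \cdot \fracslash{1}{\abs{\rb{\alpha-1}\rb{\mu_1+\sigma}}}$, so $u(t) \leq k_1 \norm{\vec{x}_0} + C' M^\alpha \norm{\vec{x}_0}^\alpha$; choosing $M := 2k_1$ and then $\delta$ small enough that $C' \rb{2k_1}^\alpha \norm{\vec{x}_0}^{\alpha-1} \leq k_1$ for $\norm{\vec{x}_0} < \delta$ gives $u(t) \leq 2k_1 \norm{\vec{x}_0} = M\norm{\vec{x}_0}$, a strict improvement, so the bootstrap interval is all of $\sqb{0,\infty)$ and the solution never leaves $\ballO{\delta}$ (shrinking $\delta$ once more so that $M\delta \leq \delta_0$, the radius on which \eqref{eq004} holds). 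Setting $k := 2k_1$ yields \eqref{eq006}.

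The main obstacle is the bootstrap/a-priori step: one must simultaneously show the solution does not escape the ball where \eqref{eq004} is valid \emph{and} obtain the exponential bound, since the estimate \eqref{eq004} is only assumed locally. The standard fix is the continuity argument just sketched — define $T^* := \sup{}{T : \norm{\flowB{t}{\vec{x}_0}} < \delta_0 \text{ and } u(t) \leq M\norm{\vec{x}_0} \text{ on } \sqb{0,T}}$ and show $T^* = \infty$ by deriving a strict inequality on $\sqb{0,T^*}$, which contradicts maximality if $T^*$ were finite. A secondary point worth stating cleanly is that $\alpha > 1$ is essential: it is what turns $\norm{\vec{x}_0}^\alpha$ into a higher-order term absorbable by the linear term for small data, and what makes the integral weight decay.
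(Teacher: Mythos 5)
Your proposal is correct, and it reaches \eqref{eq006} by a genuinely different closing step than the paper. The setup is identical: both take norms in \eqref{eq002}, use \eqref{eq004} and \eqref{eq005}, and work with ${u(t)=\exp{-\rb{\mu_1+\sigma}t}\norm{\flowB{t}{\vec{x}_0}}}$. Where you diverge is in how the nonlinear integral inequality is closed. The paper follows a Bihari-type route: it majorizes $u$ by the right-hand side $v(t)$, observes that $v$ satisfies a Bernoulli-type differential inequality ${\dot{v}/v^\alpha \leq k_2\,\exp{\rb{\alpha-1}\rb{\mu_1+\sigma}t}}$, and integrates it explicitly, which yields explicit formulas for $k$ and $\delta$ (with a free parameter $\rho\in\rb{0,1}$ that is then tuned so ${k\delta<\delta_1}$, settling the a priori confinement in the ball where \eqref{eq004} holds). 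You instead run a continuity/bootstrap argument, absorbing the nonlinear term ${C'M^\alpha\norm{\vec{x}_0}^{\alpha-1}}$ into the linear one for small data; confinement in $\ballO{\delta_0}$ is built directly into the bootstrap set. Both are valid: your argument is more elementary and robust (it avoids integrating the nonlinear inequality and generalizes easily to other integrable weights), while the paper's computation buys explicit constants, in particular that $k$ can be taken arbitrarily close to the linear constant $k_1$ by shrinking $\rho$. One small point to state carefully in your write-up: to conclude ${T^*=\infty}$ the improvement on the bootstrap interval must be strict, which indeed follows because ${\norm{\vec{x}_0}<\delta}$ is strict (and trivially if ${\vec{x}_0=\vec{0}}$), so the inequality ${C'M^\alpha\norm{\vec{x}_0}^{\alpha-1}<k_1}$ and the bound ${M\norm{\vec{x}_0}<\delta_0}$ hold strictly; as written, your displayed choice ``${C'\rb{2k_1}^\alpha\norm{\vec{x}_0}^{\alpha-1}\leq k_1}$'' should be read with this strictness in mind.
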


\begin{proof}
We will assume that ${\norm{\vec{x}_0}>0}$ for if this were not the
case the result would be trivial. This proof is more or less
standard but we are careful to include the initial condition in the
estimate which is not standard. See, for example, pages~314 and 315
of \cite{CoddingtonLevinson}. In \cite{CoddingtonLevinson}, they
used Gronwall's Inequality at a key step (they had a slight
difference in assumptions) but we will instead use a modification.
Such modifications are examples of Bihari's Inequality.  See, for
example, \cite{Bihari}.

It follows from the integral equation \eqref{eq002} and the estimates \eqref{eq004} and \eqref{eq005} that
\begin{equation} \label{eq007}
    \norm{ \flowB{t}{\vec{x}_0} }
    \leq k_1 \exp{ \rb{\mu_1+\sigma} t } \norm{ \vec{x}_0 } + k_2 \integral{0}{t}{ \exp{ \rb{\mu_1+\sigma} \rb{t-s} } \norm{ \flowB{s}{\vec{x}_0} }^{\alpha} }{s}
\end{equation}
for some ${\delta_1,k_1,k_2>0}$, provided ${\norm{\flowB{t}{\vec{x}_0}}<\delta_1}$ for all ${t \geq 0}$. Later, we will show this condition is satisfied for
$\norm{\vec{x}_0}$ sufficiently small. Manipulate \eqref{eq007} to obtain ${u(t) \leq v(t)}$, where
\[
    u(t) := \exp{ -\rb{\mu_1+\sigma} t } \norm{ \flowB{t}{\vec{x}_0} } > 0
    \mymbox{and}
    v(t) := k_1 \norm{ \vec{x}_0 } + k_2 \integral{0}{t}{ \exp{ \rb{\alpha-1} \rb{\mu_1+\sigma} s } u(s)^{\alpha} }{s} > 0.
\]
Now, ${\dot{v}(t)=k_2\,\exp{\rb{\alpha-1}\rb{\mu_1+\sigma}t}u(t)^{\alpha}>0}$. Since ${u(t) \leq v(t)}$ and ${v(t)>0}$, we have
\begin{equation} \label{eq008}
    \frac{ \dot{v}(s) }{ v(s)^{\alpha} } \leq k_2 \, \exp{ \rb{\alpha-1} \rb{\mu_1+\sigma} s },
    \quad
    v(0) = k_1\norm{\vec{x}_0}
    \mymbox{for}
    s \geq 0.
\end{equation}
If we integrate \eqref{eq008} with respect to $s$ from $0$ to $t$ and recall that
${\alpha>1}$ and ${\mu_1+\sigma<0}$, we obtain
\[
    \frac{1}{1-\alpha} \sqb{ \frac{1}{v(t)^{\alpha-1}} - \frac{1}{ k_1^{\alpha-1} \norm{ \vec{x}_0 }^{\alpha-1} } }
    \leq \sqb{ \frac{ k_2 }{ \rb{\alpha-1} \rb{\mu_1+\sigma} } } \sqb{ \exp{ \rb{\alpha-1} \rb{\mu_1+\sigma} t } - 1 }
    \leq -\frac{ k_2 }{ \rb{\alpha-1} \rb{\mu_1+\sigma} }.
\]
Rearranging,
\begin{equation} \label{eq009}
    \frac{1}{v(t)^{\alpha-1}} \geq \frac{1}{\norm{\vec{x}_0}^{\alpha-1}} \rb{ \frac{1}{ k_1^{\alpha-1} } + \frac{k_2 \norm{\vec{x}_0}^{\alpha-1}}{\mu_1+\sigma} }.
\end{equation}
Now, we want the expression in the brackets to be strictly positive,
which yields the condition
\[
    \norm{\vec{x}_0} < \frac{1}{k_1} \rb{ -\frac{ \mu_1 + \sigma }{ k_2 } }^{\frac{1}{\alpha-1}}.
\]
Take
\[
    \delta := \frac{\rho}{k_1} \rb{ -\frac{ \mu_1 + \sigma }{ k_2 } }^{\frac{1}{\alpha-1}}
    \mymbox{and}
    k
    := \rb{ \frac{1}{ k_1^{\alpha-1} } + \frac{k_2 \delta^{\alpha-1}}{\mu_1+\sigma} }^{\frac{1}{1-\alpha}}
    = k_1 \rb{ 1 - \rho^{\alpha-1} }^{\frac{1}{1-\alpha}}
    > 0,
\]
where ${\rho\in\rb{0,1}}$ is arbitrary, and assume
${\norm{\vec{x}_0}<\delta}$. Note $k$ is independent of $\norm{\vec{x}_0}$. From \eqref{eq009},
\[
    \frac{1}{v(t)^{\alpha-1}} \geq \frac{1}{ \norm{\vec{x}_0}^{\alpha-1} k^{\alpha-1} }.
\]
Rearranging and recalling that ${u(t) \leq v(t)}$, we have ${u(t)
\leq k \norm{\vec{x}_0}}$. By definition of $u(t)$, we have
${\norm{\flowB{t}{\vec{x}_0} } \leq
k\,\exp{\rb{\mu_1+\sigma}t}\norm{\vec{x}_0}}$ for all ${t \geq 0}$.

All that remains is to show that the condition ${\norm{\flowB{t}{\vec{x}_0}}<\delta_1}$ for all ${t \geq 0}$ is satisfied for a
particular choice of $\rho$. Using our expressions for $k$ and $\delta$, we see that we can choose ${\rho\in(0,1)}$
sufficiently close to zero to ensure ${k\delta<\delta_1}$. Hence, if ${\rb{t,\vec{x}_0}\in\Omega_\delta}$ then ${\norm{\flowB{t}{\vec{x}_0}} \leq k\,\exp{\rb{\mu_1+\sigma}t}\norm{\vec{x}_0}<k\delta<\delta_1}$.
\end{proof}

\begin{rem}
A glance at the proof of the previous theorem shows that $\sigma$ is
only necessary in the basic decay rate of $\flowB{t}{\vec{x}_0}$ if
$\sigma$ is necessary in the decay rate of $\exp{t\mat{A}}$. Indeed,
if $\mat{A}$ is diagonalizable then we can take ${\sigma=0}$ in
\eqref{eq006}.
\end{rem}

\section{Closely-Spaced Eigenvalues} \label{sec002}

Consider once again the system \eqref{eq001}.  In this section, we
will assume that the eigenvalues $\seq{\lambda}{i}{1}{n}$ of
$\mat{A}$ are closely-spaced relative to the nonlinear part
$\vecf{b}{\vec{x}}$. That is, in addition to \eqref{eq003} and
\eqref{eq004} holding, we are assuming
\begin{equation} \label{eq010}
    \kappa < \alpha.
\end{equation}

\begin{rem}
Throughout this section, if $\mat{A}$ is diagonalizable we can take
${\sigma=0}$.
\end{rem}

\begin{rem}
Consider the scalar case ${n=1}$ with ${\dot{x}=ax+\scf{b}{x}}$,
where ${a<0}$, so that ${\mat{A}=\rb{a}}$ and ${\kappa=1<\alpha}$.
Trivially, the results of this section apply and we can take
${\sigma=0}$ for all estimates.
\end{rem}

\subsection{The Transformation}

\begin{claim}
For any ${\sigma>0}$ there are ${\delta,k>0}$ such that
\begin{equation} \label{eq011}
    \norm{ \exp{-t\mat{A}} \vecf{b}{\flowB{t}{\vec{x}_0}} } \leq k \, \exp{ \sqb{ \rb{ \alpha - \kappa } \mu_1 + \sigma } t } \norm{ \vec{x}_0 }^{\alpha}
    \mymbox{for all}
    \rb{t,\vec{x}_0} \in \Omega_\delta.
\end{equation}
\end{claim}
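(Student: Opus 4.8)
The plan is to obtain \eqref{eq011} by chaining together three estimates already established: the operator-norm bound $\norm{\exp{-t\mat{A}}} \leq k\,\exp{\rb{-\kappa\mu_1+\sigma_1}t}$ from \eqref{eq005}, the growth condition $\norm{\vecf{b}{\vec{x}}} \leq k\,\norm{\vec{x}}^\alpha$ from \eqref{eq004}, and the decay estimate $\norm{\flowB{t}{\vec{x}_0}} \leq k\,\exp{\rb{\mu_1+\sigma_2}t}\norm{\vec{x}_0}$ from \eqref{eq006}. Since the right-hand side of \eqref{eq011} only increases with $\sigma$, it suffices to prove the claim for small $\sigma$; so I would assume at the outset that $\sigma$ is small enough that $\sigma_2 := \sfrac{\sigma}{2\alpha}$ lies in $\rb{0,-\mu_1}$ (so that \eqref{eq006} applies with this $\sigma_2$), and set $\sigma_1 := \sfrac{\sigma}{2}$, so that $\sigma_1+\alpha\sigma_2=\sigma$.

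Next I would fix $\delta>0$ in the correct order. First choose $\delta$ small enough that \eqref{eq006} holds on $\Omega_\delta$ with some constant; then shrink $\delta$ further, if necessary, so that this constant times $\delta$ is smaller than the radius of the ball on which \eqref{eq004} is valid. With this choice, for $\rb{t,\vec{x}_0}\in\Omega_\delta$ we have $\norm{\flowB{t}{\vec{x}_0}} \leq k_3\,\exp{\rb{\mu_1+\sigma_2}t}\norm{\vec{x}_0} \leq k_3\norm{\vec{x}_0} < k_3\delta$, so the orbit remains inside the domain where \eqref{eq004} applies for every $t\geq 0$. This is the one point that requires a moment's thought, and it is the reason $\delta$ must be selected after the constant coming from \eqref{eq006}.

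The conclusion is then a direct computation: by submultiplicativity of the operator norm,
\begin{align*}
    \norm{ \exp{-t\mat{A}} \vecf{b}{\flowB{t}{\vec{x}_0}} }
    &\leq \norm{\exp{-t\mat{A}}} \cdot \norm{\vecf{b}{\flowB{t}{\vec{x}_0}}}
    \leq k_1\,\exp{\rb{-\kappa\mu_1+\sigma_1}t}\cdot k_2\,\norm{\flowB{t}{\vec{x}_0}}^\alpha \\
    &\leq k_1 k_2 k_3^\alpha\,\exp{\sqb{\rb{\alpha-\kappa}\mu_1 + \sigma_1 + \alpha\sigma_2}t}\norm{\vec{x}_0}^\alpha,
\end{align*}
and since $\sigma_1+\alpha\sigma_2=\sigma$, taking $k := k_1 k_2 k_3^\alpha$ yields \eqref{eq011}. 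No genuine obstacle is expected: the argument is pure bookkeeping, the only subtleties being the splitting of $\sigma$ between the two exponential estimates and the order in which $\delta$ and the auxiliary constants are chosen.
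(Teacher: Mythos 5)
Your argument is correct and is exactly the paper's proof, which simply cites the combination of \eqref{eq004}, \eqref{eq005}, and \eqref{eq006}; your splitting of $\sigma$ and the careful ordering of the choice of $\delta$ are just the bookkeeping the paper leaves implicit.
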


\begin{proof}
It follows from \eqref{eq004}, \eqref{eq005}, and \eqref{eq006}.
\end{proof}

Recall the integral equation \eqref{eq002}. When $\norm{\vec{x}_0}$
is sufficiently small, \eqref{eq010} and \eqref{eq011} allow us to
write
\begin{equation} \label{eq012}
    \flowB{t}{\vec{x}_0}
    = \exp{t\mat{A}} \sqb{ \vec{x}_0 + \integral{0}{\infty}{ \exp{-s\mat{A}} \vecf{b}{ \flowB{s}{\vec{x}_0} } }{s} } - \integral{t}{\infty}{ \exp{\rb{t-s}\mat{A}} \vecf{b}{ \flowB{s}{\vec{x}_0} } }{s}.
\end{equation}
See, for example, the proof of Theorem~4.1 (Stable Manifold Theorem)
of Chapter~13 of \cite{CoddingtonLevinson} which involves the trick
of ``flipping an integral'' in an integral equation. Now, there is a
${\delta>0}$ (the one in \eqref{eq011} will work) such that we can
define the function
\begin{equation} \label{eq013}
    \vecpsif{\vec{x}_0} := \vec{x}_0 + \integral{0}{\infty}{ \exp{-s\mat{A}} \vecf{b}{\flowB{s}{\vec{x}_0}} }{s},
    \quad
    \vec{x}_0 \in \ballO{\delta}.
\end{equation}
Using \eqref{eq012}, we have thus proven the following.

\begin{claim}
There is a ${\delta>0}$ such that
\begin{equation} \label{eq014}
    \flowB{t}{\vec{x}_0} = \exp{t\mat{A}} \vecpsif{\vec{x}_0} - \integral{t}{\infty}{ \exp{\rb{t-s}\mat{A}} \vecf{b}{\flowB{s}{\vec{x}_0}} }{s}
    \mymbox{for all}
    \rb{t,\vec{x}_0} \in \Omega_\delta.
\end{equation}
\end{claim}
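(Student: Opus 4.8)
The plan is to obtain \eqref{eq014} by rewriting the integral equation \eqref{eq002}; the statement is essentially a repackaging of the identity \eqref{eq012} with the shorthand \eqref{eq013}, so the substance lies in justifying \eqref{eq012} itself. First I would start from \eqref{eq002} evaluated along the orbit, $\flowB{t}{\vec{x}_0} = \exp{t\mat{A}}\vec{x}_0 + \integral{0}{t}{\exp{\rb{t-s}\mat{A}}\vecf{b}{\flowB{s}{\vec{x}_0}}}{s}$, and factor $\exp{t\mat{A}}$ out of the right-hand side, giving $\flowB{t}{\vec{x}_0} = \exp{t\mat{A}}\sqb{\vec{x}_0 + \integral{0}{t}{\exp{-s\mat{A}}\vecf{b}{\flowB{s}{\vec{x}_0}}}{s}}$.

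The one genuine step is the ``flip the integral'' trick: replace $\integral{0}{t}{}$ by $\integral{0}{\infty}{} - \integral{t}{\infty}{}$. This is legitimate precisely because the improper integral $\integral{0}{\infty}{\exp{-s\mat{A}}\vecf{b}{\flowB{s}{\vec{x}_0}}}{s}$ converges, and this is where the closely-spaced hypothesis \eqref{eq010} enters. Indeed, the preceding Claim \eqref{eq011} bounds the integrand by $k\,\exp{\sqb{\rb{\alpha-\kappa}\mu_1+\sigma}s}\norm{\vec{x}_0}^\alpha$; since $\alpha>\kappa$ and $\mu_1<0$, I would fix $\sigma>0$ small enough that $\rb{\alpha-\kappa}\mu_1+\sigma<0$ (and also $\sigma<-\mu_1$, so that \eqref{eq005}--\eqref{eq006} apply), making the integrand exponentially decaying on $\Omega_\delta$ for the $\delta$ of \eqref{eq011}. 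By the discussion of improper integrals in the Background, the integral then converges uniformly on $\Omega_\delta$, so the splitting is valid and $\vecpsif{\vec{x}_0}$ as defined in \eqref{eq013} is well defined on $\ballO{\delta}$. Substituting the splitting back, pulling $\exp{t\mat{A}}$ into the tail integral via $\exp{t\mat{A}}\exp{-s\mat{A}}=\exp{\rb{t-s}\mat{A}}$, and recognising the first bracket as $\vecpsif{\vec{x}_0}$ yields \eqref{eq014} on all of $\Omega_\delta$.

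The main obstacle — really the only one — is the bookkeeping of the constants: one needs a single $\delta>0$ small enough for the a priori decay estimate \eqref{eq006} to hold, small enough for \eqref{eq011} to hold, and, implicit in both, small enough that the orbit $\flowB{s}{\vec{x}_0}$ remains for every $s\geq0$ inside the ball on which $\vec{b}$ is $\Cr{1}{}$ and \eqref{eq004} is valid (so that the right-hand side of \eqref{eq002} even makes sense). Once $\sigma$ is chosen small and $\delta$ shrunk accordingly, everything else — factoring out $\exp{t\mat{A}}$, splitting and recombining the integrals, relabelling with $\vecpsi$ — is purely algebraic, with no delicate estimate beyond the convergence of the improper integral, which \eqref{eq010}--\eqref{eq011} already deliver.
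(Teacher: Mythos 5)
Your proposal is correct and follows essentially the same route as the paper: it obtains \eqref{eq014} by factoring $\exp{t\mat{A}}$ out of \eqref{eq002} and ``flipping the integral,'' with convergence of the tail integral justified by \eqref{eq010} and \eqref{eq011} (choosing $\sigma$ small so the exponent $\rb{\alpha-\kappa}\mu_1+\sigma$ is negative), then recognising the bracket as $\vecpsif{\vec{x}_0}$ from \eqref{eq013}. The paper treats this exactly as you do, deriving \eqref{eq012} first and noting the claim is then immediate.
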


\begin{claim}
There are ${\delta,k>0}$ such that
${\vecpsi\in\Cr{}{\ballO{\delta},\Rn}}$ and
\begin{equation} \label{eq015}
    \norm{ \vecpsif{\vec{x}_0} - \vec{x}_0 } \leq k \norm{ \vec{x}_0 }^{\alpha}
    \mymbox{for all}
    \vec{x}_0 \in \ballO{\delta}.
\end{equation}
Moreover, $\vecpsi$ is a near-identity transformation.
\end{claim}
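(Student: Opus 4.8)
The plan is to read off an integral representation for $\vecpsif{\vec{x}_0} - \vec{x}_0$ from the definition \eqref{eq013}, estimate it with the exponential bound \eqref{eq011}, and get the continuity from the Weierstrass $M$-test discussion in the Background subsection. Almost all of the work has in fact already been done in the preceding claims; what remains is bookkeeping.

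First, directly from \eqref{eq013}, $\vecpsif{\vec{x}_0} - \vec{x}_0 = \integral{0}{\infty}{\exp{-s\mat{A}}\vecf{b}{\flowB{s}{\vec{x}_0}}}{s}$, so the statement reduces to controlling this improper integral. The key is the closely-spaced hypothesis \eqref{eq010}: since $\kappa < \alpha$ and $\mu_1 < 0$, we have $\rb{\alpha - \kappa}\mu_1 < 0$, so I may fix $\sigma > 0$ small enough that $\rho := -\sqb{\rb{\alpha - \kappa}\mu_1 + \sigma} > 0$. With this choice of $\sigma$, Claim \eqref{eq011} supplies $\delta, k > 0$ with $\norm{\exp{-s\mat{A}}\vecf{b}{\flowB{s}{\vec{x}_0}}} \leq k\,\exp{-\rho s}\norm{\vec{x}_0}^\alpha$ for all $\rb{s, \vec{x}_0} \in \Omega_\delta$.

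Then I would simply integrate this bound: $\norm{\vecpsif{\vec{x}_0} - \vec{x}_0} \leq \integral{0}{\infty}{\norm{\exp{-s\mat{A}}\vecf{b}{\flowB{s}{\vec{x}_0}}}}{s} \leq k\,\norm{\vec{x}_0}^\alpha\integral{0}{\infty}{\exp{-\rho s}}{s} = \rb{k/\rho}\norm{\vec{x}_0}^\alpha$, and relabel $k/\rho$ as $k$ to obtain \eqref{eq015}. For the regularity $\vecpsi \in \Cr{}{\ballO{\delta}, \Rn}$, note that the integrand $\rb{s, \vec{x}_0} \mapsto \exp{-s\mat{A}}\vecf{b}{\flowB{s}{\vec{x}_0}}$ is continuous on $\Omega_\delta$ — here one uses continuous dependence of the flow on initial data together with continuity of $\vec{b}$, and \eqref{eq006} to guarantee, after shrinking $\delta$ if necessary, that $\flowB{s}{\vec{x}_0}$ remains inside the ball on which $\vec{b}$ is defined — and that it is dominated by the integrable, $\vec{x}_0$-independent bound $k\,\delta^\alpha\exp{-\rho s}$. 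The $M$-test argument recorded in the Background subsection then shows the integral converges uniformly on $\Omega_\delta$ and defines a continuous function of $\rb{t, \vec{x}_0}$; evaluating at $t = 0$ gives $\vecpsi \in \Cr{}{\ballO{\delta}, \Rn}$. Finally, the near-identity assertion is immediate from $\alpha > 1$: \eqref{eq015} says $\vecpsif{\vec{x}_0} = \vec{x}_0 + \bigO{\norm{\vec{x}_0}^\alpha} = \vec{x}_0 + \litO{\norm{\vec{x}_0}}$ as $\norm{\vec{x}_0} \to 0$, and in particular $\vecpsif{\vec{0}} = \vec{0}$.

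I do not expect a genuine obstacle here; the only points requiring care are the choice of $\sigma$ — this is exactly where the closely-spaced condition $\kappa < \alpha$ is used, to make the exponent $-\rho$ negative so that the improper integral converges — the simultaneous shrinking of $\delta$ so that every cited estimate (\eqref{eq006} and \eqref{eq011}) applies at once, and invoking the $M$-test cleanly for continuity rather than reproving it.
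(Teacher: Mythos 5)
Your proposal is correct and follows essentially the same route as the paper, whose (very terse) proof cites exactly the ingredients you use: the estimate \eqref{eq011} with $\sigma$ chosen small enough that the exponent $\rb{\alpha-\kappa}\mu_1+\sigma$ is negative (this is where $\kappa<\alpha$ enters), the definition \eqref{eq013}, continuity of the integrand together with the Weierstrass $M$-test argument from the Background subsection, and ${\alpha>1}$ for the near-identity conclusion. Your write-up is simply a fully detailed version of that argument, with the bookkeeping on $\delta$ and the dominating bound made explicit.
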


\begin{proof}
It follows from \eqref{eq011}, \eqref{eq013}, the continuity of the
integrand, and the fact that ${\alpha>1}$.
\end{proof}

\begin{rem}
The near-identity transformation $\vecpsi$ has many interesting
properties which we will cover in a later paper. For example,
$\vecpsif{\vec{x}_0}$ is the optimal (as ${t\to\infty}$) initial
condition for the linearized system
${\dot{\vec{y}}=\mat{A}\vec{y}}$. Moreover, $\vecpsi$ satisfies the
conjugacy condition
${\vecpsif{\flowB{t}{\vec{x}_0}}=\exp{t\mat{A}}\vecpsif{\vec{x}_0}}$
and, under modest assumptions, is as smooth as the nonlinear part
$\vec{b}$.
\end{rem}

\subsection{Iterates}

\subsubsection{Definition}

Taking inspiration from the integral equation \eqref{eq014}, we will
construct iterates $\setb{\iterateD{m}{t}{\vec{y}_0}}_{m=1}^\infty$
to approximate the flow $\flowB{t}{\vec{x}_0}$.  Assuming
$\norm{\vec{y}_0}$ is sufficiently small, take as the first iterate
\begin{subequations} \label{eq016}
\begin{equation} \label{eq016a}
    \iterateD{1}{t}{\vec{y}_0} := \exp{t\mat{A}} \vec{y}_0
\end{equation}
and define the remainder recursively via
\begin{equation} \label{eq016b}
    \iterateD{m+1}{t}{\vec{y}_0} := \exp{t\mat{A}} \vec{y}_0 - \integral{t}{\infty}{ \exp{\rb{t-s}\mat{A}} \vecf{b}{\iterateD{m}{s}{\vec{y}_0}} }{s}
    \quad \rb{m\in\Nd{}}.
\end{equation}
\end{subequations}
To connect the iterates with the flow, we take
${\vec{y}_0:=\vecpsif{\vec{x}_0}}$, where $\norm{\vec{x}_0}$ is
sufficiently small. With this choice of $\vec{y}_0$, the first
iterate is the best linear approximation (as ${t\to\infty}$) of
$\flowB{t}{\vec{x}_0}$.

\begin{rem}
The computation of the iterates
$\setb{\iterateD{m}{t}{\vec{y}_0}}_{m=1}^\infty$ does not require us
to know $\vec{y}_0$, which can be treated as a parameter, in terms
of the initial condition $\vec{x}_0$. The resulting iterates, in
conjunction with Theorem~\ref{thm001} below, tell us the form of the
asymptotic expansion for the actual solution $\flowB{t}{\vec{x}_0}$.
However, it is possible to obtain an approximation of any desired
order---sometimes even the exact expression---for $\vec{y}_0$ in
terms of $\vec{x}_0$. See \S\ref{sec002.003.003}.
\end{rem}

\subsubsection{Existence, Decay Rate, and Closeness to the Flow of the Iterates}

\begin{prop} \label{prop001}
Consider the transformation defined by \eqref{eq013} and the
iterates defined by \eqref{eq016}. Let ${\sigma>0}$ be small enough
so that ${\rb{\alpha+1}\sigma<\rb{\kappa-\alpha}\mu_1}$. Then, there
is a ${\delta>0}$ (independent of $m$) such that if
${\norm{\vec{x}_0}<\delta}$ then
$\iterateD{m}{t}{\vecpsif{\vec{x}_0}}$ exists for each
${m\in\Nd{}}$. Moreover, there is a ${k>0}$ (independent of $m$)
such that
\begin{equation} \label{eq017}
    \norm{ \iterateD{m}{t}{\vecpsif{\vec{x}_0}} } \leq k \, \exp{ \rb{ \mu_1 + \sigma } t } \norm{ \vec{x}_0 }
    \mymbox{for all}
    \rb{t,\vec{x}_0} \in \Omega_\delta, ~ m \in \Nd{}.
\end{equation}
\end{prop}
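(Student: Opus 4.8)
\medskip
\noindent\emph{Proof plan.} The plan is to run one induction on $m$ that, at each stage, produces the existence and continuity of $\iterateD{m}{t}{\vecpsif{\vec{x}_0}}$ on $\Omega_\delta$ together with the estimate \eqref{eq017}, the whole point being that a single constant $k$ and a single $\delta$ serve for every $m$. First fix $\sigma$ as in the statement. Since $\kappa<\alpha$ and $\mu_1<0$ we have $(\alpha-\kappa)\mu_1<0$, and the hypothesis $(\alpha+1)\sigma<(\kappa-\alpha)\mu_1$ is precisely what makes
\[
    \gamma:=-\sqb{(\alpha-\kappa)\mu_1+(\alpha+1)\sigma}>0,
\]
and this $\gamma$ will govern the convergence of every improper integral that appears. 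Shrink $\delta$ so that \eqref{eq004}, \eqref{eq005}, \eqref{eq006}, \eqref{eq011}, and \eqref{eq015} all apply on the relevant balls. For the base case $m=1$ we have $\iterateD{1}{t}{\vecpsif{\vec{x}_0}}=\exp{t\mat{A}}\vecpsif{\vec{x}_0}$, so \eqref{eq005} and \eqref{eq015} give $\norm{\iterateD{1}{t}{\vecpsif{\vec{x}_0}}}\leq C_1\exp{\rb{\mu_1+\sigma}t}\norm{\vec{x}_0}$ on $\Omega_\delta$, where $C_1$ tends to a finite positive limit as $\delta\to0$; I take $k:=2C_1$ and claim \eqref{eq017} holds with this $k$ for all $m$.

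For the inductive step, suppose $\iterateD{m}{t}{\vecpsif{\vec{x}_0}}$ is defined and continuous for all $\rb{t,\vec{x}_0}\in\Omega_\delta$ and satisfies \eqref{eq017} with the constant $k$. Because $\mu_1+\sigma<0$ this already yields $\norm{\iterateD{m}{s}{\vecpsif{\vec{x}_0}}}\leq k\delta$ for every $s\geq0$, so after one more shrinking of $\delta$ (to keep $k\delta$ inside the ball on which \eqref{eq004} holds) the composition $\vecf{b}{\iterateD{m}{s}{\vecpsif{\vec{x}_0}}}$ is defined and has norm at most $k_1k^\alpha\exp{\alpha\rb{\mu_1+\sigma}s}\norm{\vec{x}_0}^\alpha$. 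Combined with $\norm{\exp{\rb{t-s}\mat{A}}}\leq k_2\exp{\rb{-\kappa\mu_1+\sigma}\rb{s-t}}$ from \eqref{eq005} (valid for $s\geq t$), the integrand in \eqref{eq016b} is, as a function of $s$, dominated by a constant multiple of $\exp{-\gamma s}$; hence the integral converges absolutely, $\iterateD{m+1}{t}{\vecpsif{\vec{x}_0}}$ exists, and the Weierstrass $M$-Test remark from the introduction (after pulling out the factor $\exp{t\mat{A}}$) makes it continuous on $\Omega_\delta$. Carrying out the $s$-integration and using $\alpha>1$, $\mu_1+\sigma<0$, $t\geq0$, and $\norm{\vec{x}_0}\leq\delta$ to absorb the surplus decay and the extra factor $\norm{\vec{x}_0}^{\alpha-1}$, one gets
\[
    \norm{\iterateD{m+1}{t}{\vecpsif{\vec{x}_0}}}\leq\rb{C_1+Lk^\alpha}\exp{\rb{\mu_1+\sigma}t}\norm{\vec{x}_0},
    \qquad L:=\frac{k_1k_2\,\delta^{\alpha-1}}{\gamma}.
\]

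To close the induction I need $C_1+Lk^\alpha\leq k$, i.e.\ (with $k=2C_1$) $L\,2^\alpha C_1^{\alpha-1}\leq1$; since $C_1$ stays bounded while $L=\bigO{\delta^{\alpha-1}}\to0$ as $\delta\to0$, one final shrinking of $\delta$---depending on $\sigma$ but not on $m$---secures this, and then $\norm{\iterateD{m+1}{t}{\vecpsif{\vec{x}_0}}}\leq k\exp{\rb{\mu_1+\sigma}t}\norm{\vec{x}_0}$, so $\delta$ and $k$ are indeed independent of $m$. I expect the genuine obstacle to be exactly this uniformity bookkeeping---checking that the scalar recursion $K_{m+1}=C_1+LK_m^\alpha$ started from $K_1=C_1$ has a bounded orbit, which is what forces $\delta$ to be small---rather than any single estimate: once $\gamma>0$, which is the content of the hypothesis relating $\sigma$, $\kappa$, and $\alpha$, the convergence of the iterated integrals and the decay rate $\exp{\rb{\mu_1+\sigma}t}$ fall out routinely from \eqref{eq004}, \eqref{eq005}, and \eqref{eq015}.
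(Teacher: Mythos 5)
Your proposal is correct and follows essentially the same route as the paper: an induction on $m$ with a single $\delta$ and $k$, where the hypothesis $\rb{\alpha+1}\sigma<\rb{\kappa-\alpha}\mu_1$ supplies the exponential gap that makes the flipped integral converge, and the surplus factor $\exp{\rb{\alpha-1}\rb{\mu_1+\sigma}t}\norm{\vec{x}_0}^{\alpha-1}$ is absorbed by shrinking $\delta$ so the estimate closes uniformly in $m$. The only difference is cosmetic bookkeeping: the paper fixes $k$ by solving the linear fixed-point identity \eqref{eq019} subject to $\delta<\fracslash{\delta_1}{k}$, whereas you take $k=2C_1$ and shrink $\delta$ until $C_1+Lk^{\alpha}\leq k$.
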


\begin{proof}
Let ${\vec{y}_0:=\vecpsif{\vec{x}_0}}$. We need to be careful to
find ${\delta,k>0}$ which work for each $m$.
\begin{itemize}\setlength{\itemsep}{-\itemsep}\setlength{\listparindent}{0pt}
    \item
        Let ${\delta_1,\ell_1>0}$ be such that
        ${\norm{\vecf{b}{\vec{x}}}\leq\ell_1\norm{\vec{x}}^{\alpha}}$
        for all ${\vec{x}\in\ballO{\delta_1}}$.  See
        Equation~\eqref{eq004}.
    \item
        Let ${\delta_2>0}$ be such that ${\norm{\flowB{t}{\vec{x}_0}}<\delta_1}$
        for all ${\rb{t,\vec{x}_0}\in\Omega_{\delta_2}}$. See
        Equation~\eqref{eq006}. It follows that if
        ${\norm{\vec{x}_0}<\delta_2}$ then $\vec{y}_0$ exists.
    \item
        Let ${\ell_3>0}$ be such that
        ${\norm{\vec{y}_0}\leq\ell_3\norm{\vec{x}_0}}$ for all
        ${\vec{x}_0\in\ballO{\delta_2}}$. See
        Equation~\eqref{eq015}.
    \item
        Let ${\ell_4>0}$ be such that ${\norm{\exp{t\mat{A}}}\leq\ell_4\,\exp{\rb{\mu_1+\sigma}t}}$ for all ${t \geq
        0}$. See Equation~\eqref{eq005}.
    \item
        Let ${\ell_5>0}$ be such that ${\norm{\exp{-t\mat{A}}}\leq\ell_5\,\exp{\rb{-\kappa\mu_1+\sigma}t}}$ for all ${t \geq
        0}$. See Equation~\eqref{eq005}.
\end{itemize}
There is no harm in taking $\delta_1$ small enough so that
\[
    0 < \delta_1 < \sqb{ \frac{ \rb{\kappa\!-\!\alpha}\mu_1\!-\!\rb{\alpha\!+\!1}\sigma }{ \ell_1 \ell_5 } }^{\frac{1}{\alpha-1}}.
\]
Note that the restriction on $\sigma$ implies
\begin{equation} \label{eq018}
    \mu_1 < \mu_1 + \sigma < 0
    \mymbox{and}
    \rb{ \kappa - \alpha } \mu_1 - \rb{ \alpha + 1 } \sigma > 0.
\end{equation}
Take
\[
    k := \ell_3 \ell_4 \sqb{ 1 - \frac{ \ell_1 \ell_5 \delta_1^{\alpha-1} }{ \rb{\kappa\!-\!\alpha}\mu_1\!-\!\rb{\alpha\!+\!1}\sigma } }^{-1} > \ell_3 \ell_4
    \mymbox{and}
    \delta \in \rb{ 0, \min{}{ \frac{\delta_1}{k}, \delta_2 } }.
\]
The upper bound imposed on $\delta_1$ ensures $k$ is defined and
greater than ${\ell_3\ell_4}$. Observe
\begin{equation} \label{eq019}
    \ell_3 \ell_4 + \sqb{ \frac{ \ell_1 \ell_5 k \delta_1^{\alpha-1} }{ \rb{\kappa\!-\!\alpha}\mu_1\!-\!\rb{\alpha\!+\!1}\sigma } } = k.
\end{equation}
The proof of the result will be by induction on $m$.  Assume that
${\rb{t,\vec{x}_0}\in\Omega_\delta}$.

Consider first the base case ${m=1}$.  Since $\vec{y}_0$ exists,
so does ${\iterateD{1}{t}{\vec{y}_0}=\exp{t\mat{A}}\vec{y}_0}$. Furthermore,
\[
    \norm{ \iterateD{1}{t}{\vec{y}_0} }
    \leq \ell_4 \, \exp{\rb{\mu_1+\sigma}t} \norm{\vec{y}_0}
    \leq \ell_3 \ell_4 \, \exp{\rb{\mu_1+\sigma}t} \norm{\vec{x}_0}
    \leq k \, \exp{\rb{\mu_1+\sigma}t} \norm{\vec{x}_0}.
\]
Thus, the result is true for ${m=1}$.

Now, assume the result is true for ${m\in\Nd{}}$.  That is,
$\iterateD{m}{t}{\vec{y}_0}$ exists for
${\vec{x}_0\in\ballO{\delta}}$ and satisfies
${\norm{\iterateD{m}{t}{\vec{y}_0}} \leq k \,
\exp{\rb{\mu_1+\sigma}t} \norm{\vec{x}_0}}$. Since
${\norm{\vec{x}_0}<\delta}$, we see
${\norm{\iterateD{m}{t}{\vec{y}_0}}<\delta_1}$ and thus
$\iterateD{m+1}{t}{\vec{y}_0}$ exists. Using \eqref{eq016b}, the
induction hypothesis, and the estimates given at the beginning of
the proof,
\begin{align*}
    \norm{\iterateD{m+1}{t}{\vec{y}_0}}
    &\leq \ell_3 \ell_4 \, \exp{\rb{\mu_1+\sigma}t} \norm{\vec{x}_0} + \ell_1 \ell_5 k^{\alpha} \exp{\rb{\kappa\mu_1-\sigma}t} \sqb{ \integral{t}{\infty}{ \exp{\sqb{\rb{\alpha-\kappa}\mu_1+\rb{\alpha+1}\sigma}s} }{s} } \norm{\vec{x}_0}^{\alpha} \\
    &= \setb{ \ell_3 \ell_4 + \sqb{ \frac{ \ell_1 \ell_5 k^{\alpha} }{ \rb{\kappa\!-\!\alpha}\mu_1\!-\!\rb{\alpha\!+\!1}\sigma } } \exp{\rb{\alpha-1}\rb{\mu_1+\sigma}t} \norm{\vec{x}_0}^{\alpha-1} } \exp{\rb{\mu_1+\sigma}t} \norm{\vec{x}_0}.
\end{align*}
Since ${0 < \exp{\rb{\alpha-1}\rb{\mu_1+\sigma}t} \leq 1}$ and
${\norm{\vec{x}_0}<\delta<\fracslash{\delta_1}{k}}$, we have
\[
    \norm{\iterateD{m+1}{t}{\vec{y}_0}}
    \leq \setb{ \ell_3 \ell_4 + \sqb{ \frac{ \ell_1 \ell_5 k \delta_1^{\alpha-1} }{ \rb{\kappa\!-\!\alpha}\mu_1\!-\!\rb{\alpha\!+\!1}\sigma } } } \exp{\rb{\mu_1+\sigma}t} \norm{\vec{x}_0}.
\]
Using \eqref{eq019}, ${\norm{\iterateD{m+1}{t}{\vec{y}_0}} \leq
k\,\exp{\rb{\mu_1+\sigma}t}\norm{\vec{x}_0}}$. Hence, the result is
true for ${m+1}$. By induction, the result is true for each
${m\in\Nd{}}$.
\end{proof}

\begin{thm} \label{thm001}
Consider the transformation defined by \eqref{eq013} and the
iterates defined by \eqref{eq016}. Let ${\sigma>0}$ be small enough
so that ${\rb{\alpha+1}\sigma<\rb{\kappa-\alpha}\mu_1}$. Then, there
are constants ${\delta>0}$ and
${\seq{k}{m}{1}{\infty}\subset\Rd{}_+}$ such that, for all
${\rb{t,\vec{x}_0}\in\Omega_\delta}$ and ${m\in\Nd{}}$,
\begin{equation} \label{eq020}
    \norm{ \flowB{t}{\vec{x}_0} - \iterateD{m}{t}{\vecpsif{\vec{x}_0}} }
    \leq k_m \exp{ \sqb{ \alpha + \rb{ m - 1 } \beta } \sqb{ \mu_1 + \sigma  } t } \norm{ \vec{x}_0 }^{ \alpha + \rb{ m - 1 } \beta }.
\end{equation}
\end{thm}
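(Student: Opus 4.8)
The plan is to prove \eqref{eq020} by induction on $m$, using the integral equation \eqref{eq014} for the flow and the recursion \eqref{eq016b} for the iterates so that the two share the common affine term $\exp{t\mat{A}}\vecpsif{\vec{x}_0}$ and differ only through their integral tails. Throughout I would fix $\sigma>0$ with $\rb{\alpha+1}\sigma<\rb{\kappa-\alpha}\mu_1$, collect a single $\delta>0$ small enough that the estimates \eqref{eq004}, \eqref{eq006}, \eqref{eq011}, Proposition~\ref{prop001}, and the decay bounds \eqref{eq005} all apply simultaneously on $\Omega_\delta$, and abbreviate $\vec{y}_0:=\vecpsif{\vec{x}_0}$. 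I would also record the elementary integral identity $\integral{t}{\infty}{\exp{\gamma s}}{s}=-\exp{\gamma t}/\gamma$ for $\gamma<0$, since each inductive step consumes one such integration and it is exactly this that produces the factor-of-$\beta$ improvement per step.

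For the base case $m=1$, subtract \eqref{eq016a} from \eqref{eq014} to get $\flowB{t}{\vec{x}_0}-\iterateD{1}{t}{\vec{y}_0}=-\integral{t}{\infty}{\exp{\rb{t-s}\mat{A}}\vecf{b}{\flowB{s}{\vec{x}_0}}}{s}$; then bound the integrand using $\norm{\exp{\rb{t-s}\mat{A}}}\leq\ell_4\exp{\rb{\mu_1+\sigma}\rb{t-s}}$ (for $s\ge t$ this requires care with the sign, so I would instead write $\exp{\rb{t-s}\mat{A}}=\exp{t\mat{A}}\exp{-s\mat{A}}$ and use \eqref{eq005} on each factor, exactly as in the proof of Proposition~\ref{prop001}), together with $\norm{\vecf{b}{\flowB{s}{\vec{x}_0}}}\leq k_1\norm{\flowB{s}{\vec{x}_0}}^\alpha\leq k_1 k^\alpha\exp{\alpha\rb{\mu_1+\sigma}s}\norm{\vec{x}_0}^\alpha$ from \eqref{eq004} and \eqref{eq006}. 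Carrying out the single integration in $s$ — the exponent $\alpha\mu_1-\kappa\mu_1$ appearing there is negative precisely because $\kappa<\alpha$ — yields $\norm{\flowB{t}{\vec{x}_0}-\iterateD{1}{t}{\vec{y}_0}}\leq k_1\exp{\alpha\rb{\mu_1+\sigma}t}\norm{\vec{x}_0}^\alpha$ for a suitable constant $k_1$, which is \eqref{eq020} at $m=1$.

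For the inductive step, assume \eqref{eq020} holds for $m$. Subtracting \eqref{eq016b} from \eqref{eq014} gives
\[
    \flowB{t}{\vec{x}_0}-\iterateD{m+1}{t}{\vec{y}_0}=-\integral{t}{\infty}{\exp{\rb{t-s}\mat{A}}\sqb{\vecf{b}{\flowB{s}{\vec{x}_0}}-\vecf{b}{\iterateD{m}{s}{\vec{y}_0}}}}{s}.
\]
Now apply the mean value inequality to $\vec{b}$: since both $\flowB{s}{\vec{x}_0}$ and $\iterateD{m}{s}{\vec{y}_0}$ lie in $\ballO{\delta}$ (by \eqref{eq006} and \eqref{eq017}), the segment joining them does too, so $\norm{\vecf{b}{\flowB{s}{\vec{x}_0}}-\vecf{b}{\iterateD{m}{s}{\vec{y}_0}}}\leq\rb{\sup{}{\norm{\mat{D}\vecf{b}{\cdot}}}}\norm{\flowB{s}{\vec{x}_0}-\iterateD{m}{s}{\vec{y}_0}}$ on that segment, and the supremum of $\norm{\mat{D}\vecf{b}{\vec{x}}}$ over the segment is $\leq k_2\max{}{\norm{\flowB{s}{\vec{x}_0}},\norm{\iterateD{m}{s}{\vec{y}_0}}}^\beta\leq k_2\,(\mathrm{const})^\beta\exp{\beta\rb{\mu_1+\sigma}s}\norm{\vec{x}_0}^\beta$ by \eqref{eq004}, \eqref{eq006}, and \eqref{eq017}. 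Feeding in the induction hypothesis \eqref{eq020} for the remaining factor, the integrand is bounded by a constant times $\exp{\rb{t-s}\mat{A}}$-norm times $\exp{\sqb{\beta+\alpha+\rb{m-1}\beta}\rb{\mu_1+\sigma}s}\norm{\vec{x}_0}^{\alpha+m\beta}$; writing $\exp{\rb{t-s}\mat{A}}=\exp{t\mat{A}}\exp{-s\mat{A}}$ and bounding each factor via \eqref{eq005} turns the $s$-integral into $\integral{t}{\infty}{\exp{\gamma s}}{s}$ with $\gamma=\sqb{\alpha+m\beta}\mu_1+(\mathrm{const})\sigma-\kappa\mu_1$, which is negative for $\sigma$ small since $\alpha\mu_1-\kappa\mu_1<0$ and the $m\beta\mu_1$ term only helps; performing the integration leaves $\exp{\rb{\mu_1+\sigma}t}$-type decay with the exponent $\alpha+m\beta$ on both the exponential and $\norm{\vec{x}_0}$, i.e. exactly \eqref{eq020} at level $m+1$, with $k_{m+1}$ expressible in terms of $k_m$, $k_1$, $k_2$, and the decay constants.

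The main obstacle is bookkeeping rather than conceptual: I must ensure the single $\delta$ is chosen (independently of $m$) so that $\iterateD{m}{s}{\vec{y}_0}$ stays in $\ballO{\delta}$ uniformly in $m$ — this is exactly what Proposition~\ref{prop001} supplies — and I must track the accumulation of $\sigma$-terms in the exponents so that the governing rate $\gamma$ stays strictly negative at every level; the condition $\rb{\alpha+1}\sigma<\rb{\kappa-\alpha}\mu_1$ is the worst case (it is what is needed already at $m=1$, and larger $m$ only improves matters because $\mu_1<0$), so no new smallness requirement on $\sigma$ arises. The constants $k_m$ are allowed to grow with $m$, so there is no need to control them uniformly; only $\delta$ and the exponential rates must be uniform, and both are handled by the preceding results.
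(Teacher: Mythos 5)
Your proposal is correct and follows essentially the same route as the paper's proof: induction on $m$, with the base case obtained by subtracting \eqref{eq016a} from \eqref{eq014} and integrating the estimates \eqref{eq004}--\eqref{eq006}, and the inductive step via the Mean Value Theorem on the segment joining $\flowB{s}{\vec{x}_0}$ and $\iterateD{m}{s}{\vecpsif{\vec{x}_0}}$, Proposition~\ref{prop001} for uniform-in-$m$ control, and one further integration whose convergence is governed by the condition $\rb{\alpha+1}\sigma<\rb{\kappa-\alpha}\mu_1$, worst at $m=1$. The only differences are cosmetic (e.g.\ splitting $\exp{\rb{t-s}\mat{A}}=\exp{t\mat{A}}\exp{-s\mat{A}}$ rather than bounding $\norm{\exp{-\rb{s-t}\mat{A}}}$ directly), and your acknowledged bookkeeping of a single $\delta$ is exactly what the paper carries out.
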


\begin{proof}
Let ${\vec{y}_0:=\vecpsif{\vec{x}_0}}$. We need to be careful to
find ${\delta>0}$ which works for each ${m\in\Nd{}}$.
\begin{itemize}\setlength{\itemsep}{-\itemsep}
    \item
        Let ${\delta_1,\ell_1>0}$ be such that
        ${\norm{\vecf{b}{\vec{x}}}\leq\ell_1\norm{\vec{x}}^{\alpha}}$
        for all ${\vec{x}\in\ballO{\delta_1}}$.  See
        Equation~\eqref{eq004}.
    \item
        Let ${\delta_2,\ell_2>0}$ be such that ${\norm{\mat{D}\vecf{b}{\vec{x}}}\leq\ell_2\norm{\vec{x}}^{\beta}}$
        for all ${\vec{x}\in\ballO{\delta_2}}$.  See
        Equation~\eqref{eq004}.
    \item
        Let ${\delta_3,\ell_3>0}$ be such that
        ${\norm{\flowB{t}{\vec{x}_0}}\leq\ell_3\,\exp{\rb{\mu_1+\sigma}t}\norm{\vec{x}_0}}$
        for all ${\rb{t,\vec{x}_0}\in\Omega_{\delta_3}}$. See
        Equation~\eqref{eq006}.
    \item
        Let ${\delta_4,\ell_4>0}$ be such that
        ${\norm{\iterateD{m}{t}{\vec{y}_0}}\leq\ell_4\,\exp{\rb{\mu_1+\sigma}t}\norm{\vec{x}_0}}$
        for all ${\rb{t,\vec{x}_0}\in\Omega_{\delta_4}}$ and ${m\in\Nd{}}$. See
        Equation~\eqref{eq017}.
    \item
        Let ${\ell_5>0}$ be such that ${\norm{\exp{-t\mat{A}}}\leq\ell_5\,\exp{(-\kappa\mu_1+\sigma)t}}$ for all ${t \geq
        0}$. See Equation~\eqref{eq005}.
    \item
        If ${\norm{\vec{x}_0}<\min{}{\delta_3,\delta_4}}$, then any
        point on the line segment connecting $\flowB{t}{\vec{x}_0}$ and
        $\iterateD{m}{t}{\vec{y}_0}$ is bounded in norm by ${\rb{\ell_3+\ell_4}\exp{\rb{\mu_1+\sigma}t}\norm{\vec{x}_0}}$ for
        all ${t \geq 0}$.
\end{itemize}
The proof of the theorem will be by induction on $m$.  Take
\[
    \delta := \min{}{\frac{\delta_1}{\ell_3},\frac{\delta_1}{\ell_4},\frac{\delta_2}{\ell_3+\ell_4}, \delta_3, \delta_4 }.
\]
Hence assume ${\rb{t,\vec{x}_0}\in\Omega_\delta}$ so all necessary estimates apply.

Consider first the base case, ${m=1}$. Using \eqref{eq014} and
\eqref{eq016a} along with the estimates at the beginning of the
proof,
\[
    \norm{ \flowB{t}{\vec{x}_0} - \iterateD{1}{t}{\vec{y}_0} }
    \leq \integral{t}{\infty}{ \norm{ \exp{\rb{t-s}\mat{A}} } \norm{ \vecf{b}{ \flowB{s}{\vec{x}_0} } } }{s}
    \leq \underbrace{ \sqb{ \frac{ \ell_1 \ell_3^{\alpha} \ell_5 }{ \rb{\kappa\!-\!\alpha}\mu_1\!-\!\rb{\alpha\!+\!1}\sigma } } }_{k_1} \exp{ \alpha \rb{ \mu_1 + \sigma } t } \norm{\vec{x}_0}^{\alpha}.
\]
Using \eqref{eq018}, which is valid in this proof as well,
${k_1>0}$. Thus, the theorem is true for ${m=1}$.

Now, assume the theorem is true for fixed ${m\in\Nd{}}$.
First, for any ${s \geq 0}$ consider the expression
\[
    \norm{ \vecf{b}{ \flowB{s}{\vec{x}_0} } - \vecf{b}{ \iterateD{m}{s}{\vec{y}_0} } }
    \leq \ell_2 \rb{\ell_3+\ell_4}^{\beta} \exp{\beta\rb{\mu_1+\sigma}s} \norm{ \flowB{s}{\vec{x}_0} - \iterateD{m}{s}{\vec{y}_0} } \norm{\vec{x}_0}^{\beta},
\]
where we applied the Mean Value Theorem and estimates given at the
beginning of the proof.  By the induction hypothesis, we know
\begin{align*}
    \norm{ \vecf{b}{ \flowB{s}{\vec{x}_0} } - \vecf{b}{ \iterateD{m}{s}{\vec{y}_0} } }
    &\leq \ell_2 \rb{\ell_3+\ell_4}^{\beta} \exp{\beta\rb{\mu_1+\sigma}s} \setb{ k_m \exp{ \sqb{ \alpha + \rb{ m - 1 } \beta } \sqb{ \mu_1 + \sigma  } s } \norm{ \vec{x}_0 }^{ \alpha + \rb{ m - 1 } \beta } } \norm{\vec{x}_0}^{\beta} \\
    &= k_m \ell_2 \rb{\ell_3+\ell_4}^{\beta} \exp{\rb{\alpha+m\beta}\rb{\mu_1+\sigma}s} \norm{\vec{x}_0}^{\alpha+m\beta}.
\end{align*}
It follows from the integral equation \eqref{eq014} for
$\flowB{t}{\vec{x}_0}$ and the definition \eqref{eq016b} for
$\iterateD{m+1}{s}{\vec{y}_0}$ that
\begin{align*}
    \norm{ \flowB{t}{\vec{x}_0} - \iterateD{m+1}{t}{\vec{y}_0} }
    &\leq \integral{t}{\infty}{ \norm{ \exp{\rb{t-s}\mat{A}} } \norm{ \vecf{b}{ \flowB{s}{\vec{x}_0} } - \vecf{b}{ \iterateD{m}{s}{\vec{y}_0} } } }{s} \\
    &\leq k_m \ell_2 \rb{\ell_3+\ell_4}^{\beta} \ell_5 \setb{ \integral{t}{\infty}{ \exp{\rb{\kappa\mu_1-\sigma}\rb{t-s}} \exp{\rb{\alpha+m\beta}\rb{\mu_1+\sigma}s} }{s} } \norm{\vec{x}_0}^{\alpha+m\beta} \\
    &= \underbrace{ \sqb{ \frac{ k_m \ell_2 \rb{\ell_3+\ell_4}^{\beta} \ell_5 }{ \rb{\kappa-\alpha}\mu_1 - \rb{\alpha+1}\sigma-m\beta\rb{\mu_1+\sigma} } } }_{k_{m+1}} \exp{\rb{\alpha+m\beta}\rb{\mu_1+\sigma}t} \norm{\vec{x}_0}^{\alpha+m\beta}.
\end{align*}
Using \eqref{eq018}, ${k_{m+1}>0}$. Thus, the theorem is true for
${m+1}$ and, by induction, for all ${m\in\Nd{}}$.
\end{proof}

\begin{rem} \label{rem001}
Consider the constants ${\seq{k}{m}{1}{\infty}\subset\Rd{}_+}$ from
Theorem~\ref{thm001} and its proof. It is apparent from their
construction there is some constant ${r>0}$, independent of $m$,
such that ${k_{m+1}\leq\rb{\fracslash{r}{m}}k_m}$ for all
${m\in\Nd{}}$ and hence ${k_{m+1}\leq\rb{\fracslash{r^m}{m!}}k_1}$
for all ${m\in\Nd{}}$. Consequently, ${k_m \to 0}$ as
${m\to\infty}$.
\end{rem}

\begin{cor} \label{cor001}
Consider the transformation defined by \eqref{eq013} and the
iterates defined by \eqref{eq016}. There exists ${\delta>0}$ such
that
\dsm{\lim_{m\to\infty}\iterateD{m}{t}{\vecpsif{\vec{x}_0}}=\flowB{t}{\vec{x}_0}}
uniformly in $\Omega_\delta$.
\end{cor}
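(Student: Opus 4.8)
The plan is to obtain this immediately by combining the quantitative closeness estimate \eqref{eq020} of Theorem~\ref{thm001} with the observation, recorded in Remark~\ref{rem001}, that the constants $k_m$ tend to $0$ as ${m\to\infty}$. Fix ${\sigma>0}$ small enough that ${\rb{\alpha+1}\sigma<\rb{\kappa-\alpha}\mu_1}$, and let ${\delta>0}$ and ${\seq{k}{m}{1}{\infty}\subset\Rd{}_+}$ be as furnished by Theorem~\ref{thm001}, so that \eqref{eq020} holds for all ${\rb{t,\vec{x}_0}\in\Omega_\delta}$ and all ${m\in\Nd{}}$. Since replacing $\delta$ by a smaller positive number leaves \eqref{eq020} valid on the smaller set, I would first assume without loss of generality that ${\delta\leq1}$.

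Next I would bound the right-hand side of \eqref{eq020} uniformly over $\Omega_\delta$, independently of $t$ and $\vec{x}_0$. By \eqref{eq018} we have ${\mu_1+\sigma<0}$; since also ${t\geq0}$ and ${\alpha+\rb{m-1}\beta>0}$, the exponential factor satisfies ${\exp{\sqb{\alpha+\rb{m-1}\beta}\sqb{\mu_1+\sigma}t}\leq1}$. Likewise, since ${\delta\leq1}$ and the exponent ${\alpha+\rb{m-1}\beta}$ is positive, ${\norm{\vec{x}_0}^{\alpha+\rb{m-1}\beta}\leq1}$ for all ${\vec{x}_0\in\ballO{\delta}}$. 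Substituting these two bounds into \eqref{eq020} yields
\[
    \norm{ \flowB{t}{\vec{x}_0} - \iterateD{m}{t}{\vecpsif{\vec{x}_0}} } \leq k_m
    \mymbox{for all}
    \rb{t,\vec{x}_0} \in \Omega_\delta, ~ m \in \Nd{}.
\]
By Remark~\ref{rem001}, ${k_m\to0}$ as ${m\to\infty}$ (indeed ${k_{m+1}\leq\rb{\fracslash{r^m}{m!}}k_1}$ for some ${r>0}$ independent of $m$). Hence ${\sup{\rb{t,\vec{x}_0}\in\Omega_\delta}{\norm{\flowB{t}{\vec{x}_0}-\iterateD{m}{t}{\vecpsif{\vec{x}_0}}}}\leq k_m\to0}$, which is precisely the statement that ${\iterateD{m}{t}{\vecpsif{\vec{x}_0}}\to\flowB{t}{\vec{x}_0}}$ uniformly on $\Omega_\delta$ as ${m\to\infty}$.

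There is essentially no obstacle here, since the substantive work has already been carried out in Theorem~\ref{thm001} and Remark~\ref{rem001}. The only point meriting a moment's attention is that neither factor on the right of \eqref{eq020} can spoil the uniformity as $m$ grows: the exponential never exceeds $1$ because ${\mu_1+\sigma<0}$, and the power ${\norm{\vec{x}_0}^{\alpha+\rb{m-1}\beta}}$ is controlled either by taking ${\delta\leq1}$ as above, or --- if one wishes to retain the original $\delta$ --- by noting that the factorial decay of $k_m$ from Remark~\ref{rem001} dominates the at-most-geometric growth of ${\delta^{\alpha+\rb{m-1}\beta}}$.
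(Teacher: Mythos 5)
Your proposal is correct and follows essentially the same route as the paper, which proves the corollary simply by combining the estimate \eqref{eq020} of Theorem~\ref{thm001} with the fact from Remark~\ref{rem001} that ${k_m\to0}$ as ${m\to\infty}$. The uniform bounding of the exponential factor and of ${\norm{\vec{x}_0}^{\alpha+\rb{m-1}\beta}}$ that you spell out is exactly the (implicit) content of the paper's one-line argument, so there is nothing further to add.
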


\begin{proof}
It follows from Theorem~\ref{thm001} and Remark~\ref{rem001}.
\end{proof}

\subsection{Approximations}

\subsubsection{Truncating the Iterates}

Theorem~\ref{thm001} guarantees that $\iterateD{m}{t}{\vec{y}_0}$
correctly approximates $\flowB{t}{\vec{x}_0}$ up to a certain order
when ${\vec{y}_0=\vecpsif{\vec{x}_0}}$. However,
$\iterateD{m}{t}{\vec{y}_0}$ may contain higher-order terms.
Fortunately, as is evident from the proof of the theorem, we can
discard the terms of $\iterateD{m}{t}{\vec{y}_0}$ which are not
guaranteed to be correct by the theorem. If we use this simplified
iterate in the calculation of $\iterateD{m+1}{t}{\vec{y}_0}$ we will
not sacrifice accuracy. This potentially can simplify the
calculation of iterates greatly.

\subsubsection{Truncating the Nonlinear Part}

The integration in the recursive definition \eqref{eq016b} of the
iterates may be difficult if not impossible. However, it is possible
to replace $\vecf{b}{\vec{x}}$ with a Taylor polynomial of an
appropriate order and not sacrifice accuracy. Assume that
${\vec{b}\in\Cr{\infty}{\ballO{\delta},\Rn}}$, where ${\delta>0}$ is
small.  By Taylor's Theorem, we can write
${\vecf{b}{\vec{x}}\sim\sum_{i=\ell}^\infty \vecif{b}{i}{\vec{x}}}$
as ${\norm{\vec{x}} \to 0}$ for some index
${\ell\in\setb{2,3,\ldots}}$, where each $\vecif{b}{i}{\vec{x}}$ has
components which are homogeneous polynomials of degree $i$. Note
that we can take ${\alpha=\ell}$ and ${\beta=\ell-1}$. Define
${\vecmf{b}{m}{\vec{x}} := \sum_{i=\ell}^m \vecif{b}{i}{\vec{x}}}$
for each ${m\in\setb{\ell,\ell+1,\ldots}}$, which is the Taylor
polynomial of $\vecf{b}{\vec{x}}$ of order $m$. Assuming
$\norm{\vec{x}_0}$ is sufficiently small and
${\vec{y}_0=\vecpsif{\vec{x}_0}}$, we can define
$\iterateD{m+1}{t}{\vec{y}_0}$ in exactly the same way as before
except with $\vecf{b}{\vec{x}}$ replaced by
$\vecmf{b}{\sqb{m+1}\sqb{\ell-1}}{\vec{x}}$. The resulting closeness
of $\iterateD{m}{t}{\vec{y}_0}$ to $\flowB{t}{\vec{x}_0}$ is no
different than that stated in Theorem~\ref{thm001}.

\subsubsection{Approximating the Transformation} \label{sec002.003.003}

Consider the pivotal transformation $\vecpsi$, defined in
\eqref{eq013}. The iterates \eqref{eq016} can be computed in terms
of $\vec{y}_0$ without knowing $\vecpsif{\vec{x}_0}$ explicitly.
Moreover, the iterates give the form of the asymptotic expansion for
$\flowB{t}{\vec{x}_0}$ with error estimates courtesy of
Theorem~\ref{thm001}. However, we can approximate
$\vecpsif{\vec{x}_0}$ if necessary with an iterative method. In a
later paper, we present an alternative method for finding or
approximating $\vecpsif{\vec{x}_0}$.

We will construct a sequence of approximations
$\setb{\vecpsimf{m}{\vec{x}_0}}_{m=1}^\infty$ for
$\vecpsif{\vec{x}_0}$. Assuming $\norm{\vec{x}_0}$ is sufficiently
small, Equations~\eqref{eq013} and \eqref{eq015} along with
Theorem~\ref{thm001} suggest we take
\begin{equation} \label{eq021}
    \vecpsimf{1}{\vec{x}_0} := \vec{x}_0
    \mymbox{and}
    \vecpsimf{m+1}{\vec{x}_0} := \vec{x}_0 + \integral{0}{\infty}{ \exp{-s\mat{A}} \vecf{b}{ \iterateD{m}{s}{\vecpsimf{m}{\vec{x}_0}} } }{s}
    \quad
    \rb{ m \in \Nd{} }.
\end{equation}
We can state how close $\vecpsimf{m}{\vec{x}_0}$ is to
$\vecpsif{\vec{x}_0}$. Claim~\ref{claim002}, required to prove
Proposition~\ref{prop002}, can be combined with Theorem~\ref{thm001}
and Proposition~\ref{prop002} to state how close
$\iterateD{m}{t}{\vecpsimf{m}{\vec{x}_0}}$ is to
$\flowB{t}{\vec{x}_0}$.

\begin{claim} \label{claim002}
Consider the transformation defined by \eqref{eq013} and the
iterates defined by \eqref{eq016}. For any sufficiently small
${\sigma>0}$, there exist ${\delta,k>0}$ such that, for all
${\vec{x}_0,\vec{y}_0\in\ballO{\delta}}$, ${t \geq 0}$, and
${m\in\Nd{}}$,
\[
    \norm{ \iterateD{m}{t}{\vecpsif{\vec{x}_0}} - \iterateD{m}{t}{\vec{y}_0} }
    \leq k \, \exp{ \rb{ \mu_1 + \sigma } t } \norm{ \vecpsif{\vec{x}_0} - \vec{y}_0 }.
\]
\end{claim}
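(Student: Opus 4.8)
The plan is to prove this by induction on $m$, exploiting the recursive structure \eqref{eq016b} of the iterates and the Lipschitz-type bound on $\vec{b}$ coming from \eqref{eq004}. First I would fix $\sigma>0$ small enough that the standing restriction $(\alpha+1)\sigma<(\kappa-\alpha)\mu_1$ holds (so in particular $\mu_1+\sigma<0$ and $(\kappa-\alpha)\mu_1-(\alpha+1)\sigma>0$), and then assemble the constants I need on a common small ball: an $\ell_1$ with $\norm{\vecf{b}{\vec{x}}}\leq\ell_1\norm{\vec{x}}^\alpha$, an $\ell_2$ with $\norm{\mat{D}\vecf{b}{\vec{x}}}\leq\ell_2\norm{\vec{x}}^\beta$, the bound $\norm{\iterateD{m}{t}{\vec{z}_0}}\leq\ell_3\,\exp{(\mu_1+\sigma)t}\norm{\vec{z}_0}$ uniform in $m$ from Proposition~\ref{prop001} (applied both with $\vec z_0=\vecpsif{\vec x_0}$ and — after shrinking $\delta$ — with $\vec z_0=\vec y_0$, noting the proof of that proposition only uses $\norm{\vec z_0}$ small), the bound $\norm{\vecpsif{\vec{x}_0}-\vec{x}_0}\leq\ell\norm{\vec x_0}^\alpha$ from \eqref{eq015}, and $\norm{\exp{-t\mat{A}}}\leq\ell_5\,\exp{(-\kappa\mu_1+\sigma)t}$ from \eqref{eq005}. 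Shrinking $\delta$ further I may assume every point on the segment joining $\iterateD{m}{s}{\vecpsif{\vec x_0}}$ and $\iterateD{m}{s}{\vec y_0}$ has norm at most $2\ell_3\,\exp{(\mu_1+\sigma)s}\norm{\cdot}$ bounded by the domain radius, so the Mean Value Theorem applies along it.

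The base case $m=1$ is immediate: $\iterateD{1}{t}{\vec z_0}=\exp{t\mat{A}}\vec z_0$, so $\norm{\iterateD{1}{t}{\vecpsif{\vec x_0}}-\iterateD{1}{t}{\vec y_0}}\leq\norm{\exp{t\mat{A}}}\norm{\vecpsif{\vec x_0}-\vec y_0}\leq\ell_4\,\exp{(\mu_1+\sigma)t}\norm{\vecpsif{\vec x_0}-\vec y_0}$, where $\ell_4$ is the constant from \eqref{eq005}; pick $k\geq\ell_4$. For the inductive step, subtract the two instances of \eqref{eq016b} to get
\[
    \norm{ \iterateD{m+1}{t}{\vecpsif{\vec{x}_0}} - \iterateD{m+1}{t}{\vec{y}_0} }
    \leq \integral{t}{\infty}{ \norm{\exp{(t-s)\mat{A}}}\, \norm{ \vecf{b}{\iterateD{m}{s}{\vecpsif{\vec{x}_0}}} - \vecf{b}{\iterateD{m}{s}{\vec{y}_0}} } }{s}.
\]
On the integrand, the Mean Value Theorem plus the Jacobian bound gives a factor $\ell_2\,(2\ell_3)^\beta\exp{\beta(\mu_1+\sigma)s}\norm{\vec x_0}^{?}$ — here I would bound $\norm{\vec x_0},\norm{\vec y_0}$ simply by $\delta$ so the $\norm{\cdot}^\beta$ produced by the MVT is absorbed into a constant — times $\norm{\iterateD{m}{s}{\vecpsif{\vec x_0}}-\iterateD{m}{s}{\vec y_0}}$, to which the induction hypothesis applies, yielding $k\,\exp{(\mu_1+\sigma)s}\norm{\vecpsif{\vec x_0}-\vec y_0}$. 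Combining with $\norm{\exp{(t-s)\mat{A}}}\leq\ell_5\,\exp{(-\kappa\mu_1+\sigma)(t-s)}$ leaves the scalar integral $\integral{t}{\infty}{\exp{(-\kappa\mu_1+\sigma)(t-s)}\exp{((1+\beta)(\mu_1+\sigma)-?\,)s}}{s}$, which converges because $\kappa\mu_1<0$ dominates (the exponent in $s$ inside the integral, $(1+\beta)\mu_1+\cdots-(-\kappa\mu_1+\sigma)<0$, follows from $\kappa<\alpha$ and the smallness of $\sigma$ exactly as in the proof of Proposition~\ref{prop001}); evaluating it produces $\exp{(\mu_1+\sigma)t}$ times a constant depending on $\delta,\ell_2,\ell_3,\ell_5,\beta$. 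The one delicate point is to arrange this constant to be $\leq 1$ so the same $k$ propagates: I would do this by choosing $\delta$ small enough that $\ell_2(2\ell_3)^\beta\ell_5\,\delta^{\beta}/\big((\kappa-\alpha)\mu_1-(\alpha+1)\sigma-(\text{lower order})\big)\leq 1$, precisely mirroring the fixed-point-style constant bookkeeping in Proposition~\ref{prop001}.

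The main obstacle is the uniformity in $m$: one must be sure the constants $\ell_3,k$ and the radius $\delta$ do not degrade as $m$ grows. This is handled by noting that Proposition~\ref{prop001} already supplies an $m$-independent bound on the iterates, so the MVF factor $(2\ell_3)^\beta$ is $m$-independent, and then — as in the last sentence above — absorbing the small power $\delta^\beta$ to keep the recursion constant at $1$ rather than letting it grow geometrically; the contraction-type structure (the decaying exponential under the integral) is exactly what makes this possible and is the crux of the argument. A minor secondary point is justifying that $\iterateD{m}{t}{\vec y_0}$ exists for the relevant $\vec y_0$; this follows by rerunning the existence half of Proposition~\ref{prop001} with $\vec y_0$ in place of $\vecpsif{\vec x_0}$, which is legitimate since that argument only used $\norm{\vec y_0}$ small (via \eqref{eq015}), so shrinking $\delta$ once more suffices.
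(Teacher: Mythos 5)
Your strategy---induction on $m$, the $m$-uniform decay of the iterates from Proposition~\ref{prop001} (rerun with a general small parameter in place of $\vecpsif{\vec{x}_0}$), the Mean Value Theorem with the Jacobian bound in \eqref{eq004}, and evaluation of the tail integral---is the natural argument; the paper omits its own proof of this claim, so there is no authorial proof to compare against, but this is clearly the intended route. There is, however, a concrete slip in your inductive step: subtracting the two instances of \eqref{eq016b} does \emph{not} leave only the integral, because the two iterates are seeded with different parameters. The correct identity is
\[
    \iterateD{m+1}{t}{\vecpsif{\vec{x}_0}} - \iterateD{m+1}{t}{\vec{y}_0}
    = \exp{t\mat{A}} \rb{ \vecpsif{\vec{x}_0} - \vec{y}_0 }
    - \integral{t}{\infty}{ \exp{\rb{t-s}\mat{A}} \sqb{ \vecf{b}{\iterateD{m}{s}{\vecpsif{\vec{x}_0}}} - \vecf{b}{\iterateD{m}{s}{\vec{y}_0}} } }{s},
\]
whereas your displayed inequality keeps only the integral term, and that inequality is false in general.

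Restoring the linear term changes the constant bookkeeping in a way that matters for uniformity in $m$: writing $\theta$ for the factor your integral estimate produces (of size $\ell_2(2\ell_3)^\beta\ell_5\,\delta^{\beta}$ divided by the relevant positive exponent gap), the recursion is $k_{m+1} \leq \ell_4 + \theta k_m$ with $k_1 = \ell_4$, so arranging merely $\theta \leq 1$ does not let ``the same $k$ propagate''---at $\theta = 1$ the constants grow like $m\,\ell_4$. You need $\theta$ strictly less than $1$ (say $\theta \leq \sfrac{1}{2}$ by shrinking $\delta$) and then $k := \ell_4/(1-\theta)$, which is exactly the fixed-point bookkeeping in the proof of Proposition~\ref{prop001}, where the analogous additive term $\ell_3\ell_4$ is carried along rather than dropped. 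With that correction your argument closes: the MVT segment stays in the ball where \eqref{eq004} applies by the $m$-independent bound on the iterates, $\exp{\beta\rb{\mu_1+\sigma}s} \leq 1$ absorbs the extra power of the exponential, and the flipped integral converges automatically since its exponent $\rb{1+\beta}\rb{\mu_1+\sigma} + \kappa\mu_1 - \sigma$ is negative for $\sigma < -\mu_1$---no appeal to $\kappa < \alpha$ is needed at that point, so your hedge there can simply be deleted.
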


\begin{proof}
The result follows intuitively from \eqref{eq016} and the proof is
omitted in the interest of space.
\end{proof}

\begin{prop} \label{prop002}
Consider the transformation defined by \eqref{eq013} and the
approximations defined by \eqref{eq021}. There are constants
${\delta>0}$ and ${\seq{k}{m}{1}{\infty}\subset\Rd{}_+}$ such that
\[
    \norm{ \vecpsif{\vec{x}_0} - \vecpsimf{m}{\vec{x}_0} }
    \leq k_m \norm{ \vec{x}_0 }^{\alpha+\rb{m-1}\beta}
    \mymbox{for all}
    \vec{x}_0 \in \ballO{\delta}
\]
and \dsm{\lim_{m\to\infty}\vecpsimf{m}{\vec{x}_0}=\vecpsif{\vec{x}_0}} uniformly in $\ballO{\delta}$.
\end{prop}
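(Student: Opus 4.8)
The plan is to prove the displayed estimate by induction on $m$ and then read off the uniform convergence. As a preliminary I would first record, by a separate easy induction from \eqref{eq021}, the uniform bound $\norm{\vecpsimf{m}{\vec{x}_0}}\leq 2\norm{\vec{x}_0}$ for all $m$ and all $\vec{x}_0\in\ballO{\delta}$ with $\delta$ fixed small: if this holds at stage $m$, then the proof of Proposition~\ref{prop001} applies verbatim with $\vecpsimf{m}{\vec{x}_0}$ in place of $\vecpsif{\vec{x}_0}$ (that proof uses only that the initial point has norm comparable to $\norm{\vec{x}_0}$, with the same constants), so $\norm{\iterateD{m}{s}{\vecpsimf{m}{\vec{x}_0}}}=\bigO{\exp{\rb{\mu_1+\sigma}s}\norm{\vec{x}_0}}$, and then \eqref{eq004} and \eqref{eq005} make the integral in \eqref{eq021} converge and be $\bigO{\norm{\vec{x}_0}^\alpha}$, so at most $\norm{\vec{x}_0}$ when $\norm{\vec{x}_0}$ is small (the exponent $\rb{\alpha-\kappa}\mu_1+\bigO{\sigma}$ of the integrand is negative by \eqref{eq010}). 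In particular each $\vecpsimf{m}{\vec{x}_0}$ is well defined, and the estimates of Proposition~\ref{prop001} and Theorem~\ref{thm001} are available at the initial point $\vecpsimf{m}{\vec{x}_0}$.

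The base case $m=1$ of the main induction is precisely \eqref{eq015}, with $k_1$ the constant there. For the inductive step, subtract \eqref{eq021} from \eqref{eq013} to get
\[
    \vecpsif{\vec{x}_0}-\vecpsimf{m+1}{\vec{x}_0}
    = \integral{0}{\infty}{\exp{-s\mat{A}}\sqb{\vecf{b}{\flowB{s}{\vec{x}_0}}-\vecf{b}{\iterateD{m}{s}{\vecpsimf{m}{\vec{x}_0}}}}}{s},
\]
and estimate the bracket by the Mean Value Theorem and \eqref{eq004}: every point of the segment joining $\flowB{s}{\vec{x}_0}$ and $\iterateD{m}{s}{\vecpsimf{m}{\vec{x}_0}}$ has norm $\bigO{\exp{\rb{\mu_1+\sigma}s}\norm{\vec{x}_0}}$ by \eqref{eq006} and Proposition~\ref{prop001}, so $\norm{\mat{D}\vecf{b}{\cdot}}$ along it is $\bigO{\exp{\beta\rb{\mu_1+\sigma}s}\norm{\vec{x}_0}^\beta}$ and hence
\[
    \norm{\vecf{b}{\flowB{s}{\vec{x}_0}}-\vecf{b}{\iterateD{m}{s}{\vecpsimf{m}{\vec{x}_0}}}}
    \leq C\exp{\beta\rb{\mu_1+\sigma}s}\norm{\vec{x}_0}^\beta\,\norm{\flowB{s}{\vec{x}_0}-\iterateD{m}{s}{\vecpsimf{m}{\vec{x}_0}}}.
\]
For the last factor I would split through $\iterateD{m}{s}{\vecpsif{\vec{x}_0}}$ by the triangle inequality: Theorem~\ref{thm001} bounds $\norm{\flowB{s}{\vec{x}_0}-\iterateD{m}{s}{\vecpsif{\vec{x}_0}}}$ by $k_m'\exp{\sqb{\alpha+\rb{m-1}\beta}\rb{\mu_1+\sigma}s}\norm{\vec{x}_0}^{\alpha+\rb{m-1}\beta}$, and Claim~\ref{claim002} together with the induction hypothesis bounds $\norm{\iterateD{m}{s}{\vecpsif{\vec{x}_0}}-\iterateD{m}{s}{\vecpsimf{m}{\vec{x}_0}}}$ by $k''k_m\exp{\rb{\mu_1+\sigma}s}\norm{\vec{x}_0}^{\alpha+\rb{m-1}\beta}$.

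Multiplying by $\norm{\exp{-s\mat{A}}}\leq\ell_5\exp{\rb{-\kappa\mu_1+\sigma}s}$ (from \eqref{eq005}) and integrating over $\sqb{0,\infty}$ then produces a sum of two improper integrals, each carrying the factor $\norm{\vec{x}_0}^{\alpha+m\beta}$, whose integrands are exponentials in $s$ with exponents $\sqb{\alpha+m\beta-\kappa}\mu_1+\bigO{\sigma}$ and $\sqb{\beta+1-\kappa}\mu_1+\bigO{\sigma}$. For $\sigma$ small both are negative by the closely-spaced hypothesis \eqref{eq010} (recall $\kappa<\alpha\leq\alpha+m\beta$, and $\beta+1\geq\alpha$ in the cases of interest here, e.g.\ $\beta=\alpha-1$ for a Taylor-polynomial nonlinearity), so the integrals converge and $\norm{\vecpsif{\vec{x}_0}-\vecpsimf{m+1}{\vec{x}_0}}\leq k_{m+1}\norm{\vec{x}_0}^{\alpha+m\beta}$, with $k_{m+1}$ built from $k_m$, $k_m'$ and fixed quantities of the shape $k_{m+1}\leq\rb{c/m}k_m'+c'k_m$ for constants $c,c'$ independent of $m$. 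This closes the induction, and a single $\delta$ serves all $m$ because the $\delta$'s and constants invoked from \eqref{eq004}, \eqref{eq005}, \eqref{eq006}, \eqref{eq015}, \eqref{eq017} and Claim~\ref{claim002} are $m$-independent. Finally $k_m'\to0$ by Remark~\ref{rem001}, so the recursion gives $k_m\leq K\rb{c'}^m$ for some $K$; after shrinking $\delta$ once more so that $c'\delta^\beta<1$, one gets $k_m\norm{\vec{x}_0}^{\alpha+\rb{m-1}\beta}\leq K\delta^\alpha\rb{c'\delta^\beta}^{m-1}\to0$ uniformly on $\ballO{\delta}$, which is the asserted uniform convergence.

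The main obstacle I anticipate is the uniformity bookkeeping rather than any isolated estimate: one must verify that invoking Proposition~\ref{prop001} and Theorem~\ref{thm001} at the non-canonical initial point $\vecpsimf{m}{\vec{x}_0}$ introduces no $m$-dependent constants (it does not, since those proofs use only that the initial point has norm comparable to $\norm{\vec{x}_0}$), and that the two exponents occurring in the integrand are negative. The second of these comes from the Claim~\ref{claim002}/Lipschitz contribution, carries only the slow decay rate $\mu_1+\sigma$, and is the tight one --- it is exactly there that \eqref{eq010} is needed; everything else simply parallels the proofs of Proposition~\ref{prop001} and Theorem~\ref{thm001}.
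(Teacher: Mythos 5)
Your proposal is correct and follows essentially the route the paper intends: the authors omit the proof but state that it uses induction, the Triangle Inequality, the Mean Value Theorem, Theorem~\ref{thm001}, and Claim~\ref{claim002}, which is exactly your splitting through $\iterateD{m}{s}{\vecpsif{\vec{x}_0}}$ after subtracting \eqref{eq021} from \eqref{eq013}, together with the preliminary uniform bound on $\vecpsimf{m}{\vec{x}_0}$ needed to make the iterates and integrals well defined. The one point you flag --- that the Claim~\ref{claim002} contribution requires $\beta+1>\kappa$ for its integral to converge --- is not a defect peculiar to your argument but is implicit in the paper's own reliance on Claim~\ref{claim002} (whose natural proof needs the same condition), and it holds in the situations the paper has in mind, where $\alpha\leq\beta+1$ (e.g.\ $\beta=\alpha-1$ for a smooth nonlinearity).
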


\begin{proof}
The proof is omitted in the interest of space. It involves
induction, the Triangle Inequality, the Mean Value Theorem,
Theorem~\ref{thm001}, and Claim~\ref{claim002}.
\end{proof}

\subsection{Examples}

\subsubsection{Star Node with Quadratic Nonlinearity} \label{sec002.004.001}

Take
\[
    \mat{A} := \twobytwomatrix{a}{0}{0}{a}
    \mymbox{and}
    \vecf{b}{\vec{x}} := \twobyonematrix{ b_{111} x_1^2 + b_{112} x_1 x_2 + b_{122} x_2^2 }{ b_{211} x_1^2 + b_{212} x_1 x_2 + b_{222} x_2^2 },
\]
where ${a<0}$ and ${\setb{b_{ijk}}_{i,j,k=1}^2\subset\Rd{}}$ are
constants.  Assume that at least one $b_{ijk}$ is non-zero and
consider the system
${\dot{\vec{x}}=\mat{A}\vec{x}+\vecf{b}{\vec{x}}}$. The origin, as we
see, is a star node. Now, ${\exp{t\mat{A}}=\exp{at}\mat{I}}$,
${\mu_1=a=\mu_2}$, and ${\kappa=1}$. Also, we can take ${\alpha=2}$
and ${\beta=1}$.  Thus, ${\kappa<\alpha}$ and the eigenvalues are
closely-spaced relative to the nonlinear part.

Define ${\vec{y}_0:=\vecpsif{\vec{x}_0}}$ and
${\vec{x}(t):=\flowB{t}{\vec{x}_0}}$, where $\norm{\vec{x}_0}$ is
sufficiently small. The first iterate, which gives us the
linearization, is ${\iterateD{1}{t}{\vec{y}_0}=\exp{at}\vec{y}_0}$.
Theorem~\ref{thm001} tells us ${x_1(t)\sim\exp{at}y_{01}}$ and
${x_2(t)\sim\exp{at}y_{02}}$ as ${t\to\infty}$. Provided ${y_{01}
\ne 0}$, ${x_2(t)\sim\rb{\fracslash{y_{02}}{y_{01}}}x_1(t)}$ as
${t\to\infty}$. The Hartman-Grobman Theorem yields basically the
same conclusion. We will use the second iterate to obtain a more
detailed relationship between $x_1(t)$ and $x_2(t)$. Using
\eqref{eq016b} and the fact
${\vecf{b}{\exp{at}\vec{y}_0}=\exp{2at}\vecf{b}{\vec{y}_0}}$,
\begin{equation} \label{eq022}
    \iterateD{2}{t}{\vec{y}_0}
    = \exp{at} \vec{y}_0 + \exp{2at} \vecxi,
    \mymbox{where}
    \vecxi := \rb{ \frac{1}{a} } \vecf{b}{\vec{y}_0} .
\end{equation}

\begin{description}
    \item[$\diamond$ Case 1: ${y_{01}=y_{02}=0}$.]
        If ${\vec{y}_0=\vec{0}}$, ${\iterateD{m}{t}{\vec{y}_0}=\vec{0}}$ for each ${m\in\Nd{}}$. By Corollary~\ref{cor001}, ${\vec{x}(t) \equiv \vec{0}}$.
    \item[$\diamond$ Case 2: ${y_{01} \ne 0}$.]
        It follows from \eqref{eq022} and Theorem~\ref{thm001} that, as ${t\to\infty}$,
        \begin{equation} \label{eq023}
            x_1(t) = \exp{at} y_{01} + \exp{2at} \xi_1 + \bigO{\exp{3at}}
            \mymbox{and}
            x_2(t) = \exp{at} y_{02} + \exp{2at} \xi_2 + \bigO{\exp{3at}}.
        \end{equation}
        Using \eqref{eq023}, ${x_1=\bigO{\exp{at}}}$ as ${t\to\infty}$ and ${\exp{at}=\bigO{x_1}}$ as ${x_1 \to 0}$. Now, the first equation of \eqref{eq023} can be written, as ${t\to\infty}$,
        \[
            \exp{at}
            = \rb{ \frac{x_1}{y_{01}} } \sqb{ 1 - \rb{ \frac{\xi_1}{y_{01}} } \exp{at} + \bigO{\exp{2at}} }
            = \rb{ \frac{1}{y_{01}} } x_1 - \rb{ \frac{\xi_1}{y_{01}^2} } \exp{at} x_1 + \bigO{\exp{3at}},
        \]
        where we used the fact ${\sqb{1+u+\bigO{u^2}}^{-1}=1-u+\bigO{u^2}}$ as ${u \to 0}$. Back-substituting $\exp{at}$,
        \[
            \exp{at}
            = \rb{ \frac{1}{y_{01}} } x_1 - \rb{ \frac{\xi_1}{y_{01}^3} } x_1^2 + \bigO{\exp{3at}}
            = \rb{ \frac{1}{y_{01}} } x_1 - \rb{ \frac{\xi_1}{y_{01}^3} } x_1^2 + \bigO{x_1^3}
        \]
        as ${t\to\infty}$ for the former and ${x_1 \to 0}$ for the latter. From the second equation of \eqref{eq023},
        \begin{equation} \label{eq024}
            x_2 = \rb{ \frac{y_{02}}{y_{01}} } x_1 + \rb{ \frac{\xi_2}{y_{01}^2} - \frac{y_{02}\xi_1}{y_{01}^3} } x_1^2 + \bigO{x_1^3}
            \mymbox{as}
            x_1 \to 0.
        \end{equation}
    \item[$\diamond$ Case 3: ${y_{02} \ne 0}$.]
        Similar to the case above, we get
        \[
            x_1 = \rb{ \frac{y_{01}}{y_{02}} } x_2 + \rb{ \frac{\xi_1}{y_{02}^2} - \frac{y_{01}\xi_2}{y_{02}^3} } x_2^2 + \bigO{x_2^3}
            \mymbox{as}
            x_2 \to 0.
        \]
\end{description}

For a more concrete example, consider the system
\begin{equation} \label{eq025}
    \twobyonematrix{\dot{x}_1}{\dot{x}_2}
    = \twobytwomatrix{-1}{0}{0}{-1} \twobyonematrix{x_1}{x_2} + \twobyonematrix{ x_1^2 + 8 \, x_1 x_2 + x_2^2 }{ 8 \, x_1^2 + x_1 x_2 + 8 \, x_2^2 }.
\end{equation}
Using \eqref{eq024}, provided ${y_{01} \ne 0}$,
\begin{equation} \label{eq026}
    x_2 = \rb{ \frac{y_{02}}{y_{01}} } x_1 + \sqb{ \rb{ \frac{y_{02}}{y_{01}} }^3 - 8 } x_1^2 + \bigO{x_1^3}
    \mymbox{as}
    x_1 \to 0.
\end{equation}
We can use the sign of ${\rb{\fracslash{y_{02}}{y_{01}}}^3-8}$ in
\eqref{eq026} to deduce concavity. A phase portrait for
\eqref{eq025} is sketched in Figure~\ref{fig001}.

\begin{figure}[t]
\begin{center}
    \includegraphics[width=400pt]{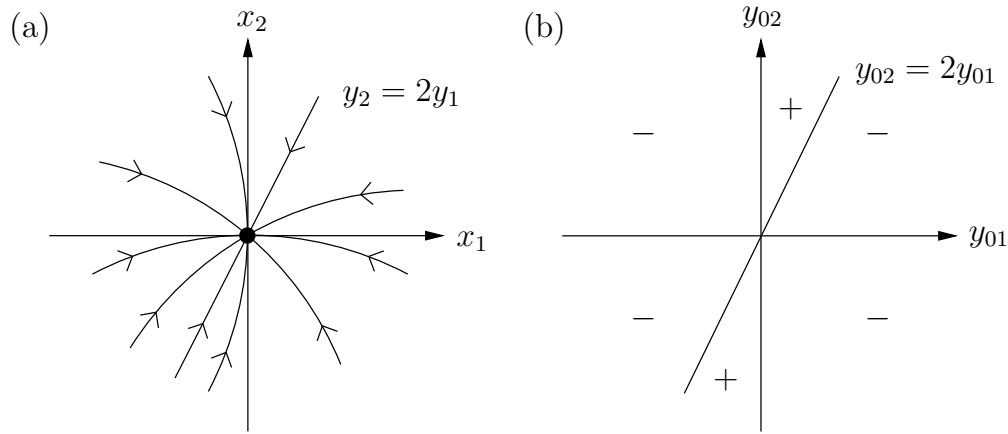}
    \caption[Phase portrait near the origin for an example system]{(a) The phase portrait
    near the origin for the example system \eqref{eq025}. (b) The sign of
    ${\rb{\fracslash{y_{02}}{y_{01}}}^3-8}$ in
    different regions of the $y_{01}y_{02}$-plane, using \eqref{eq015} and \eqref{eq026}, specifies concavity.} \label{fig001}
\end{center}
\end{figure}

\subsubsection{Example of Finding the Transformation}

Let
\[
    \mat{A} := \twobytwomatrix{-1}{0}{0}{-2}
    \mymbox{and}
    \vecf{b}{\vec{x}} := \twobyonematrix{0}{x_1^3}
\]
and consider the system
${\dot{\vec{x}}=\mat{A}\vec{x}+\vecf{b}{\vec{x}}}$. We will use the
approximations \eqref{eq021} to find $\vecpsif{\vec{x}_0}$. Here,
\[
    \vecpsimf{1}{\vec{x}_0} = \vec{x}_0
    \mymbox{and}
    \vecpsimf{m+1}{\vec{x}_0}
    = \twobyonematrix{ x_{01} }{ x_{02} + \integral{0}{\infty}{ \exp{2s} \sqb{ \iterateDi{1}{m}{s}{ \vecpsimf{m}{\vec{x}_0} } }^3 }{s} }
    \mymbox{for}
    m \in \Nd{}.
\]
An easy inductive argument and Proposition~\ref{prop002} establish
\[
    \iterateDi{1}{m}{t}{\vec{y}_0} = \exp{-t} y_{01},
    \quad
    \vecpsimf{m+1}{\vec{x}_0} = \twobyonematrix{ x_{01} }{ x_{02} + x_{01}^3 },
    \mymbox{and}
    \vecpsif{\vec{x}_0} = \twobyonematrix{ x_{01} }{ x_{02} + x_{01}^3 }.
\]

\subsection{Transforming Widely-Spaced to Closely-Spaced Eigenvalues}

This section deals with the case where the eigenvalues are
closely-spaced relative to the nonlinear part of the differential
equation.  That is, ${\kappa<\alpha}$.  Momentarily, we will deal with the
case where the eigenvalues are widely-spaced relative to the
nonlinear part.  That is, ${\kappa\geq\alpha}$. However, the techniques for widely-spaced eigenvalues are
decidedly more tedious. Fortunately, in practice it is sometimes
possible to transform a system with widely-spaced eigenvalues to a
system with closely-spaced eigenvalues. The transformation need not
be polynomial or even analytic.

The reduction of widely-spaced eigenvalues to closely-spaced eigenvalues is related to the theory of
normal forms. Assume the nonlinear vector field $\vec{b}$ is
analytic (or sufficiently smooth) at the origin and the eigenvalues
of the matrix $\mat{A}$ are widely-spaced relative to $\vec{b}$.

Suppose that there is no resonance in the eigenvalues of $\mat{A}$.
For a given ${m\in\Nd{}}$ with ${m>\kappa}$, by the Normal Form Theorem there exists a
sequence of near-identity, analytic transformations such that the
differential equation
${\dot{\vec{x}}=\mat{A}\vec{x}+\vecf{b}{\vec{x}}}$ is converted to a
new differential equation of the form
${\dot{\vec{y}}=\mat{A}\vec{y}+\vecf{c}{\vec{y}}}$ where
${\norm{\vecf{c}{\vec{y}}}=\bigO{\norm{\vec{y}}^m}}$ as
${\norm{\vec{y}} \to 0}$. The new system, of course, has eigenvalues
which are closely-spaced relative to the nonlinear part.

Suppose now that there is resonance in the eigenvalues of $\mat{A}$,
say with lowest order ${m\in\Nd{}}$. The best we can do is find a sequence of
near-identity, analytic transformations to convert
${\dot{\vec{x}}=\mat{A}\vec{x}+\vecf{b}{\vec{x}}}$ to
${\dot{\vec{y}}=\mat{A}\vec{y}+\vecf{c}{\vec{y}}}$ where
${\norm{\vecf{c}{\vec{y}}}=\bigO{\norm{\vec{y}}^m}}$ as
${\norm{\vec{y}} \to 0}$. If ${m>\kappa}$, then the eigenvalues of
the new system will be closely-spaced relative to the nonlinear
part.

\section{Widely-Spaced Eigenvalues}
\label{sec003}

\subsection{Introduction}

For simplicity, we will restrict our attention to a special case
which is most applicable to us. The general case is outlined in
\S\ref{sec003.005}. Consider the initial value problem
\begin{equation} \label{eq027}
    \dot{\vec{x}} = \mat{A} \vec{x} + \vecf{b}{\vec{x}},
    \quad
    \vec{x}(0) = \vec{x}_0,
    \mymbox{where}
    \mat{A} := \twobytwomatrix{a}{0}{0}{\kappa a}
    \mymbox{and}
    \vecf{b}{\vec{x}} = \twobyonematrix{ \scif{b}{1}{\vec{x}} }{ \scif{b}{2}{\vec{x}} }
\end{equation}
with ${a<0}$ and ${\kappa \geq 1}$. The matrix $\mat{A}$ and vector
field $\vec{b}$ satisfy the same assumptions as before (see
\eqref{eq003} and \eqref{eq004}). Additionally,
we assume
\begin{equation} \label{eq028}
    \kappa \geq \alpha.
\end{equation}
That is, the eigenvalues are widely-spaced relative to the nonlinear
part. Note that ${\kappa a \leq \alpha a < a < 0}$. Note also that
\eqref{eq027} can be written, where
${\vec{x}_0=\rb{x_{01},x_{02}}^T}$, as the integral equation
\begin{equation} \label{eq029}
    \flowC{t}{\vec{x}_0}
    = \twobyonematrix{ \exp{a t} x_{01} + \integral{0}{t}{ \exp{a\rb{t-s}} \scif{b}{1}{\flowC{s}{\vec{x}_0}} }{s} }{ \exp{\kappa a t} x_{02} + \integral{0}{t}{ \exp{\kappa a\rb{t-s}} \scif{b}{2}{\flowC{s}{\vec{x}_0}} }{s} }.
\end{equation}

We can specialize the basic estimates to the case at hand. First, we
know from Claim~\ref{claim001} that
\begin{equation} \label{eq030}
    \norm{ \exp{t\mat{A}} } = \exp{at}
    \mymbox{and}
    \norm{ \exp{-t\mat{A}} } = \exp{-\kappa a t}
    \mymbox{for all} t \geq 0.
\end{equation}
Furthermore, since $\mat{A}$ is diagonal, we know from \eqref{eq006}
that there are ${\delta,k>0}$ such that
\begin{equation} \label{eq031}
    \norm{ \flowC{t}{\vec{x}_0} } \leq k \, \exp{at} \norm{\vec{x}_0}
    \mymbox{for all}
    \rb{t,\vec{x}_0} \in \Omega_\delta.
\end{equation}

\begin{claim}
For any ${\sigma>0}$, there are ${\delta,k>0}$ such that
\begin{equation} \label{eq032}
    \abs{ \flowCi{2}{t}{\vec{x}_0} }
    \leq k \, \exp{ \rb{ \alpha a + \sigma } t } \norm{\vec{x}_0}
    \mymbox{for all}
    \rb{t,\vec{x}_0} \in \Omega_\delta.
\end{equation}
\end{claim}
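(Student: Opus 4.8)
The plan is to estimate the second component $\flowCi{2}{t}{\vec{x}_0}$ directly from the integral equation \eqref{eq029}. We have
\[
    \abs{ \flowCi{2}{t}{\vec{x}_0} }
    \leq \exp{\kappa a t} \abs{x_{02}}
    + \integral{0}{t}{ \exp{\kappa a\rb{t-s}} \abs{ \scif{b}{2}{\flowC{s}{\vec{x}_0}} } }{s}.
\]
For the first term, since $\kappa \geq \alpha$ and $a < 0$, we have $\exp{\kappa a t} \leq \exp{\alpha a t} \leq \exp{\rb{\alpha a + \sigma}t}$ for $t \geq 0$, so $\exp{\kappa a t}\abs{x_{02}} \leq \exp{\rb{\alpha a + \sigma}t}\norm{\vec{x}_0}$. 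For the integrand, I would use $\abs{\scif{b}{2}{\vec{x}}} \leq \norm{\vecf{b}{\vec{x}}} \leq k_1 \norm{\vec{x}}^{\alpha}$ from \eqref{eq004} (valid once $\delta$ is small enough) together with the basic decay estimate \eqref{eq031}, which gives $\norm{\flowC{s}{\vec{x}_0}}^{\alpha} \leq k^{\alpha} \exp{\alpha a s}\norm{\vec{x}_0}^{\alpha}$ on $\Omega_\delta$. Hence the integral term is bounded by
\[
    k_1 k^{\alpha} \norm{\vec{x}_0}^{\alpha} \integral{0}{t}{ \exp{\kappa a\rb{t-s}} \exp{\alpha a s} }{s}
    = k_1 k^{\alpha} \norm{\vec{x}_0}^{\alpha} \exp{\kappa a t} \integral{0}{t}{ \exp{\rb{\alpha - \kappa}a s} }{s}.
\]

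The main (mild) obstacle is that the sign of $\alpha - \kappa$ in the exponent of the integrand is $\leq 0$, so the integral $\integral{0}{t}{\exp{\rb{\alpha-\kappa}as}}{s}$ need not decay; indeed when $\kappa = \alpha$ it equals $t$, and when $\kappa > \alpha$ it is $\bigO{\exp{\rb{\kappa-\alpha}\rb{-a}t}}$ (since $\rb{\alpha-\kappa}a > 0$). In the first case the integral term is $\bigO{t\,\exp{\kappa a t}\norm{\vec{x}_0}^{\alpha}}$ and in the second it is $\bigO{\exp{\alpha a t}\norm{\vec{x}_0}^{\alpha}}$; in either case, after absorbing the polynomial factor $t$ (which satisfies $t\,\exp{-\sigma t} \to 0$, hence $t \leq C_\sigma \exp{\sigma t}$ for some constant) and using $\norm{\vec{x}_0}^{\alpha} \leq \delta^{\alpha-1}\norm{\vec{x}_0}$, the whole term is bounded by a constant times $\exp{\rb{\alpha a + \sigma}t}\norm{\vec{x}_0}$. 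Combining the two contributions and shrinking $\delta$ as needed to make all invoked estimates simultaneously valid yields \eqref{eq032}, with $k$ independent of $\rb{t,\vec{x}_0}$.

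I would phrase the write-up to handle $\kappa = \alpha$ and $\kappa > \alpha$ uniformly by the single bound $\integral{0}{t}{\exp{\rb{\alpha-\kappa}as}}{s} \leq C_\sigma \exp{\rb{\kappa-\alpha}\rb{-a}t + \sigma t}$ valid for all $t \geq 0$ and all $\kappa \geq \alpha$ (the $\sigma t$ slack covers the linear growth when $\kappa = \alpha$), which then collapses cleanly against the $\exp{\kappa a t}$ prefactor. This is entirely routine once set up, and parallels the estimates already used in the proof of Theorem~\ref{thm001} and in establishing \eqref{eq011}.
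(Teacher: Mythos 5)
Your proposal is correct and follows essentially the same route as the paper: estimate the second component of \eqref{eq029} directly, bound $\abs{\scif{b}{2}{\flowC{s}{\vec{x}_0}}}$ via \eqref{eq004} and \eqref{eq031}, and use the $\sigma$ slack to absorb the borderline case ${\kappa=\alpha}$ (the paper simply inserts $\exp{\sigma s}$ into the integrand before integrating rather than splitting cases, which is the same maneuver), finishing with ${\norm{\vec{x}_0}^{\alpha}\leq\delta^{\alpha-1}\norm{\vec{x}_0}}$. No gaps.
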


\begin{proof}
Applying the estimates \eqref{eq004} and \eqref{eq031} to the second
component of \eqref{eq029}, there are ${\delta,k_1,k_2>0}$ such that
if ${\rb{t,\vec{x}_0}\in\Omega_\delta}$ then
\[
    \abs{ \flowCi{2}{t}{\vec{x}_0} }
    \leq k_1 \, \exp{\kappa a t} \norm{\vec{x}_0} + k_2 \, \exp{\kappa at} \setb{ \integral{0}{t}{ \exp{\sqb{\sigma-\rb{\kappa-\alpha}a}s} }{s} } \norm{\vec{x}_0}^\alpha,
\]
where we inserted arbitrary ${\sigma>0}$ to allow for the
possibility that ${\kappa=\alpha}$. Simplify to get \eqref{eq032}.
\end{proof}

\begin{rem}
When ${\kappa>\alpha}$, we can take ${\sigma=0}$ in \eqref{eq032}
and throughout this section except for \S\ref{sec003.005}.
\end{rem}

\subsection{Iterates}

\subsubsection{First Component of the Transformation}

In the previous section, where the eigenvalues were closely-spaced
relative to the nonlinear part, we were able to ``flip the
integral'' in the general integral equation \eqref{eq002}. However,
we cannot do this in this section. Why? The estimates \eqref{eq004},
\eqref{eq030}, and \eqref{eq031} tell us there are ${\delta,k>0}$
such that
\[
    \norm{ \exp{-t\mat{A}} \vecf{b}{\flowC{t}{\vec{x}_0}} }
    \leq k \, \exp{\rb{\alpha-\kappa}at} \norm{\vec{x}_0}^\alpha
    \mymbox{for all}
    \rb{t,\vec{x}_0} \in \Omega_\delta.
\]
The condition \eqref{eq028} prevents us from concluding that
$\norm{\exp{-t\mat{A}}\vecf{b}{\flowC{t}{\vec{x}_0}}}$ decays
exponentially.  This in turn means that we cannot, in general,
``flip the integral'' in \eqref{eq002}. Hence, we cannot find an
appropriate $\vec{y}_0$ and define the iterates as easily as with
the case of closely-spaced eigenvalues.

\begin{claim}
There are ${\delta,k>0}$ such that
\begin{equation} \label{eq033}
    \abs{ \exp{-a t} \scif{b}{1}{\flowC{t}{\vec{x}_0}} }
    \leq k \, \exp{\rb{\alpha-1}at} \norm{ \vec{x}_0 }^\alpha
    \mymbox{for all}
    \rb{t,\vec{x}_0} \in \Omega_\delta.
\end{equation}
\end{claim}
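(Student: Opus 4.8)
The plan is to bound the first component $\scif{b}{1}{\flowC{t}{\vec{x}_0}}$ using the structural bound \eqref{eq004} on $\vec{b}$ together with the basic decay estimate \eqref{eq031} on the flow, and then multiply by $\exp{-at}$. Concretely, first I would fix $\delta$ small enough that both \eqref{eq004} (with some $\ell_1>0$, i.e. $\norm{\vecf{b}{\vec{x}}}\leq\ell_1\norm{\vec{x}}^\alpha$ on $\ballO{\delta}$) and \eqref{eq031} (with some $\ell_2>0$) apply, and also small enough that $\norm{\flowC{t}{\vec{x}_0}}<\delta$ for all $(t,\vec{x}_0)\in\Omega_\delta$ — this last point is exactly what \eqref{eq031} provides once $\delta$ is chosen so that $\ell_2\,\delta<\delta$, using $\exp{at}\leq 1$. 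Then for $(t,\vec{x}_0)\in\Omega_\delta$ we have
\[
    \abs{\scif{b}{1}{\flowC{t}{\vec{x}_0}}}
    \leq \norm{\vecf{b}{\flowC{t}{\vec{x}_0}}}
    \leq \ell_1\,\norm{\flowC{t}{\vec{x}_0}}^\alpha
    \leq \ell_1\,\ell_2^\alpha\,\exp{\alpha a t}\,\norm{\vec{x}_0}^\alpha.
\]
Multiplying both sides by $\exp{-at}>0$ gives
\[
    \abs{\exp{-at}\scif{b}{1}{\flowC{t}{\vec{x}_0}}}
    \leq \ell_1\,\ell_2^\alpha\,\exp{\rb{\alpha-1}at}\,\norm{\vec{x}_0}^\alpha,
\]
which is \eqref{eq033} with $k:=\ell_1\ell_2^\alpha$. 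Since $\alpha>1$ and $a<0$, the exponent $(\alpha-1)a$ is negative, so this is genuinely a decay estimate — though that observation is not needed for the statement itself, only for how \eqref{eq033} will be used later.

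There is essentially no obstacle here; the only point requiring a little care is the bookkeeping of $\delta$: one must shrink $\delta$ a finite number of times so that all three facts (the two halves of \eqref{eq004} — only the first half, the bound on $\norm{\vec{b}}$, is actually needed — and \eqref{eq031}, plus the invariance-of-$\ballO{\delta}$ consequence of \eqref{eq031}) hold simultaneously, which is exactly the convention spelled out in the Remark following \eqref{eq004}. Note also that $\delta$ and $k$ here do not depend on $\sigma$: this estimate for the \emph{first} component does not need the $\sigma$-fudge that \eqref{eq032} needed, because $\norm{\exp{t\mat{A}}}=\exp{at}$ exactly (\eqref{eq030}), with no perturbation. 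I would therefore present this as a two-line computation, citing \eqref{eq004}, \eqref{eq031}, and \eqref{eq030}.
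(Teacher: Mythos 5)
Your argument is correct and is essentially the paper's own (one-line) proof: combine the bound \eqref{eq004} on $\vec{b}$ with the flow estimate \eqref{eq031} and multiply through by $\exp{-at}$, using \eqref{eq030} to note no $\sigma$-loss is needed. One minor bookkeeping slip: the condition ``$\ell_2\delta<\delta$'' cannot hold if $\ell_2\geq1$; you mean two different radii, i.e.\ choose the initial-condition radius $\delta$ small enough that $\ell_2\delta$ is below the radius on which \eqref{eq004} applies, so that $\flowC{t}{\vec{x}_0}$ stays in that ball.
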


\begin{proof}
It follows from \eqref{eq004} and \eqref{eq031}.
\end{proof}

Since ${\alpha>1}$, we see from \eqref{eq033} that we can ``flip the
integral'' in the first component of \eqref{eq029}.

\begin{claim}
There is a ${\delta>0}$ such that
\begin{equation} \label{eq034}
    \flowC{t}{\vec{x}_0}
    = \twobyonematrix{ \exp{a t} \psiif{1}{\vec{x}_0} - \integral{t}{\infty}{ \exp{a\rb{t-s}} \scif{b}{1}{\flowC{s}{\vec{x}_0}} }{s} }{ \exp{\kappa a t} x_{02} + \integral{0}{t}{ \exp{\kappa a\rb{t-s}} \scif{b}{2}{\flowC{s}{\vec{x}_0}} }{s} }
\end{equation}
for all ${\rb{t,\vec{x}_0}\in\Omega_\delta}$, where
\begin{equation} \label{eq035}
    \psiif{1}{\vec{x}_0} := x_{01} + \integral{0}{\infty}{ \exp{-a s} \scif{b}{1}{\flowC{s}{\vec{x}_0}} }{s}.
\end{equation}
\end{claim}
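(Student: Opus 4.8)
The plan is to apply the same ``flip the integral'' device that worked in Section~\ref{sec002}, but now only to the first component of the integral equation \eqref{eq029}, since \eqref{eq028} prevents it from working on the second. First I would fix $\delta>0$ small enough that the estimate \eqref{eq033} is valid; in particular this forces $\norm{\vec{x}_0}<\delta$ small enough that the flow $\flowC{s}{\vec{x}_0}$ exists for all $s\geq0$ and remains in the region where \eqref{eq004} and \eqref{eq031} hold. Writing out the first component of \eqref{eq029} and factoring $\exp{at}$ out of the convolution integral gives
\[
\flowCi{1}{t}{\vec{x}_0} = \exp{at}\left[ x_{01} + \integral{0}{t}{ \exp{-as}\scif{b}{1}{\flowC{s}{\vec{x}_0}} }{s} \right].
\]

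Next, since $\alpha>1$ and $a<0$, the exponent $(\alpha-1)a$ is negative, so \eqref{eq033} dominates the integrand $s\mapsto\exp{-as}\scif{b}{1}{\flowC{s}{\vec{x}_0}}$ by the exponentially decaying function $s\mapsto k\,\exp{(\alpha-1)as}\norm{\vec{x}_0}^\alpha$. By the Weierstrass $M$-test argument recalled in the background discussion, the improper integral $\integral{0}{\infty}{\exp{-as}\scif{b}{1}{\flowC{s}{\vec{x}_0}}}{s}$ converges, uniformly for $\vec{x}_0\in\ballO{\delta}$; hence $\psiif{1}{\vec{x}_0}$ in \eqref{eq035} is well defined, and so too is $\integral{t}{\infty}{\exp{-as}\scif{b}{1}{\flowC{s}{\vec{x}_0}}}{s}$ for each $t\geq0$. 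I would then split $\integral{0}{t}=\integral{0}{\infty}-\integral{t}{\infty}$ in the display above and reabsorb $\exp{at}$ into the tail integral, obtaining
\[
\flowCi{1}{t}{\vec{x}_0} = \exp{at}\psiif{1}{\vec{x}_0} - \integral{t}{\infty}{\exp{a(t-s)}\scif{b}{1}{\flowC{s}{\vec{x}_0}}}{s},
\]
which is precisely the first component of \eqref{eq034}. The second component of \eqref{eq034} is literally the second component of \eqref{eq029}, so nothing further is needed there.

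I do not expect any real obstacle: the only substantive point is the convergence of the improper integral defining $\psiif{1}{\vec{x}_0}$, and that is handed to us by \eqref{eq033} together with $\alpha>1$; everything else is bookkeeping about the domain $\Omega_\delta$. It is worth noting explicitly in the write-up \emph{why} only the scalar $\psi_1$ appears rather than a full vector transformation $\vecpsi$ as in \S\ref{sec002}: the analogous manipulation on the second component would require $\exp{-\kappa as}\scif{b}{2}{\flowC{s}{\vec{x}_0}}$ to decay exponentially, and \eqref{eq028} ($\kappa\geq\alpha$) is exactly the condition that destroys that decay.
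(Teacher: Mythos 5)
Your proposal is correct and follows exactly the paper's route: the claim is justified by the preceding observation that, since $\alpha>1$, the estimate \eqref{eq033} makes the integrand $\exp{-as}\scif{b}{1}{\flowC{s}{\vec{x}_0}}$ exponentially decaying, so the improper integral defining $\psiif{1}{\vec{x}_0}$ converges and the first-component integral in \eqref{eq029} can be ``flipped'' via $\integral{0}{t}{}{s}=\integral{0}{\infty}{}{s}-\integral{t}{\infty}{}{s}$, while the second component is left untouched. This is the same manipulation used to obtain \eqref{eq012} and \eqref{eq014} in \S\ref{sec002}, so nothing is missing.
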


\begin{claim}
There are ${\delta,k>0}$ such that
\begin{equation} \label{eq036}
    \abs{ \psiif{1}{\vec{x}_0} - x_{01} } \leq k \norm{\vec{x}_0}^\alpha
    \mymbox{for all}
    \vec{x}_0 \in \ballO{\delta}.
\end{equation}
\end{claim}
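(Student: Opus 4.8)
The plan is to read the bound off directly from the definition \eqref{eq035} together with the exponential estimate \eqref{eq033}, exactly as was done for the analogous statement \eqref{eq015} in the closely-spaced case. Subtracting $x_{01}$ from both sides of \eqref{eq035} gives
\[
    \psiif{1}{\vec{x}_0} - x_{01} = \integral{0}{\infty}{ \exp{-as} \scif{b}{1}{\flowC{s}{\vec{x}_0}} }{s},
\]
so the whole task is to bound this improper integral. First I would fix ${\delta>0}$ small enough that \eqref{eq033} applies (the $\delta$ appearing there works), so that for all ${s\geq 0}$ and ${\vec{x}_0\in\ballO{\delta}}$ the integrand is dominated in absolute value by ${k\,\exp{\rb{\alpha-1}as}\norm{\vec{x}_0}^\alpha}$.

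Next I would use that ${\alpha>1}$ and ${a<0}$, so the exponent ${\rb{\alpha-1}a}$ is strictly negative and the dominating function decays exponentially in $s$; by the discussion of such improper integrals in the Background subsection (Weierstrass $M$-test with ${M(s)=k\,\exp{\rb{\alpha-1}as}}$) the integral in \eqref{eq035} converges absolutely and uniformly on $\ballO{\delta}$, which also justifies moving the absolute value inside. Integrating the dominating function,
\[
    \abs{ \psiif{1}{\vec{x}_0} - x_{01} }
    \leq \norm{\vec{x}_0}^\alpha \integral{0}{\infty}{ k\,\exp{\rb{\alpha-1}as} }{s}
    = \frac{k}{\rb{1-\alpha}a}\,\norm{\vec{x}_0}^\alpha,
\]
and ${\rb{1-\alpha}a>0}$ since ${\alpha>1}$ and ${a<0}$, so this last constant is positive. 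Renaming ${k/\rb{\rb{1-\alpha}a}}$ as $k$ yields \eqref{eq036}.

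I do not expect a genuine obstacle here: the only delicate point is the convergence of the improper integral defining $\psiif{1}{\vec{x}_0}$, and that is precisely what ${\alpha>1}$ buys us — indeed it is the same fact that licensed ``flipping the integral'' to pass from \eqref{eq029} to \eqref{eq034}. Accordingly, the write-up can be as terse as ``It follows from \eqref{eq033}, \eqref{eq035}, the uniform convergence of the integral, and the fact that ${\alpha>1}$.''
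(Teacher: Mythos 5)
Your proposal is correct and matches the paper's own (one-line) proof, which simply cites \eqref{eq033} and \eqref{eq035}: you bound the integral in \eqref{eq035} by the exponential estimate \eqref{eq033} and use ${\rb{\alpha-1}a<0}$ to integrate. The only difference is that you spell out the convergence and the constant ${k/\rb{\rb{1-\alpha}a}}$ explicitly, which the paper leaves implicit.
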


\begin{proof}
It follows from \eqref{eq033} and \eqref{eq035}.
\end{proof}

\subsubsection{First Group of Iterates}

The iterates, which will be denoted by
$\setb{\iterateD{m}{t}{\vec{y}_0}}_{m=1}^\infty$, must be split
into two groups. Define
\[
    p := \floor{ \frac{\kappa-\alpha}{\beta} } + 2,
\]
which is such that the $p^\text{th}$, but not $\rb{p-1}^\text{th}$,
iterate will be close enough to $\flowC{t}{\vec{x}_0}$ so that we
can perform another ``trick'' in the second component of
\eqref{eq034}. It can be shown
\begin{equation} \label{eq037}
    p \in \setb{2,3,\ldots}
    \mymbox{and}
    \alpha + \rb{p-2} \beta \leq \kappa < \alpha + \rb{p-1} \beta.
\end{equation}
If $\norm{\vec{y}_0}$ is sufficiently small, we take as the first iterate
\begin{subequations} \label{eq038}
\begin{equation} \label{eq038a}
    \iterateD{1}{t}{y_{01}} := \twobyonematrix{ \exp{a t} y_{01} }{0}
\end{equation}
and (if ${p>2}$) define recursively
\begin{equation} \label{eq038b}
    \iterateD{m+1}{t}{y_{01}} := \twobyonematrix{ \exp{a t} y_{01} - \integral{t}{\infty}{ \exp{a\rb{t-s}} \scif{b}{1}{ \iterateD{m}{s}{y_{01}} } }{s} }{ \integral{0}{t}{ \exp{\kappa a\rb{t-s}} \scif{b}{2}{ \iterateD{m}{s}{y_{01}} } }{s} }
    \quad
    \rb{ 1 \leq m \leq p-2 }.
\end{equation}
\end{subequations}
To connect these iterates with the flow, choose ${y_{01}:=\psiif{1}{\vec{x}_0}}$ with
$\norm{\vec{x}_0}$ sufficiently small.

\begin{rem}
Since the first group of iterates does not depend on $y_{02}$, we
write $\iterateD{m}{t}{y_{01}}$ instead of
$\iterateD{m}{t}{\vec{y}_0}$. This is necessary since
$\psiif{2}{\vec{x}_0}$ will be defined in terms of
$\iterateD{p-1}{t}{\psiif{1}{\vec{x}_0}}$. However, if we need to
refer to all the iterates at once we will use, for simplicity, the
notation $\setb{\iterateD{m}{t}{\vec{y}_0}}_{m=1}^\infty$.
\end{rem}

\subsubsection{Existence, Decay Rate, and Closeness to the Flow of the First Group}

\begin{prop}
Consider the transformation defined by \eqref{eq035} and the first
group of iterates defined by \eqref{eq038}. Then, there is a
${\delta>0}$ (independent of $m$) such that if
${\norm{\vec{x}_0}<\delta}$ then $\iterateD{m}{t}{y_{01}}$ exists
for each ${m\in\setb{1,\ldots,p-1}}$, where
${y_{01}:=\psiif{1}{\vec{x}_0}}$. Moreover, there is a ${k>0}$
(independent of $m$) such that
\begin{equation} \label{eq039}
    \norm{ \iterateD{m}{t}{y_{01}} } \leq k \, \exp{at} \norm{ \vec{x}_0 }
    \mymbox{for all}
    \rb{t,\vec{x}_0}\in\Omega_\delta, ~ m\in\setb{1,\ldots,p-1}.
\end{equation}
\end{prop}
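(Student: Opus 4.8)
The plan is to argue by induction on $m$, exactly paralleling the proof of Proposition~\ref{prop001} in the closely-spaced case, but accommodating the two-component structure of the iterates \eqref{eq038}. First I would fix, once and for all, constants independent of $m$: from \eqref{eq004} choose $\delta_1,\ell_1,\ell_2>0$ with $\norm{\vecf{b}{\vec{x}}}\leq\ell_1\norm{\vec{x}}^\alpha$ on $\ballO{\delta_1}$ (we only really need the bound on $\scif{b}{1}{}$ and $\scif{b}{2}{}$ separately, which follows); from \eqref{eq031} choose $\delta_2,\ell_3>0$ so the flow—and hence $y_{01}=\psiif{1}{\vec{x}_0}$ via \eqref{eq036}—exists and satisfies $\abs{y_{01}}\leq\ell_3\norm{\vec{x}_0}$ when $\norm{\vec{x}_0}<\delta_2$; and from \eqref{eq030} the exact operator norms $\norm{\exp{t\mat{A}}}=\exp{at}$, $\norm{\exp{-t\mat{A}}}=\exp{-\kappa a t}$ for $t\geq 0$. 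Then I would pick $k>\ell_3$ large enough and $\delta\in(0,\min\{\delta_1/k,\delta_2\})$ small enough to close the induction, with the precise algebraic relation between $k$, $\ell_1$, $\delta_1$ pinned down after the inductive step is written out (as in \eqref{eq019}).

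The base case $m=1$ is immediate: $\iterateD{1}{t}{y_{01}}=(\exp{at}y_{01},0)^T$ exists and has norm $\leq\ell_3\exp{at}\norm{\vec{x}_0}\leq k\exp{at}\norm{\vec{x}_0}$. For the inductive step, assume $\iterateD{m}{t}{y_{01}}$ exists with $\norm{\iterateD{m}{t}{y_{01}}}\leq k\exp{at}\norm{\vec{x}_0}<\delta_1$ (using $\norm{\vec{x}_0}<\delta_1/k$); then the integrands in \eqref{eq038b} are well-defined and $\iterateD{m+1}{t}{y_{01}}$ exists. For the bound, estimate each component of \eqref{eq038b} separately. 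The first component is $\exp{at}y_{01}-\integral{t}{\infty}{\exp{a(t-s)}\scif{b}{1}{\iterateD{m}{s}{y_{01}}}}{s}$; bounding $\abs{\scif{b}{1}{\iterateD{m}{s}{y_{01}}}}\leq\ell_1 k^\alpha\exp{\alpha a s}\norm{\vec{x}_0}^\alpha$ and integrating $\integral{t}{\infty}{\exp{a(t-s)}\exp{\alpha a s}}{s}=\exp{\alpha a t}/(-(\alpha-1)a)$ (valid since $\alpha>1$, $a<0$) gives a contribution $\leq[\ell_3+\ell_1 k^\alpha\exp{(\alpha-1)at}\norm{\vec{x}_0}^{\alpha-1}/((\alpha-1)(-a))]\exp{at}\norm{\vec{x}_0}$, and $\exp{(\alpha-1)at}\leq 1$ with $\norm{\vec{x}_0}<\delta_1/k$ lets one absorb the second term into a constant. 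The second component is $\integral{0}{t}{\exp{\kappa a(t-s)}\scif{b}{2}{\iterateD{m}{s}{y_{01}}}}{s}$; here, crucially, the integral runs from $0$ to $t$ (not flipped), and $\integral{0}{t}{\exp{\kappa a(t-s)}\exp{\alpha a s}}{s}=(\exp{\alpha a t}-\exp{\kappa a t})/((\alpha-\kappa)a)\leq\exp{\alpha a t}/((\kappa-\alpha)(-a))$ when $\kappa>\alpha$ (and $\leq t\exp{\alpha a t}\leq C\exp{(\alpha-\sigma')at}$ in the borderline $\kappa=\alpha$ case), so this component is also $\bigO{\exp{\alpha a t}\norm{\vec{x}_0}^\alpha}$, which is $\leq\exp{at}\norm{\vec{x}_0}$ times a small factor since $\alpha\geq\alpha$ and $\norm{\vec{x}_0}$ is small. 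Adding the two component bounds and choosing $k$ to satisfy the analogue of \eqref{eq019} yields $\norm{\iterateD{m+1}{t}{y_{01}}}\leq k\exp{at}\norm{\vec{x}_0}$, completing the induction for $m\in\{1,\ldots,p-1\}$.

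The main obstacle—and the reason the statement is restricted to $m\leq p-1$ and why the iterates must later be regrouped—is the second component: its defining integral is \emph{not} flipped to $\integral{t}{\infty}{}$, so its decay rate is governed by $\min\{\kappa,\alpha+(m-1)\beta\}$ rather than improving indefinitely, and the factor $\exp{\kappa a t}$ out front never helps beyond $\exp{\alpha a t}$ once the source term decays at rate $\alpha a$. One must verify that for all $m\leq p-1$ the accumulated decay exponent $\alpha+(m-2)\beta$ (roughly) stays $\leq\kappa$, which is exactly \eqref{eq037}, so the second component is always $\bigO{\exp{\alpha a t}\norm{\vec{x}_0}^\alpha}=\bigO{\exp{at}\norm{\vec{x}_0}}$ and never dominates the $\exp{at}$ term contributed by the first component. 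A secondary bookkeeping point is ensuring $k$ and $\delta$ are genuinely independent of $m$: since $m$ ranges over the \emph{finite} set $\{1,\ldots,p-1\}$ this is easier than in Proposition~\ref{prop001}, but I would still phrase the constant-chasing uniformly so the same $\delta$ works throughout, mirroring the structure already established in the closely-spaced proof.
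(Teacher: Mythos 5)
Your argument is correct and is essentially the proof the paper intends: the paper omits it, stating only that it is ``similar to that of Proposition~\ref{prop001}'', and your induction is exactly that adaptation to the two-component structure of \eqref{eq038} (flipped integral in the first component, unflipped in the second, the elementary exponential integral estimates, the base case $\iterateD{1}{t}{y_{01}}=(\exp{at}y_{01},0)^T$, and the borderline case $\kappa=\alpha$ handled via $t\,\exp{\kappa at}\leq C\,\exp{(\alpha-\sigma')at}$ with $\alpha-\sigma'>1$, which keeps \eqref{eq039} free of any $\sigma$). The only inessential point is the appeal to \eqref{eq037} in your closing discussion: the bound $\bigO{\exp{\alpha at}\norm{\vec{x}_0}^{\alpha}}$ on the second component follows directly from the induction hypothesis together with $\alpha>1$, so no accumulated-exponent check is needed for this proposition; the restriction $m\in\setb{1,\ldots,p-1}$ merely reflects that the first group of iterates is only defined up to index $p-1$, with \eqref{eq037} becoming relevant only for the closeness estimate of Theorem~\ref{thm002}.
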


\begin{proof}
The proof is similar to that of Proposition~\ref{prop001} and omitted in the interest of space.
\end{proof}

\begin{thm} \label{thm002}
Consider the transformation defined by \eqref{eq035} and the first
group of iterates defined by \eqref{eq038}. Let ${\sigma>0}$ be
sufficiently small. Then, there exist ${\delta>0}$ and
${\seq{k}{m}{1}{p-1}\subset\Rd{}_+}$ such that
\begin{equation} \label{eq040}
    \norm{ \flowC{t}{\vec{x}_0} - \iterateD{m}{t}{y_{01}} } \leq k_m \, \exp{ \sqb{ \alpha + \rb{ m - 1 } \beta } \sqb{ a + \sigma } t } \norm{ \vec{x}_0 }
\end{equation}
for all ${\rb{t,\vec{x}_0}\in\Omega_\delta}$ and ${m\in\setb{1,\ldots,p-1}}$,
where ${y_{01}:=\psiif{1}{\vec{x}_0}}$.
\end{thm}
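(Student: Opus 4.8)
The plan is to prove \eqref{eq040} by induction on $m$, closely mirroring the structure of the proof of Theorem~\ref{thm001}, but tracking the two components of $\flowC{t}{\vec{x}_0}$ separately because the second component cannot be ``flipped'' and must instead be estimated from the forward integral. First I would fix $\sigma>0$ small enough that all the relevant sign conditions hold (in particular $\alpha a+\sigma<0$ and $a+\sigma<0$, and enough room so that the exponents $[\alpha+(m-1)\beta](a+\sigma)$ stay negative for $m\le p-1$; note by \eqref{eq037} we only need finitely many values of $m$, so no uniformity-in-$m$ difficulty arises here). Then I would collect the standard constants exactly as in Theorem~\ref{thm001}: $\ell_1,\ell_2$ from \eqref{eq004}; $\ell_3$ from \eqref{eq031}; $\ell_4$ from \eqref{eq039}; and a constant bounding $\norm{\exp{-t\mat{A}}}$ and $\norm{\exp{t\mat{A}}}$ via \eqref{eq030}. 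Choosing $\delta$ as the minimum of the finitely many thresholds ensures every estimate applies and that the segment joining $\flowC{s}{\vec{x}_0}$ and $\iterateD{m}{s}{y_{01}}$ stays in the ball where \eqref{eq004} holds.

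For the base case $m=1$ I would compare $\flowC{t}{\vec{x}_0}$ with $\iterateD{1}{t}{y_{01}}=(\exp{at}y_{01},0)^T$ componentwise using \eqref{eq034}: the first component is the tail integral $\int_t^\infty\exp{a(t-s)}b_1(\flowC{s}{\vec{x}_0})\,ds$, which by \eqref{eq033} and \eqref{eq030} is $\bigO{\exp{\alpha(a+\sigma)t}\norm{\vec{x}_0}}$ (using \eqref{eq036} to write $\abs{\psiif{1}{\vec{x}_0}-x_{01}}$ into the $y_{01}$ term harmlessly); the second component is bounded directly by \eqref{eq032}, giving $\bigO{\exp{(\alpha a+\sigma)t}\norm{\vec{x}_0}}$. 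Both are dominated by $\exp{\alpha(a+\sigma)t}\norm{\vec{x}_0}$, so $k_1$ exists. For the inductive step, assuming \eqref{eq040} for $m$ (with $m+1\le p-1$), I would first apply the Mean Value Theorem together with \eqref{eq004}, \eqref{eq031}, and \eqref{eq039} to get
\[
    \norm{ \vecf{b}{\flowC{s}{\vec{x}_0}} - \vecf{b}{\iterateD{m}{s}{y_{01}}} }
    \leq k_m \ell_2 \rb{\ell_3+\ell_4}^\beta \exp{\sqb{\alpha+m\beta}\rb{a+\sigma}s} \norm{\vec{x}_0},
\]
and then feed this into the componentwise difference of \eqref{eq034} and \eqref{eq038b}: the first component uses the backward integral $\int_t^\infty\exp{a(t-s)}(\cdots)\,ds$, and the second component the forward integral $\int_0^t\exp{\kappa a(t-s)}(\cdots)\,ds$.

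The step I expect to be the main obstacle is the second-component forward integral in the inductive step. Unlike a tail integral, $\int_0^t\exp{\kappa a(t-s)}\exp{[\alpha+m\beta](a+\sigma)s}\,ds$ produces an extra $\exp{\kappa a t}$ term from the lower limit, and one must check that the \emph{combined} bound is still $\bigO{\exp{[\alpha+m\beta](a+\sigma)t}}$, i.e. that $\kappa a \le [\alpha+m\beta](a+\sigma)$ for the range of $m$ in question — this is exactly where the definition of $p$ and the inequality $\alpha+(p-2)\beta\le\kappa$ from \eqref{eq037} are used, and where $\sigma$ small is essential (we need $[\alpha+m\beta](a+\sigma)$ to stay above $\kappa a$, which holds with room to spare since $\alpha+m\beta<\kappa$ forces $[\alpha+m\beta]a>\kappa a$, and then small $\sigma$ preserves the strict inequality). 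Evaluating the integral and bounding $\exp{(\alpha-1)\cdots}$-type factors by $1$ using $\norm{\vec{x}_0}<\delta$ then yields the explicit $k_{m+1}$, positive by the sign conditions, completing the induction.
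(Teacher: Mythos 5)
Your proposal is correct and follows essentially the approach the paper intends: the published proof is omitted with the remark that it is similar to that of Theorem~\ref{thm001}, and your argument is exactly that induction adapted to the componentwise integral equation \eqref{eq034}, with the crucial point correctly isolated, namely that $\kappa a < \sqb{\alpha+m\beta}\rb{a+\sigma}$ for $m\leq p-2$ (via \eqref{eq037} and ${\sigma>0}$, which is what creates the strict inequality even when ${\alpha+(p-2)\beta=\kappa}$) controls the forward integral in the second component. Note only that the same inequality is also what absorbs the uncancelled term $\exp{\kappa a t}x_{02}$ present in \emph{every} inductive step, not just the base case; this is precisely why the bound carries $\norm{\vec{x}_0}$ rather than $\norm{\vec{x}_0}^{\alpha+\rb{m-1}\beta}$, consistent with the remark following the theorem.
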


\begin{proof}
The proof is similar to that of Theorem~\ref{thm001} and omitted in
the interest of space.
\end{proof}

\begin{rem}
Notice in \eqref{eq040} we have $\norm{\vec{x}_0}$ as opposed to
$\norm{\vec{x}_0}^{\alpha+\rb{m-1}\beta}$ which we had in
\eqref{eq020}. This is the result of $\exp{\kappa at}x_{02}$ in
\eqref{eq034} having no corresponding term in \eqref{eq038} with
which to cancel. Note that we cannot merely insert $\exp{\kappa
at}x_{02}$ into the definition of the second component of the
iterates since we need the iterates to be defined independently of
the initial condition $\vec{x}_0$.
\end{rem}

\subsubsection{Second Component of the Transformation}

Let ${y_{01}:=\psiif{1}{\vec{x}_0}}$, where ${\norm{\vec{x}_0}}$ is
sufficiently small. Consider the integral equation \eqref{eq034} for
which the first integral has been ``flipped'' but the second
integral has not. We will manipulate the expression for the second
component of the flow so that we can ``flip some integral'' in some
way. Write
\begin{align}
    \flowCi{2}{t}{\vec{x}_0}
    &= \exp{\kappa a t} \setb{ x_{02} + \integral{0}{t}{ \exp{-\kappa a s} \sqb{ \scif{b}{2}{ \flowC{s}{\vec{x}_0} } - \scif{b}{2}{ \iterateD{p-1}{s}{y_{01}} } } }{s} } \notag\\
    &\quad+ \exp{\kappa a t} \integral{0}{t}{ \exp{-\kappa a s} \scif{b}{2}{ \iterateD{p-1}{s}{y_{01}} } }{s}. \label{eq041}
\end{align}
Before we ``flip the first integral'' in \eqref{eq041}, we need to
check the decay rate of the integrand.

\begin{claim}
Let ${\sigma>0}$ be sufficiently small. There are ${\delta,k>0}$
such that
\begin{equation} \label{eq042}
    \abs{ \exp{-\kappa a t} \sqb{ \scif{b}{2}{ \flowC{t}{\vec{x}_0} } - \scif{b}{2}{ \iterateD{p-1}{t}{y_{01}} } } }
    \leq k \, \exp{\sqb{\alpha+\rb{p-1}\beta-\kappa}\sqb{a+\sigma}t} \norm{\vec{x}_0}^{\beta+1}
\end{equation}
for all ${\rb{t,\vec{x}_0}\in\Omega_\delta}$, where ${y_{01}:=\psiif{1}{\vec{x}_0}}$.
\end{claim}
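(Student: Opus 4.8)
The plan is to estimate the bracketed difference $\scif{b}{2}{\flowC{t}{\vec{x}_0}} - \scif{b}{2}{\iterateD{p-1}{t}{y_{01}}}$ by the Mean Value Theorem and then multiply by $\exp{-\kappa a t}$. First I would invoke \eqref{eq004} to obtain a constant $\ell_2>0$ and a $\delta_2>0$ with $\norm{\mat{D}\vecf{b}{\vec{x}}}\leq\ell_2\norm{\vec{x}}^\beta$ on $\ballO{\delta_2}$. Next, using the decay estimate \eqref{eq031} for the flow and the decay estimate \eqref{eq039} for the $(p-1)^\text{th}$ iterate, I would note that (for $\norm{\vec{x}_0}$ small enough, depending on $\delta_2$ and the constants in \eqref{eq031} and \eqref{eq039}) every point on the segment joining $\flowC{s}{\vec{x}_0}$ and $\iterateD{p-1}{s}{y_{01}}$ lies in $\ballO{\delta_2}$ and is bounded in norm by a constant times $\exp{as}\norm{\vec{x}_0}$. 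The Mean Value Theorem then gives
\[
    \abs{ \scif{b}{2}{\flowC{t}{\vec{x}_0}} - \scif{b}{2}{\iterateD{p-1}{t}{y_{01}}} }
    \leq \ell_2 \rb{ C \exp{at}\norm{\vec{x}_0} }^\beta \norm{ \flowC{t}{\vec{x}_0} - \iterateD{p-1}{t}{y_{01}} }
\]
for an appropriate constant $C>0$.

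The key input for the remaining factor is Theorem~\ref{thm002} with $m=p-1$, which gives
\[
    \norm{ \flowC{t}{\vec{x}_0} - \iterateD{p-1}{t}{y_{01}} }
    \leq k_{p-1} \exp{ \sqb{ \alpha + \rb{p-2}\beta } \sqb{a+\sigma} t } \norm{\vec{x}_0}.
\]
Combining the two displays and pulling out $\exp{-\kappa a t}$, the exponential rate becomes $-\kappa a + \beta a + \sqb{\alpha+(p-2)\beta}(a+\sigma)$; since $a<0$ I would replace the bare $\beta a$ and the $-\kappa a$ by $\beta(a+\sigma)$ and $-\kappa(a+\sigma)$ respectively (at the cost of enlarging the constant, legitimate because $\sigma>0$ and these terms have the ``wrong'' sign for that substitution only after care — more precisely one writes $-\kappa a t \le -\kappa(a+\sigma)t + \kappa\sigma t$, so it is cleaner to first write every exponent with $a+\sigma$ from the start by using the slightly weaker forms of \eqref{eq031} and \eqref{eq039} with an extra $\sigma$), yielding total rate $\sqb{\alpha+(p-1)\beta-\kappa}(a+\sigma)$. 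The power of $\norm{\vec{x}_0}$ is $\beta\cdot 1 + 1 = \beta+1$, matching \eqref{eq042}. Finally I would set $\delta$ to be the minimum of all the radii appearing above and absorb all constants into a single $k>0$.

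The main obstacle is bookkeeping with the $\sigma$'s: one must make sure that the single $\sigma$ in the statement genuinely dominates all the separate small losses incurred in replacing $a$ by $a+\sigma$ in the flow estimate, the iterate estimate, and Theorem~\ref{thm002}, and in absorbing the $\exp{-\kappa a t}$ factor. The clean way is to fix, at the outset, a slightly smaller $\sigma'\in(0,\sigma)$, apply all earlier estimates with $\sigma'$ in place of $\sigma$, and observe that $\sqb{\alpha+(p-1)\beta-\kappa}(a+\sigma') = \sqb{\alpha+(p-1)\beta-\kappa}(a+\sigma) + \sqb{\alpha+(p-1)\beta-\kappa}(\sigma'-\sigma)$; since $\alpha+(p-1)\beta-\kappa>0$ by \eqref{eq037} and $\sigma'-\sigma<0$, the error exponent is negative, so the extra factor is bounded by $1$ and can be dropped. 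A secondary (purely notational) point is that $y_{01}=\psiif{1}{\vec{x}_0}$ must be controlled by $\norm{\vec{x}_0}$ via \eqref{eq036}, which is needed to ensure $\iterateD{p-1}{t}{y_{01}}$ exists and obeys \eqref{eq039}; this is routine given that $\alpha>1$.
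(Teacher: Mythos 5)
Your proposal is correct and follows essentially the same route as the paper's proof, which simply cites \eqref{eq004}, \eqref{eq031}, \eqref{eq039}, and \eqref{eq040} together with the Mean Value Theorem. One small refinement to your $\sigma$-bookkeeping: since the rate you actually collect is $[\alpha+(p-1)\beta-\kappa]a+[\alpha+(p-2)\beta]\sigma'$, the auxiliary $\sigma'$ used in Theorem~\ref{thm002} must satisfy $[\alpha+(p-2)\beta]\sigma'\leq[\alpha+(p-1)\beta-\kappa]\sigma$ (possible precisely because of the strict inequality in \eqref{eq037}), not merely $\sigma'<\sigma$.
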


\begin{proof}
It follows from \eqref{eq004}, \eqref{eq031}, \eqref{eq039}, and
\eqref{eq040} in conjunction with the Mean Value Theorem.
\end{proof}

\begin{claim}
There is a ${\delta>0}$ such that
\begin{align}
    \flowCi{2}{t}{\vec{x}_0}
    &= \sqb{ \exp{\kappa a t} \psiif{2}{\vec{x}_0} + \integral{0}{t}{ \exp{\kappa a\rb{t-s}} \scif{b}{2}{ \iterateD{p-1}{s}{\psiif{1}{\vec{x}_0}} } }{s} } \notag\\
    &\quad- \integral{t}{\infty}{ \exp{\kappa a\rb{t-s}} \sqb{ \scif{b}{2}{ \flowC{s}{\vec{x}_0} } - \scif{b}{2}{ \iterateD{p-1}{s}{\psiif{1}{\vec{x}_0}} } } }{s} \label{eq043}
\end{align}
for all ${\rb{t,\vec{x}_0}\in\Omega_\delta}$, where
\begin{equation} \label{eq044}
    \psiif{2}{\vec{x}_0}
    := x_{02} + \integral{0}{\infty}{ \exp{-\kappa a s} \sqb{ \scif{b}{2}{ \flowC{s}{\vec{x}_0} } - \scif{b}{2}{ \iterateD{p-1}{s}{\psiif{1}{\vec{x}_0}} } } }{s}.
\end{equation}
\end{claim}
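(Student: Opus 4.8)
The plan is to obtain \eqref{eq043}--\eqref{eq044} by ``flipping the first integral'' on the right-hand side of \eqref{eq041}, in direct analogy with the passage from \eqref{eq029} to \eqref{eq034} for the first component. The essential preliminary point is that the integrand
\[
    \exp{-\kappa a s} \sqb{ \scif{b}{2}{ \flowC{s}{\vec{x}_0} } - \scif{b}{2}{ \iterateD{p-1}{s}{\psiif{1}{\vec{x}_0}} } }
\]
decays exponentially in $s$: by \eqref{eq042} its modulus is at most $k\,\exp{\sqb{\alpha+\rb{p-1}\beta-\kappa}\sqb{a+\sigma}s}\norm{\vec{x}_0}^{\beta+1}$, and by \eqref{eq037} we have $\alpha+\rb{p-1}\beta-\kappa>0$, so the exponent is negative once $\sigma\in\rb{0,-a}$ is small enough. (This is precisely the purpose of the choice $p:=\floor{\rb{\kappa-\alpha}/\beta}+2$.) Hence, by the standard Weierstrass $M$-test argument for improper integrals of exponentially decaying functions, the integral $\integral{0}{\infty}{ \exp{-\kappa a s}\sqb{ \scif{b}{2}{ \flowC{s}{\vec{x}_0} } - \scif{b}{2}{ \iterateD{p-1}{s}{\psiif{1}{\vec{x}_0}} } } }{s}$ converges, uniformly in $\vec{x}_0$ on a suitable ball $\ballO{\delta}$, so $\psiif{2}{\vec{x}_0}$ in \eqref{eq044} is well defined there.

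Next I would fix $\delta>0$ small enough that, simultaneously, $\psiif{1}{\vec{x}_0}$ and $\iterateD{p-1}{t}{\psiif{1}{\vec{x}_0}}$ exist and the estimates \eqref{eq031}, \eqref{eq039}, and \eqref{eq042} all apply on $\Omega_\delta$. For such $\vec{x}_0$, write $\integral{0}{t}{}{} = \integral{0}{\infty}{}{} - \integral{t}{\infty}{}{}$ in the integral on the first line of \eqref{eq041}; then, using the definition \eqref{eq044} of $\psiif{2}{\vec{x}_0}$ and absorbing the outer factor $\exp{\kappa a t}$ into the tail integral, the first line of \eqref{eq041} becomes
\[
    \exp{\kappa a t} \psiif{2}{\vec{x}_0} - \integral{t}{\infty}{ \exp{\kappa a\rb{t-s}} \sqb{ \scif{b}{2}{ \flowC{s}{\vec{x}_0} } - \scif{b}{2}{ \iterateD{p-1}{s}{\psiif{1}{\vec{x}_0}} } } }{s}.
\]
The second line of \eqref{eq041}, after pulling $\exp{\kappa a t}$ through its integral, is exactly $\integral{0}{t}{ \exp{\kappa a\rb{t-s}} \scif{b}{2}{ \iterateD{p-1}{s}{\psiif{1}{\vec{x}_0}} } }{s}$. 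Adding the two contributions reproduces \eqref{eq043}.

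Once \eqref{eq042} is in hand the argument is little more than bookkeeping, so I do not expect a genuine obstacle; the points needing care are the quantitative ones already flagged --- taking $\sigma$ small enough (in particular $\sigma<-a$) that the integrand truly decays, and shrinking $\delta$ to accommodate the finitely many earlier estimates invoked. It is also worth recording, for later use, that the convergence of the defining integral is uniform on $\Omega_\delta$, so that $\psi_2$ is continuous on $\ballO{\delta}$.
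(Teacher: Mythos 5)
Your proposal is correct and follows exactly the route the paper intends: the paper's proof simply cites \eqref{eq034}, \eqref{eq037}, \eqref{eq041}, \eqref{eq042}, and \eqref{eq044}, and your argument is the natural expansion of that citation --- use \eqref{eq042} together with \eqref{eq037} (which gives $\alpha+\rb{p-1}\beta-\kappa>0$) to justify convergence of the improper integral defining $\psiif{2}{\vec{x}_0}$, then split $\integral{0}{t}{}{}=\integral{0}{\infty}{}{}-\integral{t}{\infty}{}{}$ in \eqref{eq041} and absorb the factor $\exp{\kappa at}$. Your bookkeeping of the small-$\sigma$ requirement and the choice of $\delta$ is also consistent with the paper's conventions.
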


\begin{proof}
It follows from \eqref{eq034}, \eqref{eq037}, \eqref{eq041},
\eqref{eq042}, and \eqref{eq044}.
\end{proof}

\begin{claim}
There are ${\delta,k>0}$ such that
\begin{equation} \label{eq045}
    \abs{ \psiif{2}{\vec{x}_0} - x_{02} }
    \leq k \norm{\vec{x}_0}^{\beta+1}
    \mymbox{for all}
    \vec{x}_0 \in \ballO{\delta}.
\end{equation}
\end{claim}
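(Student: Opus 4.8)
The plan is to bound $\abs{\psiif{2}{\vec{x}_0} - x_{02}}$ directly from its defining formula \eqref{eq044}, which expresses the difference as the improper integral $\integral{0}{\infty}{ \exp{-\kappa a s} \sqb{ \scif{b}{2}{ \flowC{s}{\vec{x}_0} } - \scif{b}{2}{ \iterateD{p-1}{s}{\psiif{1}{\vec{x}_0}} } } }{s}$. The key observation is that the integrand is exactly the quantity already estimated in \eqref{eq042}. So the whole proof reduces to invoking that bound and integrating.

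First I would fix $\sigma>0$ small enough so that \eqref{eq042} applies, and take $\delta>0$ small enough that all the cited estimates (\eqref{eq031}, \eqref{eq039}, \eqref{eq040}, \eqref{eq042}) hold simultaneously on $\Omega_\delta$, and also small enough that $\abs{\psiif{1}{\vec{x}_0} - x_{01}}$ is controlled via \eqref{eq036} so that $\norm{\vec{x}_0}$ being small forces $y_{01}:=\psiif{1}{\vec{x}_0}$ to lie in the relevant ball. Then, for $\vec{x}_0 \in \ballO{\delta}$, I would write
\[
    \abs{ \psiif{2}{\vec{x}_0} - x_{02} }
    \leq \integral{0}{\infty}{ \abs{ \exp{-\kappa a s} \sqb{ \scif{b}{2}{ \flowC{s}{\vec{x}_0} } - \scif{b}{2}{ \iterateD{p-1}{s}{y_{01}} } } } }{s}
    \leq k_1 \norm{\vec{x}_0}^{\beta+1} \integral{0}{\infty}{ \exp{ \sqb{\alpha+\rb{p-1}\beta-\kappa}\sqb{a+\sigma} s } }{s},
\]
using \eqref{eq042} in the second step. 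The exponent $\sqb{\alpha+\rb{p-1}\beta-\kappa}\sqb{a+\sigma}$ is strictly negative: by the right inequality in \eqref{eq037} we have $\alpha+\rb{p-1}\beta-\kappa > 0$, and $a+\sigma < 0$ for $\sigma$ small since $a<0$. Hence the integral converges and equals $\frac{-1}{\sqb{\alpha+\rb{p-1}\beta-\kappa}\sqb{a+\sigma}}$, giving \eqref{eq045} with $k := \frac{-k_1}{\sqb{\alpha+\rb{p-1}\beta-\kappa}\sqb{a+\sigma}} > 0$.

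Honestly, there is no serious obstacle here: the hard analytic work — the Mean Value Theorem estimate combining the decay rates of $\flowC{t}{\vec{x}_0}$, the first group of iterates, and their closeness — was already done in establishing \eqref{eq042}, and the sign condition making the integral converge was already recorded in \eqref{eq037}. The only point requiring a word of care is making sure the single $\delta$ chosen is small enough that $y_{01} = \psiif{1}{\vec{x}_0}$ stays in every ball on which the prerequisite estimates were stated; this is routine since $\psiif{1}$ is near-identity by \eqref{eq036}. So the proof is essentially a one-line application of \eqref{eq042} followed by evaluating a convergent exponential integral.
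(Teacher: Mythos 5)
Your proposal is correct and follows the same route as the paper, whose proof simply cites \eqref{eq042} and \eqref{eq044}: bound the integrand in the definition \eqref{eq044} by the estimate \eqref{eq042} and integrate, using \eqref{eq037} to see the exponent $\sqb{\alpha+\rb{p-1}\beta-\kappa}\sqb{a+\sigma}$ is negative so the integral converges. Your filled-in details (choice of a common $\delta$, the explicit constant $k$) are exactly the routine steps the paper leaves implicit.
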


\begin{proof}
It follows from \eqref{eq042} and \eqref{eq044}.
\end{proof}

Notice the definition of $\psiif{2}{\vec{x}_0}$ depends on
$\psiif{1}{\vec{x}_0}$. See \S\ref{sec003.005.002} for how to
approximate
\begin{equation} \label{eq046}
    \vecpsif{\vec{x}_0}
    = \twobyonematrix{ x_{01} + \integral{0}{\infty}{ \exp{-a s} \scif{b}{1}{\flowC{s}{\vec{x}_0}} }{s} }{ x_{02} + \integral{0}{\infty}{ \exp{-\kappa a s} \sqb{ \scif{b}{2}{ \flowC{s}{\vec{x}_0} } - \scif{b}{2}{ \iterateD{p-1}{s}{\psiif{1}{\vec{x}_0}} } } }{s} }.
\end{equation}

\begin{claim}
There are ${\delta,k>0}$ such that
\[
    \norm{ \vecpsif{\vec{x}_0} - \vec{x}_0 }
    \leq k \norm{\vec{x}_0}^{\min{}{\alpha,\beta+1}}
    \mymbox{for all}
    \vec{x}_0 \in \ballO{\delta}.
\]
Furthermore, $\vecpsi$ is a near-identity transformation.
\end{claim}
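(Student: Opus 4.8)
The plan is to obtain the estimate componentwise and then assemble the two pieces. Write $\vecpsif{\vec{x}_0}=\rb{\psiif{1}{\vec{x}_0},\psiif{2}{\vec{x}_0}}^T$, so that $\vecpsif{\vec{x}_0}-\vec{x}_0=\rb{\psiif{1}{\vec{x}_0}-x_{01},\,\psiif{2}{\vec{x}_0}-x_{02}}^T$. The two required scalar estimates are already in hand: by \eqref{eq036} there are $\delta_1,k_1>0$ with $\abs{\psiif{1}{\vec{x}_0}-x_{01}}\leq k_1\norm{\vec{x}_0}^\alpha$ for $\vec{x}_0\in\ballO{\delta_1}$, and by \eqref{eq045} there are $\delta_2,k_2>0$ with $\abs{\psiif{2}{\vec{x}_0}-x_{02}}\leq k_2\norm{\vec{x}_0}^{\beta+1}$ for $\vec{x}_0\in\ballO{\delta_2}$.

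First I would set $\delta:=\min{}{\delta_1,\delta_2,1}$; the constraint $\delta\leq1$ is what matters for the bookkeeping, since $\norm{\vec{x}_0}<1$ forces both $\norm{\vec{x}_0}^\alpha\leq\norm{\vec{x}_0}^{\min{}{\alpha,\beta+1}}$ and $\norm{\vec{x}_0}^{\beta+1}\leq\norm{\vec{x}_0}^{\min{}{\alpha,\beta+1}}$. Then, with the Euclidean norm,
\[
    \norm{\vecpsif{\vec{x}_0}-\vec{x}_0}^2
    = \abs{\psiif{1}{\vec{x}_0}-x_{01}}^2 + \abs{\psiif{2}{\vec{x}_0}-x_{02}}^2
    \leq \rb{k_1^2+k_2^2}\norm{\vec{x}_0}^{2\min{}{\alpha,\beta+1}},
\]
so taking $k:=\sqrt{k_1^2+k_2^2}$ yields the asserted bound for all $\vec{x}_0\in\ballO{\delta}$.

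For the near-identity claim I would argue as follows. The estimate just proved reads $\norm{\vecpsif{\vec{x}_0}-\vec{x}_0}\leq k\norm{\vec{x}_0}^{\min{}{\alpha,\beta+1}}$, and since $\alpha>1$ and $\beta>0$ we have $\min{}{\alpha,\beta+1}>1$, so $\vecpsif{\vec{x}_0}-\vec{x}_0=\litO{\norm{\vec{x}_0}}$ as $\norm{\vec{x}_0}\to0$; in particular $\vecpsif{\vec{0}}=\vec{0}$ and, where $\vecpsi$ is differentiable, $\mat{D}\vecpsif{\vec{0}}=\mat{I}$. Continuity of $\vecpsi$ on a sufficiently small ball follows from the integral representations \eqref{eq035} and \eqref{eq044}: both integrands decay exponentially in $s$ uniformly in $\vec{x}_0$ — by \eqref{eq033} for the first component and by \eqref{eq042} for the second, where one also uses continuity of $\psiif{1}$ and of $\iterateD{p-1}{s}{\psiif{1}{\vec{x}_0}}$ in $\vec{x}_0$ — so the Weierstrass $M$-test argument recalled in the Background subsection gives uniform convergence and hence continuity of $\psiif{1}$ and $\psiif{2}$, and therefore of $\vecpsi$.

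There is essentially no obstacle here; the only point needing care is tracking which power of $\norm{\vec{x}_0}$ dominates, which is precisely why I impose $\delta\leq1$. The two substantive inputs, \eqref{eq036} and \eqref{eq045}, have already been established.
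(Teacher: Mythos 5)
Your proposal is correct and follows essentially the same route as the paper, which likewise deduces the claim from the componentwise estimates \eqref{eq036} and \eqref{eq045} together with ${\alpha>1}$ and ${\beta>0}$; your only additions are the routine bookkeeping (taking ${\delta\leq1}$ so the smaller exponent dominates, assembling the Euclidean norm) and the optional continuity remark. Nothing further is needed.
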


\begin{proof}
The conclusion follows from \eqref{eq036} and \eqref{eq045} along
with the fact that ${\alpha>1}$ and ${\beta>0}$.
\end{proof}

\subsubsection{Second Group of Iterates}

We are now in a position to define the remainder of the iterates,
$\setb{\iterateD{m}{t}{\vec{y}_0}}_{m=p}^\infty$.  Let
$\norm{\vec{y}_0}$ be sufficiently small. With inspiration from
\eqref{eq034} and \eqref{eq043}, we will define the $p^\text{th}$
iterate by
\begin{subequations} \label{eq047}
\begin{equation} \label{eq047a}
    \iterateD{p}{t}{\vec{y}_0}
    := \twobyonematrix{ \exp{a t} y_{01} - \integral{t}{\infty}{ \exp{a\rb{t-s}} \scif{b}{1}{ \iterateD{p-1}{s}{y_{01}} } }{s} }{ \exp{\kappa a t} y_{02} + \integral{0}{t}{ \exp{\kappa a\rb{t-s}} \scif{b}{2}{ \iterateD{p-1}{s}{y_{01}} } }{s} }
\end{equation}
and define the subsequent iterates recursively by
\begin{equation} \label{eq047b}
    \iterateD{m+1}{t}{\vec{y}_0}
    := \iterateD{p}{t}{\vec{y}_0}- \integral{t}{\infty}{ \exp{\rb{t-s}\mat{A}} \sqb{ \vecf{b}{\iterateD{m}{s}{\vec{y}_0}} - \vecf{b}{\iterateD{p-1}{s}{y_{01}}} } }{s}
    \quad \rb{ m \geq p}.
\end{equation}
\end{subequations}
To connect the iterates with the flow, we choose
${\vec{y}_0:=\vecpsif{\vec{x}_0}}$, where $\vecpsi$ is defined in
\eqref{eq046} and $\norm{\vec{x}_0}$ is sufficiently small. Observe
that we can combine \eqref{eq034}, \eqref{eq043}, and \eqref{eq047a}
to obtain the following.

\begin{claim}
There is a ${\delta>0}$ such that, for all
${\rb{t,\vec{x}_0}\in\Omega_\delta}$,
\[
    \flowC{t}{\vec{x}_0}
    = \iterateD{p}{t}{\vecpsif{\vec{x}_0}} - \integral{t}{\infty}{ \exp{\rb{t-s}\mat{A}} \sqb{ \vecf{b}{\flowC{s}{\vec{x}_0}} - \vecf{b}{\iterateD{p-1}{s}{\psiif{1}{\vec{x}_0}}} } }{s}.
\]
\end{claim}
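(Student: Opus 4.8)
The plan is to verify the asserted identity one component at a time, which is natural since ${\exp{\rb{t-s}\mat{A}}}$ is diagonal with diagonal entries ${\exp{a\rb{t-s}}}$ and ${\exp{\kappa a\rb{t-s}}}$. First I would set ${\vec{y}_0:=\vecpsif{\vec{x}_0}}$ so that, by the definition \eqref{eq046} of $\vecpsi$, its components are ${y_{01}=\psiif{1}{\vec{x}_0}}$ and ${y_{02}=\psiif{2}{\vec{x}_0}}$. Then I would choose ${\delta>0}$ small enough that \eqref{eq034}, \eqref{eq043}, \eqref{eq039}, \eqref{eq040}, and \eqref{eq042} all hold on $\Omega_\delta$ and that all the improper integrals appearing below converge; a single such $\delta$ exists because each cited estimate holds on its own ball and we take the smallest radius.

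For the first component I would substitute the first component of \eqref{eq047a} for ${\iterateD{p}{t}{\vecpsif{\vec{x}_0}}}$ into the right-hand side of the claim. This produces ${\exp{at}\psiif{1}{\vec{x}_0}}$, the integral ${-\integral{t}{\infty}{\exp{a\rb{t-s}}\scif{b}{1}{\iterateD{p-1}{s}{y_{01}}}}{s}}$ coming from the ${p^{\text{th}}}$ iterate, and ${-\integral{t}{\infty}{\exp{a\rb{t-s}}\sqb{\scif{b}{1}{\flowC{s}{\vec{x}_0}}-\scif{b}{1}{\iterateD{p-1}{s}{y_{01}}}}}{s}}$ coming from the correction term. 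Each of these improper integrals converges on $\Omega_\delta$: the one over the iterate by the decay estimate \eqref{eq039} for ${\iterateD{p-1}{\cdot}{y_{01}}}$ together with \eqref{eq004} and ${\alpha>1}$ (the same ``flip the integral'' bound as in \eqref{eq033}), and the remaining two by \eqref{eq033} and linearity. Hence I may add the last two integrals; the two integrands involving ${\iterateD{p-1}{\cdot}{y_{01}}}$ cancel, and what is left is ${\exp{at}\psiif{1}{\vec{x}_0}-\integral{t}{\infty}{\exp{a\rb{t-s}}\scif{b}{1}{\flowC{s}{\vec{x}_0}}}{s}}$, which is exactly ${\flowCi{1}{t}{\vec{x}_0}}$ by the first component of \eqref{eq034}.

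For the second component I would likewise substitute the second component of \eqref{eq047a}. The right-hand side of the claim then becomes ${\exp{\kappa a t}\psiif{2}{\vec{x}_0}+\integral{0}{t}{\exp{\kappa a\rb{t-s}}\scif{b}{2}{\iterateD{p-1}{s}{\psiif{1}{\vec{x}_0}}}}{s}-\integral{t}{\infty}{\exp{\kappa a\rb{t-s}}\sqb{\scif{b}{2}{\flowC{s}{\vec{x}_0}}-\scif{b}{2}{\iterateD{p-1}{s}{\psiif{1}{\vec{x}_0}}}}}{s}}$, and no recombination is needed: this is term for term the right-hand side of \eqref{eq043}, hence equals ${\flowCi{2}{t}{\vec{x}_0}}$. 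The one point needing attention here is the convergence of the improper integral in this expression, which is precisely the content of \eqref{eq042} together with ${\alpha+\rb{p-1}\beta-\kappa>0}$ from \eqref{eq037}. There is no genuine obstacle in this proof---the content is essentially bookkeeping---but the step deserving the most care is justifying the split-and-recombine of integrals in the first component, which is where the ${\rb{p-1}^{\text{th}}}$-iterate decay estimate \eqref{eq039} and the hypothesis ${\alpha>1}$ enter, along with checking that the chosen ${\delta}$ simultaneously supports every invoked estimate.
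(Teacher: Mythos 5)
Your proof is correct and follows the paper's own route: the paper obtains the claim precisely by combining \eqref{eq034}, \eqref{eq043}, and \eqref{eq047a}, which is exactly your componentwise substitution (with the cancellation of the $\iterateD{p-1}{s}{\psiif{1}{\vec{x}_0}}$ terms in the first component and the direct match with \eqref{eq043} in the second). Your added attention to the convergence of the split integrals via \eqref{eq033}, \eqref{eq039}, and \eqref{eq042} is sound bookkeeping that the paper leaves implicit.
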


\subsubsection{Existence, Decay Rate, and Closeness to the Flow of the Second Group}

\begin{prop} \label{prop003}
Consider the transformation defined by \eqref{eq046} and the second
group of iterates defined by \eqref{eq047}. Then, there is a
${\delta>0}$ (independent of $m$) such that if
${\norm{\vec{x}_0}<\delta}$ then
$\iterateD{m}{t}{\vecpsif{\vec{x}_0}}$ exists for each ${m \geq p}$.
Moreover, there is a ${k>0}$ (independent of $m$) such that
\[
    \norm{ \iterateD{m}{t}{\vecpsif{\vec{x}_0}} } \leq k \, \exp{at} \norm{ \vec{x}_0 }
    \mymbox{for all}
    \rb{t,\vec{x}_0} \in \Omega_\delta, ~ m \geq p.
\]
\end{prop}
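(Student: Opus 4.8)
The plan is to mirror the proof of Proposition~\ref{prop001}. Set $\vec{y}_0 := \vecpsif{\vec{x}_0}$ (so that $y_{01} = \psiif{1}{\vec{x}_0}$) and run an induction on $m \geq p$, establishing at each stage both that $\iterateD{m}{t}{\vec{y}_0}$ exists on $\Omega_\delta$ and that $\norm{\iterateD{m}{t}{\vec{y}_0}} \leq k\exp{at}\norm{\vec{x}_0}$, with $\delta$ and $k$ fixed once and for all. First I would collect the ambient constants: $\ell_1,\ell_2$ with $\norm{\vecf{b}{\vec{z}}} \leq \ell_1\norm{\vec{z}}^\alpha$ and $\norm{\mat{D}\vecf{b}{\vec{z}}} \leq \ell_2\norm{\vec{z}}^\beta$ on a small ball (from \eqref{eq004}); $\ell_3$ with $\norm{\iterateD{j}{t}{y_{01}}} \leq \ell_3\exp{at}\norm{\vec{x}_0}$ for $1 \leq j \leq p-1$ (from \eqref{eq039}); $\ell_4$ with $\norm{\vecpsif{\vec{x}_0}} \leq \ell_4\norm{\vec{x}_0}$ for $\norm{\vec{x}_0}$ small (from \eqref{eq036}, \eqref{eq045}, using $\alpha>1$, $\beta>0$); and, from \eqref{eq030}, the identities $\norm{\exp{t\mat{A}}} = \exp{at}$ and $\norm{\exp{\rb{t-s}\mat{A}}} = \exp{\kappa a\rb{t-s}}$ for $s \geq t$. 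The base case $m=p$ is a direct estimate of the two components of \eqref{eq047a}: the first component is $\exp{at}y_{01}$ plus an improper integral bounded, via $\norm{\iterateD{p-1}{s}{y_{01}}}^\alpha \leq \ell_3^\alpha\exp{\alpha as}\norm{\vec{x}_0}^\alpha$, by a multiple of $\exp{\alpha at}\norm{\vec{x}_0}^\alpha \leq \delta^{\alpha-1}\exp{at}\norm{\vec{x}_0}$; the second component is $\exp{\kappa at}y_{02}$, which is already $\leq \exp{at}\norm{\vec{x}_0}$ since $\kappa \geq \alpha > 1$, plus an integral of the same type. When $\kappa = \alpha$ one inserts a small $\sigma > 0$ and uses that $t\exp{-\sigma t}$ is bounded on $[0,\infty)$, exactly as in the derivation of \eqref{eq032}.

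The inductive step requires a genuinely new ingredient. Passing from $m$ to $m+1$ via \eqref{eq047b} amounts to controlling $\integral{t}{\infty}{\exp{\rb{t-s}\mat{A}}\sqb{\vecf{b}{\iterateD{m}{s}{\vec{y}_0}} - \vecf{b}{\iterateD{p-1}{s}{y_{01}}}}}{s}$, and since $\norm{\exp{\rb{t-s}\mat{A}}} = \exp{\kappa a\rb{t-s}} \geq 1$ for $s \geq t$, a crude bound on the bracket — by the triangle inequality and \eqref{eq004} it is only $\bigO{\exp{\alpha as}\norm{\vec{x}_0}^\alpha}$, which decays more slowly than $\exp{\kappa as}$ when $\kappa > \alpha$ — does not produce a convergent integral. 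The remedy is to carry along, as a second part of the induction, the auxiliary bound $\norm{\iterateD{m}{t}{\vec{y}_0} - \iterateD{p-1}{t}{y_{01}}} \leq k'\exp{\sqb{\alpha+\rb{p-2}\beta}\rb{a+\sigma}t}\norm{\vec{x}_0}$ for a small $\sigma > 0$. This holds at $m = p$: the dominant part of $\iterateD{p}{t}{\vec{y}_0} - \iterateD{p-1}{t}{y_{01}}$ is $\exp{\kappa at}y_{02}$, and since $\alpha + \rb{p-2}\beta \leq \kappa$ by \eqref{eq037}, $\exp{\kappa at}$ decays at least as fast as $\exp{\sqb{\alpha+\rb{p-2}\beta}at}$; the remaining parts are the improper integral of $\vecf{b}{\iterateD{p-1}{s}{y_{01}}} - \vecf{b}{\iterateD{p-2}{s}{y_{01}}}$ (or of $\vecf{b}{\exp{as}y_{01}}$ when $p=2$), which decay faster still by the Mean Value Theorem together with \eqref{eq004}, \eqref{eq039}, and Theorem~\ref{thm002}.

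With the auxiliary bound in hand at level $m$, the step closes. The Mean Value Theorem and \eqref{eq004} give $\norm{\vecf{b}{\iterateD{m}{s}{\vec{y}_0}} - \vecf{b}{\iterateD{p-1}{s}{y_{01}}}} \leq C\exp{\sqb{\alpha+\rb{p-1}\beta}\rb{a+\sigma}s}\norm{\vec{x}_0}^{\beta+1}$, and because $\alpha + \rb{p-1}\beta > \kappa$ by \eqref{eq037} the integral $\integral{t}{\infty}{\exp{\kappa a\rb{t-s}}\exp{\sqb{\alpha+\rb{p-1}\beta}\rb{a+\sigma}s}}{s}$ converges (for $\sigma$ small) and is dominated by a multiple of $\exp{\sqb{\alpha+\rb{p-2}\beta}\rb{a+\sigma}t}$; combined with $\norm{\vec{x}_0}^{\beta+1} \leq \delta^\beta\norm{\vec{x}_0}$ and $\exp{\sqb{\alpha+\rb{p-2}\beta}\rb{a+\sigma}t} \leq \exp{at}$ (valid for $\sigma$ small, since $\alpha + \rb{p-2}\beta > 1$), the integral term is bounded by $\rb{\text{const}\cdot\delta^\beta}\exp{at}\norm{\vec{x}_0}$. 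Hence $\iterateD{m+1}{t}{\vec{y}_0}$ exists and satisfies $\norm{\iterateD{m+1}{t}{\vec{y}_0}} \leq \norm{\iterateD{p}{t}{\vec{y}_0}} + \rb{\text{const}\cdot\delta^\beta}\exp{at}\norm{\vec{x}_0}$, while the same computation reproduces the auxiliary bound at level $m+1$; choosing $k$ and $k'$ to absorb the corrections — a relation of the form $k' = \rb{\text{base}} + \rb{\text{const}}k'\delta^\beta$, solvable once $\delta$ is small, exactly as $k$ is chosen in the proof of Proposition~\ref{prop001} — makes both constants independent of $m$, and the proposition follows.

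I expect the main obstacle to be pinning down the exponent $\alpha + \rb{p-2}\beta$ of the auxiliary bound so that it is simultaneously (i) true at $m=p$, which forces the exponent to be no smaller than $\kappa$ and hence, by \eqref{eq037}, exactly $\alpha+\rb{p-2}\beta$, and (ii) strong enough, after one application of the Mean Value Theorem, to overcome the factor $\exp{\kappa a\rb{t-s}} \geq 1$ inside the improper integral, which forces $\alpha + \rb{p-1}\beta > \kappa$ — again \eqref{eq037}. Once the exponent is fixed, uniformity in $m$ is automatic, because \eqref{eq047b} expresses $\iterateD{m+1}{t}{\vec{y}_0}$ as a fixed base term plus a small correction rather than as a true fixed-point iteration; the only remaining care is the usual $\sigma$-slack in the boundary case $\kappa = \alpha$.
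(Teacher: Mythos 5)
Your proposal is correct, and since the paper omits its own argument for Proposition~\ref{prop003} (stating only that it is ``similar to that of Proposition~\ref{prop001}''), your two-part induction is exactly the natural realization of that plan: the main bound $\norm{\iterateD{m}{t}{\vec{y}_0}}\leq k\,\exp{at}\norm{\vec{x}_0}$ carried together with the auxiliary difference bound on $\iterateD{m}{t}{\vec{y}_0}-\iterateD{p-1}{t}{y_{01}}$ with exponent $\alpha+\rb{p-2}\beta$, whose viability is precisely what \eqref{eq037} encodes. You correctly identify the one point where a verbatim copy of Proposition~\ref{prop001} would fail (the kernel $\exp{\kappa a\rb{t-s}}$ against the crude $\bigO{\exp{\alpha as}}$ bound in the second component) and exploit the difference structure of \eqref{eq047b}, non-circularly via \eqref{eq039} and Theorem~\ref{thm002}, so the argument closes with $\delta$ and $k$ independent of $m$.
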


\begin{proof}
The proof is similar to that of Proposition~\ref{prop001} and omitted in the interest of space.
\end{proof}

\begin{thm} \label{thm003}
Consider the transformation defined by \eqref{eq046} and the second
group of iterates defined by \eqref{eq047}. Let ${\sigma>0}$ be
sufficiently small. Then, there are constants ${\delta>0}$ and
${\seq{k}{m}{p}{\infty} \subset \Rd{}_+}$ such that, for all
${\rb{t,\vec{x}_0}\in\Omega_\delta}$ and ${m \geq p}$,
\[
    \norm{ \flowC{t}{\vec{x}_0} - \iterateD{m}{t}{\vecpsif{\vec{x}_0}} }
    \leq k_m \exp{ \sqb{ \alpha + \rb{ m - 1 } \beta } \sqb{ a + \sigma } t } \norm{ \vec{x}_0 }^{\rb{m+1-p}\beta+1}.
\]
\end{thm}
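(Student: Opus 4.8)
The plan is to argue by induction on ${m\geq p}$, in the spirit of the proof of Theorem~\ref{thm001}, but now anchored at ${m=p}$ by the ``doubly-flipped'' integral identity from the Claim immediately above, namely
\[
    \flowC{t}{\vec{x}_0} = \iterateD{p}{t}{\vecpsif{\vec{x}_0}} - \integral{t}{\infty}{ \exp{\rb{t-s}\mat{A}} \sqb{ \vecf{b}{\flowC{s}{\vec{x}_0}} - \vecf{b}{\iterateD{p-1}{s}{\psiif{1}{\vec{x}_0}}} } }{s},
\]
together with the recursion \eqref{eq047b} for the second group. First I would fix, once and for all, auxiliary constants and a radius ${\delta>0}$, all independent of $m$, so that the nonlinearity bounds \eqref{eq004}, the flow bound \eqref{eq031}, the first-group bounds \eqref{eq039} and \eqref{eq040}, the uniform second-group bound from Proposition~\ref{prop003}, and the identity ${\norm{\exp{r\mat{A}}}=\exp{\kappa a r}}$ for ${r\leq 0}$ (from \eqref{eq030}) all apply on $\Omega_\delta$; in particular $\delta$ must be small enough that every point on the segment joining $\flowC{s}{\vec{x}_0}$ to $\iterateD{m}{s}{\vecpsif{\vec{x}_0}}$ (and to $\iterateD{p-1}{s}{\psiif{1}{\vec{x}_0}}$) lies in the ball where the Jacobian estimate holds. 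I would also pin down the smallness of $\sigma$: since \eqref{eq037} gives ${\alpha+\rb{p-1}\beta>\kappa}$ and ${a<0}$, we have ${\sqb{\alpha+\rb{p-1}\beta}a<\kappa a}$ strictly, so for ${\sigma>0}$ small enough the quantity ${\kappa a-\sqb{\alpha+j\beta}\rb{a+\sigma}}$ is positive for every integer ${j\geq p-1}$; this is exactly what makes the improper integrals below converge.

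For the base case ${m=p}$, the displayed identity \emph{is} the assertion that ${\flowC{t}{\vec{x}_0}-\iterateD{p}{t}{\vecpsif{\vec{x}_0}}}$ equals the negative of that integral. Taking norms, using ${\norm{\exp{\rb{t-s}\mat{A}}}=\exp{\kappa a\rb{t-s}}}$ for ${s\geq t}$, applying the Mean Value Theorem with \eqref{eq004}, \eqref{eq031}, \eqref{eq039} to bound ${\norm{\vecf{b}{\flowC{s}{\vec{x}_0}}-\vecf{b}{\iterateD{p-1}{s}{\psiif{1}{\vec{x}_0}}}}}$ by a constant times ${\exp{\beta a s}\norm{\vec{x}_0}^{\beta}}$ times ${\norm{\flowC{s}{\vec{x}_0}-\iterateD{p-1}{s}{\psiif{1}{\vec{x}_0}}}}$, and then invoking \eqref{eq040} with ${m=p-1}$ for that last difference, the integrand is dominated by a constant times ${\exp{\kappa a\rb{t-s}}\exp{\sqb{\alpha+\rb{p-1}\beta}\rb{a+\sigma}s}\norm{\vec{x}_0}^{\beta+1}}$ (absorbing ${\exp{\beta a s}}$ into ${\exp{\beta\rb{a+\sigma}s}}$). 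Carrying out the integral gives ${k_p\exp{\sqb{\alpha+\rb{p-1}\beta}\rb{a+\sigma}t}\norm{\vec{x}_0}^{\beta+1}}$ with ${k_p>0}$ equal to that constant divided by ${\kappa a-\sqb{\alpha+\rb{p-1}\beta}\rb{a+\sigma}}$, which is the claimed bound at ${m=p}$ since ${\rb{p+1-p}\beta+1=\beta+1}$.

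For the inductive step, assume the bound at ${m\geq p}$ and subtract \eqref{eq047b} with ${\vec{y}_0=\vecpsif{\vec{x}_0}}$ from the displayed identity; the common ${\iterateD{p}{t}{\vecpsif{\vec{x}_0}}}$ term and the common ${\vecf{b}{\iterateD{p-1}{s}{\psiif{1}{\vec{x}_0}}}}$ terms cancel, leaving
\[
    \flowC{t}{\vec{x}_0}-\iterateD{m+1}{t}{\vecpsif{\vec{x}_0}} = -\integral{t}{\infty}{ \exp{\rb{t-s}\mat{A}} \sqb{ \vecf{b}{\flowC{s}{\vec{x}_0}} - \vecf{b}{\iterateD{m}{s}{\vecpsif{\vec{x}_0}}} } }{s}.
\]
Now the same computation applies verbatim: the Mean Value Theorem with \eqref{eq004}, \eqref{eq031} and Proposition~\ref{prop003} bounds ${\norm{\mat{D}\vecf{b}}}$ along the segment by a constant times ${\exp{\beta a s}\norm{\vec{x}_0}^{\beta}}$, the induction hypothesis bounds ${\norm{\flowC{s}{\vec{x}_0}-\iterateD{m}{s}{\vecpsif{\vec{x}_0}}}}$ by ${k_m\exp{\sqb{\alpha+\rb{m-1}\beta}\rb{a+\sigma}s}\norm{\vec{x}_0}^{\rb{m+1-p}\beta+1}}$, and multiplying (again absorbing ${\exp{\beta a s}}$) makes the integrand a constant times ${\exp{\kappa a\rb{t-s}}\exp{\sqb{\alpha+m\beta}\rb{a+\sigma}s}\norm{\vec{x}_0}^{\rb{m+2-p}\beta+1}}$. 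Integrating yields the bound at ${m+1}$, with ${k_{m+1}>0}$ the constant over ${\kappa a-\sqb{\alpha+m\beta}\rb{a+\sigma}}$, and the induction closes.

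What needs care---rather than any real obstacle---is twofold. First, the auxiliary constants and $\delta$ must be chosen uniformly in $m$, which is possible only because Proposition~\ref{prop003} furnishes a \emph{single} decay bound ${k\exp{at}\norm{\vec{x}_0}}$ valid for all ${m\geq p}$, so the segment between flow and iterate stays in the Jacobian's domain independently of $m$. Second, one must track the exponents: every step multiplies the ${\norm{\vec{x}_0}}$-power of the error by ${\norm{\vec{x}_0}^{\beta}}$ while advancing the exponential rate from ${\exp{\sqb{\alpha+\rb{m-1}\beta}\rb{a+\sigma}t}}$ to ${\exp{\sqb{\alpha+m\beta}\rb{a+\sigma}t}}$, and this works precisely because the term ${\exp{\kappa a t}x_{02}}$ and the un-flipped second-component integral---exactly the features that forced the weaker ${\norm{\vec{x}_0}}$-power in Theorem~\ref{thm002}---have, by construction, already been absorbed into ${\iterateD{p}{t}{\vecpsif{\vec{x}_0}}}$ and hence cancel in every subtraction from stage $p$ onward. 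Finally, as in Remark~\ref{rem001}, the recursion for ${k_{m+1}}$ shows ${k_{m+1}\leq\rb{r/m}k_m}$ for some $m$-independent ${r>0}$, so the ${k_m}$ stay bounded (in fact tend to $0$), which would also give a convergence statement for the second group analogous to Corollary~\ref{cor001}.
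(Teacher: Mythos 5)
Your proof is correct and follows exactly the route the paper indicates for Theorem~\ref{thm003}: the paper omits the details, stating only that the argument is ``similar to that of Theorem~\ref{thm001},'' and your induction---anchored at ${m=p}$ by the doubly-flipped identity of the preceding Claim, closed via the cancellation in \eqref{eq047b}, the Mean Value Theorem with \eqref{eq004}, \eqref{eq031}, \eqref{eq039}, Proposition~\ref{prop003}, and the convergence condition ${\kappa a-\sqb{\alpha+j\beta}\rb{a+\sigma}>0}$ for ${j\geq p-1}$ secured by \eqref{eq037} and the smallness of $\sigma$---is precisely that adaptation, with the exponent bookkeeping for ${\norm{\vec{x}_0}^{\rb{m+1-p}\beta+1}}$ handled correctly. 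Nothing further is needed.
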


\begin{proof}
The proof is similar to that of Theorem~\ref{thm001} and omitted in
the interest of space.
\end{proof}

\begin{cor}
Consider the iterates defined by \eqref{eq038} and \eqref{eq047}.
There exists ${\delta>0}$ such that
\dsm{\lim_{m\to\infty}\iterateD{m}{t}{\vecpsif{\vec{x}_0}}=\flowC{t}{\vec{x}_0}}
uniformly in $\Omega_\delta$.
\end{cor}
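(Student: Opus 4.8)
The plan is to deduce the uniform convergence directly from Theorem~\ref{thm003}, exactly as Corollary~\ref{cor001} followed from Theorem~\ref{thm001} and Remark~\ref{rem001}. Only the tail ${m\geq p}$ of the sequence $\setb{\iterateD{m}{t}{\vecpsif{\vec{x}_0}}}_{m=1}^\infty$ is relevant to the limit, so the first group of iterates \eqref{eq038} (of which there are only finitely many) plays no role; it suffices to bound ${\norm{\flowC{t}{\vec{x}_0}-\iterateD{m}{t}{\vecpsif{\vec{x}_0}}}}$ for ${m\geq p}$ by a quantity independent of ${\rb{t,\vec{x}_0}}$ that vanishes as ${m\to\infty}$.

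First I would fix ${\sigma\in\rb{0,-a}}$ small enough for Theorem~\ref{thm003} to apply and then take ${\delta\in\rb{0,1}}$ small enough that the conclusion of Theorem~\ref{thm003} holds on $\Omega_\delta$. For ${\rb{t,\vec{x}_0}\in\Omega_\delta}$ and ${m\geq p}$ one has ${\alpha+\rb{m-1}\beta>0}$ and ${a+\sigma<0}$, so ${\exp{\sqb{\alpha+\rb{m-1}\beta}\sqb{a+\sigma}t}\leq1}$ for ${t\geq0}$; and since ${\rb{m+1-p}\beta+1>0}$ and ${\norm{\vec{x}_0}<\delta<1}$, we get ${\norm{\vec{x}_0}^{\rb{m+1-p}\beta+1}\leq\delta^{\rb{m+1-p}\beta+1}}$. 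Hence Theorem~\ref{thm003} yields the uniform bound
\[
    \norm{ \flowC{t}{\vec{x}_0} - \iterateD{m}{t}{\vecpsif{\vec{x}_0}} } \leq k_m \, \delta^{\rb{m+1-p}\beta+1}
    \mymbox{for all} \rb{t,\vec{x}_0}\in\Omega_\delta,~m\geq p,
\]
with the right-hand side independent of ${\rb{t,\vec{x}_0}}$.

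It then remains to check that ${k_m\,\delta^{\rb{m+1-p}\beta+1}\to0}$ as ${m\to\infty}$. Since ${\delta<1}$ and ${\rb{m+1-p}\beta+1\to\infty}$, the factor ${\delta^{\rb{m+1-p}\beta+1}}$ is bounded and in fact tends to $0$, so it is enough to control the growth of $\seq{k}{m}{p}{\infty}$. Inspecting how these constants arise in the (Theorem~\ref{thm001}-style) proof of Theorem~\ref{thm003}, the recursion defining $k_{m+1}$ carries a denominator growing linearly in $m$; hence there is an ${r>0}$ independent of $m$ with ${k_{m+1}\leq\rb{\fracslash{r}{m}}k_m}$, so ${k_m\leq\rb{\fracslash{r^{m-p}}{\rb{m-p}!}}k_p\to0}$, precisely the analogue of Remark~\ref{rem001}. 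Therefore
\[
    \sup{ \rb{t,\vec{x}_0}\in\Omega_\delta }{ \norm{ \flowC{t}{\vec{x}_0} - \iterateD{m}{t}{\vecpsif{\vec{x}_0}} } } \longrightarrow 0 \mymbox{as} m\to\infty,
\]
which is the asserted uniform convergence on $\Omega_\delta$.

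The only genuinely delicate point is this growth bound on $\seq{k}{m}{p}{\infty}$: because Theorem~\ref{thm003} is only sketched, one should confirm that its constants obey the same factorial-decay estimate as in Remark~\ref{rem001}. The conclusion is robust, however: even a crude geometric bound ${k_m\leq C^m}$ would suffice after further shrinking $\delta$ so that ${C\delta^\beta<1}$, so no estimate beyond those already established in the paper is actually needed.
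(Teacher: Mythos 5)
Your argument is correct and is essentially the paper's own: the paper proves this corollary by the same route as Corollary~\ref{cor001}, i.e.\ combining the closeness estimate of Theorem~\ref{thm003} with the analogue of Remark~\ref{rem001} on the decay of the constants $k_m$, which is exactly what you do (discarding the finitely many first-group iterates and bounding the exponential and $\norm{\vec{x}_0}$ factors uniformly). Your closing observation that even a geometric bound on $k_m$ would suffice after shrinking $\delta$ is a reasonable safeguard but not needed beyond what the paper already establishes.
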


\begin{proof}
The proof is similar to that of Corollary~\ref{cor001}.
\end{proof}

\subsection{Resonance, Quadratic Nonlinearity, and Relating Components} \label{sec003.003}

The special case which we focus on here is applicable to the
Michaelis-Menten mechanism which we will see later. Assume
${\kappa\in\setb{2,3,\ldots}}$, that is, there is resonance in the
eigenvalues. Define
\[
    \vecf{b}{\vec{x}} := \twobyonematrix{ b_{111} x_1^2 + b_{112} x_1 x_2 + b_{122} x_2^2 }{ b_{211} x_1^2 + b_{212} x_1 x_2 + b_{222} x_2^2 },
\]
where ${\setb{b_{ijk}}_{i,j,k=1}^2\subset\Rd{}}$ are constants with
at least one $b_{ijk}$ being non-zero. Take ${\alpha=2}$,
${\beta=1}$, and ${p=\kappa}$. Hence, consider the initial value
problem
\begin{equation} \label{eq048}
    \dot{\vec{x}} = \mat{A} \vec{x} + \vecf{b}{\vec{x}},
    \quad
    \vec{x}(0) = \vec{x}_0.
\end{equation}
Let ${\vec{y}_0:=\vecpsif{\vec{x}_0}}$, where $\norm{\vec{x}_0}$ is
sufficiently small, satisfy ${y_{01}>0}$. (Since
${x_1(t)=y_{01}\,\exp{at}+\litO{\exp{at}}}$ as ${t\to\infty}$, we
assume ${y_{01}>0}$ since it corresponds to solutions with ${x_1>0}$
approaching the equilibrium point in the slow direction. This is
desirable since it corresponds to, for example, solutions that arise
in chemical kinetics.) Also, let $\vec{x}(t)$ be the solution to
\eqref{eq048}. Our goal is to obtain an asymptotic expression for
$x_2$ in terms of $x_1$ as ${x_1 \to 0^+}$ up to an order for which
the initial condition distinguishes solutions. This can be achieved
without knowing $\vec{y}_0$ in terms of $\vec{x}_0$ explicitly.

\paragraph{$\diamond$ Case 1: ${\kappa=2}$:} We will only need the first two iterates. Using \eqref{eq038a}
and \eqref{eq047a},
\[
    \iterateD{1}{t}{y_{01}} = \twobyonematrix{ y_{01} \, \exp{at} }{0}
    \mymbox{and}
    \iterateD{2}{t}{\vec{y}_0} = \twobyonematrix{ y_{01} \, \exp{at}  + \sqb{ \frac{b_{111}y_{01}^2}{a} } \exp{2at} }{ y_{02} \, \exp{2at}  + \sqb{ b_{211} y_{01}^2 } t \, \exp{2at} }.
\]

\begin{claim}
The components $x_1(t)$ and $x_2(t)$ of the solution to
\eqref{eq048} satisfy
\begin{equation}  \label{eq049}
    x_1(t) = \rb{ y_{01} } \exp{at} + \litO{\exp{at}}
    \mymbox{and}
    x_2(t) = \rb{ b_{211} y_{01}^2 } t \, \exp{2at} + \rb{ y_{02} } \exp{2at} + \litO{\exp{2at}}
    \mymbox{as}
    t \to \infty.
\end{equation}
\end{claim}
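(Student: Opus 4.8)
The plan is to read both asymptotic statements off the explicit second iterate $\iterateD{2}{t}{\vec{y}_0}$ recorded above, combined with the closeness estimate of Theorem~\ref{thm003}. In the present setting $\alpha=2$, $\beta=1$, and $p=\kappa=2$, so the second group of iterates already starts at $m=p=2$; accordingly I would apply Theorem~\ref{thm003} with $m=2$ and $\vec{y}_0:=\vecpsif{\vec{x}_0}$, taking $\sigma>0$ sufficiently small and $\norm{\vec{x}_0}<\delta$ with $\delta>0$ sufficiently small (so that $\vec{y}_0$ and $\iterateD{2}{t}{\vec{y}_0}$ are defined, by Proposition~\ref{prop003}). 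Since $\alpha+\rb{m-1}\beta=3$ and $\rb{m+1-p}\beta+1=2$ here, this yields
\[
    \norm{ \flowC{t}{\vec{x}_0} - \iterateD{2}{t}{\vec{y}_0} } \leq k_2 \, \exp{ 3\rb{ a + \sigma } t } \norm{ \vec{x}_0 }^{2}
    \mymbox{for all} t \geq 0.
\]
Reading this inequality componentwise and inserting the displayed formula for $\iterateD{2}{t}{\vec{y}_0}$, I would obtain
\[
    x_1(t) = \rb{ y_{01} } \exp{at} + \sqb{ \frac{b_{111}y_{01}^2}{a} } \exp{2at} + E_1(t),
    \qquad
    x_2(t) = \rb{ y_{02} } \exp{2at} + \rb{ b_{211}y_{01}^2 } t \, \exp{2at} + E_2(t),
\]
with $\abs{E_1(t)},\abs{E_2(t)} \leq k_2\,\exp{3\rb{a+\sigma}t}\norm{\vec{x}_0}^2$ for all $t\geq0$.

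It then remains only to absorb the extraneous pieces into the little-$\mathsf{o}$ symbols, which is arranged by taking $\sigma$ small relative to $a$. Since $a<0$, one has $\exp{2at}=\litO{\exp{at}}$; and if $\sigma$ is small enough that $3\rb{a+\sigma}<a$ (that is, $\sigma<-2a/3$) then $E_1(t)=\bigO{\exp{3\rb{a+\sigma}t}}=\litO{\exp{at}}$, so $x_1(t)=\rb{y_{01}}\exp{at}+\litO{\exp{at}}$. If in addition $3\rb{a+\sigma}<2a$ (that is, $\sigma<-a/3$) then $E_2(t)=\litO{\exp{2at}}$, so $x_2(t)=\rb{b_{211}y_{01}^2}t\,\exp{2at}+\rb{y_{02}}\exp{2at}+\litO{\exp{2at}}$; the $t\,\exp{2at}$ term is written first because it dominates $\exp{2at}$ as $t\to\infty$ (the expansion stays correct as written if $b_{211}=0$). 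Both conditions hold for any $\sigma\in\rb{0,-a/3}$ that is also small enough for Theorem~\ref{thm003}, which establishes \eqref{eq049}. For the $x_1$ statement alone one could equally use Theorem~\ref{thm002} with $m=1$ and the first iterate $\iterateD{1}{t}{y_{01}}$.

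The only computation not already spelled out in the text is the verification of the displayed formula for $\iterateD{2}{t}{\vec{y}_0}$: substituting $\iterateD{1}{s}{y_{01}}=\twobyonematrix{y_{01}\exp{as}}{0}$ into $\vec{b}$ gives $\scif{b}{1}{\iterateD{1}{s}{y_{01}}}=b_{111}y_{01}^2\exp{2as}$ and $\scif{b}{2}{\iterateD{1}{s}{y_{01}}}=b_{211}y_{01}^2\exp{2as}$, after which the integrals in \eqref{eq047a} (with $p=\kappa=2$) are elementary, namely $\integral{t}{\infty}{\exp{a\rb{t-s}}\exp{2as}}{s}=-\exp{2at}/a$ and $\integral{0}{t}{\exp{2a\rb{t-s}}\exp{2as}}{s}=t\,\exp{2at}$, producing the $\exp{2at}$ and $t\,\exp{2at}$ terms. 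I do not anticipate a genuine obstacle; the only care needed is in the bookkeeping of orders---ensuring, via the smallness of $\sigma$, that both leftover pieces in $x_1$ are $\litO{\exp{at}}$ and that the remainder $E_2$ is $\litO{\exp{2at}}$, so that $E_2$ does not interfere with the $\rb{y_{02}}\exp{2at}$ term---and in recording the two contributions to $x_2$ in the order dictated by $t\,\exp{2at}$ dominating $\exp{2at}$.
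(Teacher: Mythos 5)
Your proposal is correct and follows exactly the paper's route: the paper's proof is the one-line observation that the claim "follows from Theorem~\ref{thm003} and our expression for the second iterate," and you have simply filled in the same steps (the $m=2$ error bound with exponent $3(a+\sigma)$ and power $\norm{\vec{x}_0}^2$, the choice $\sigma<-a/3$, and the elementary integrals giving $\iterateD{2}{t}{\vec{y}_0}$). No discrepancies to report.
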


\begin{proof}
It follows from Theorem~\ref{thm003} and our expression for the
second iterate.
\end{proof}

\begin{claim}
We can write $\exp{at}$, $\exp{2at}$, and $t$ in terms of $x_1$ when ${x_1 \to 0^+}$ as
\begin{equation} \label{eq050}
    \exp{at} = \rb{ \frac{1}{y_{01}} } x_1 + \litO{x_1},
    \quad
    \exp{2at} = \rb{ \frac{1}{y_{01}^2} } x_1^2 + \litO{x_1^2},
    \mymbox{and}
    t = \rb{ \frac{1}{a} } \lnb{x_1} - \frac{\lnb{y_{01}}}{a} + \litO{1}.
\end{equation}
\end{claim}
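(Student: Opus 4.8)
The plan is to invert, to leading order, the relation between $x_1$ and $t$ furnished by the first asymptotic formula in \eqref{eq049}, and then to read off the three identities by elementary manipulations. The first preparatory step is to check that $t$ may legitimately be regarded as a function of $x_1$ on a one-sided neighbourhood of the origin. From \eqref{eq049} we have $x_1(t) = \rb{y_{01}}\exp{at}\rb{1+\litO{1}}$ as $t\to\infty$, so (since $y_{01}>0$) $x_1(t)>0$ for $t$ large and $x_1(t)\to0^+$. Moreover $\dot{x}_1 = a x_1 + \scif{b}{1}{\vec{x}}$ with $a<0$, and by \eqref{eq004} and \eqref{eq031} the term $\scif{b}{1}{\vec{x}}$ is $\bigO{\exp{2at}}$, which is negligible compared with $a x_1 = a\rb{y_{01}}\exp{at}\rb{1+\litO{1}}$; hence $\dot{x}_1<0$ for $t$ large. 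Thus $t\mapsto x_1(t)$ is eventually a strictly decreasing bijection onto a one-sided neighbourhood of $0$, its inverse is what is meant by ``$t$ as a function of $x_1$'', and ``$x_1\to0^+$'' corresponds to ``$t\to\infty$''.

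Next comes the first identity: dividing $x_1 = \rb{y_{01}}\exp{at}\rb{1+\litO{1}}$ by $y_{01}$ and using $\rb{1+\litO{1}}^{-1} = 1+\litO{1}$ gives $\exp{at} = \rb{\fracslash{1}{y_{01}}}x_1\rb{1+\litO{1}} = \rb{\fracslash{1}{y_{01}}}x_1 + \litO{x_1}$. Squaring this and expanding yields $\exp{2at} = \rb{\fracslash{1}{y_{01}^2}}x_1^2 + \litO{x_1^2}$, the second identity. For the third, take logarithms in $\exp{at} = \rb{\fracslash{x_1}{y_{01}}}\rb{1+\litO{1}}$ to get $at = \lnb{x_1} - \lnb{y_{01}} + \lnb{1+\litO{1}}$; since $\litO{1}\to0$ we have $\lnb{1+\litO{1}} = \litO{1}$, and dividing by $a$ (the sign taking care of itself since $a<0$) gives $t = \rb{\fracslash{1}{a}}\lnb{x_1} - \fracslash{\lnb{y_{01}}}{a} + \litO{1}$, as claimed.

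The only point requiring genuine care is the first paragraph — establishing that $x_1(t)$ is eventually strictly monotone so that the inverse function is well defined and the statements ``as $x_1\to0^+$'' make sense; this is where the crude estimate $\scif{b}{1}{\vec{x}} = \bigO{\exp{2at}}$ (from \eqref{eq004} and \eqref{eq031}) does the work. Everything after that is routine algebra, together with the elementary facts that a $\litO{1}$ quantity survives, up to a $\litO{1}$, under $u\mapsto\rb{1+u}^{-1}$ and under $u\mapsto\lnb{1+u}$, and that $\litO{x_1}^2$ and cross-terms are $\litO{x_1^2}$.
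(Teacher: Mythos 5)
Your proposal is correct and follows essentially the same route as the paper: invert the leading-order relation $x_1 = y_{01}\exp{at}\rb{1+\litO{1}}$ to get the first identity, square it for the second, and take logarithms (using $\lnb{1+u}=\litO{1}$) for the third. The only difference is your preliminary verification that $x_1(t)$ is eventually strictly decreasing so that $t$ is a genuine function of $x_1$ — a point the paper leaves implicit — which is a welcome but minor addition.
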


\begin{proof}
To obtain the first part, use the first expression of \eqref{eq049}
and a method employed in \S\ref{sec002.004.001}. The second part
follows from the first. To obtain the third part, write the first
part as
${\exp{at}=\sqb{\fracslash{1}{y_{01}}}\sqb{x_1}\sqb{1+\litO{1}}}$ as
${x_1 \to 0^+}$. Taking the natural logarithm of both sides,
\[
    a t = -\lnb{ y_{01} } + \lnb{ x_1 } + \lnb{ 1 + \litO{1} }
    \mymbox{as}
    x_1 \to 0^+.
\]
Observing that ${\lnb{1+u}=\litO{1}}$ as ${u \to 0}$, we are left
with what we were trying to show.
\end{proof}

\begin{prop} \label{prop004}
Consider the initial value problem \eqref{eq048}, where
${\kappa=2}$, $\norm{\vec{x}_0}$ is sufficiently small,
${\vec{y}_0=\vecpsif{\vec{x}_0}}$, and ${\psiif{1}{\vec{x}_0}>0}$.
If $\vec{x}(t)$ is the solution, then we can write $x_2$ in terms of
$x_1$ as
\[
    x_2 = \sqb{ \frac{b_{211}}{a} } x_1^2 \, \lnb{x_1} + \sqb{ \frac{y_{02}}{y_{01}^2} - \frac{ b_{211} \lnb{y_{01}} }{a} } x_1^2 + \litO{x_1^2}
    \mymbox{as}
    x_1 \to 0^+.
\]
\end{prop}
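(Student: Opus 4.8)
The plan is to substitute the expansions \eqref{eq050} into \eqref{eq049} and collect terms by order as $x_1\to0^+$, keeping careful track of how the resonant logarithm propagates. Throughout, write $\vec{y}_0=\vecpsif{\vec{x}_0}$, so that $y_{01}=\psiif{1}{\vec{x}_0}>0$ by hypothesis and $x_i(t)=\flowCi{i}{t}{\vec{x}_0}$.

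First I would record the relevant expansions slightly more sharply than \eqref{eq049}--\eqref{eq050} state them, because the bare $\litO{\cdot}$ bounds there turn out not to suffice. Applying Theorem~\ref{thm003} with $m=p=\kappa=2$ (so $\alpha=2$ and $\beta=1$) together with the displayed formula for $\iterateD{2}{t}{\vec{y}_0}$ gives, for $\sigma>0$ small enough that $3\rb{a+\sigma}<2a$ and with $\epsilon:=-a-3\sigma>0$,
\[
    x_1(t)=y_{01}\exp{at}+\sqb{\frac{b_{111}y_{01}^2}{a}}\exp{2at}+\bigO{\exp{\rb{2a-\epsilon}t}},\qquad
    x_2(t)=b_{211}y_{01}^2\,t\,\exp{2at}+y_{02}\exp{2at}+\bigO{\exp{\rb{2a-\epsilon}t}}
\]
as $t\to\infty$; in particular $x_1(t)=y_{01}\exp{at}\rb{1+\bigO{\exp{at}}}$. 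Inverting this last relation exactly as in \S\ref{sec002.004.001} gives $\exp{at}=\frac{1}{y_{01}}x_1\rb{1+\bigO{x_1}}$, hence $\exp{2at}=\frac{1}{y_{01}^2}x_1^2\rb{1+\bigO{x_1}}$ and, on taking logarithms, $at=\lnb{x_1}-\lnb{y_{01}}+\bigO{x_1}$. These are the statements of \eqref{eq050}, but with the relative errors retained.

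Next I would substitute into the expression for $x_2(t)$. Since $\exp{2at}$ and $x_1^2$ are comparable, the remainder obeys $\bigO{\exp{\rb{2a-\epsilon}t}}=\bigO{x_1^{2+\epsilon/\abs{a}}}=\litO{x_1^2}$, and $y_{02}\exp{2at}=\frac{y_{02}}{y_{01}^2}x_1^2+\litO{x_1^2}$. For the resonant term,
\[
    b_{211}y_{01}^2\,t\,\exp{2at}
    =\frac{b_{211}}{a}\,x_1^2\rb{1+\bigO{x_1}}\rb{\lnb{x_1}-\lnb{y_{01}}+\bigO{x_1}}
    =\frac{b_{211}}{a}\,x_1^2\lnb{x_1}-\frac{b_{211}\lnb{y_{01}}}{a}\,x_1^2+\litO{x_1^2},
\]
the cross terms being absorbed via $\bigO{x_1}\lnb{x_1}=\litO{1}$. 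Summing the three contributions yields the asserted expansion, whose $x_1^2$-coefficient is $\frac{y_{02}}{y_{01}^2}-\frac{b_{211}\lnb{y_{01}}}{a}$.

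The single delicate point --- and the only real obstacle --- is exactly this absorption of logarithms. Had I used \eqref{eq050} with merely its stated $\litO{x_1^2}$ error on $\exp{2at}$, multiplying that error by $t\sim\frac{1}{a}\lnb{x_1}$ would produce only $\litO{x_1^2\lnb{x_1}}$, too coarse for the claim. The escape is that the $x_1$-component of the flow carries no resonant logarithm at this order (its second iterate is $y_{01}\exp{at}+\frac{b_{111}y_{01}^2}{a}\exp{2at}$), so its inversion has relative error $\bigO{x_1}$; this upgrades the error in $\exp{2at}$ to $\bigO{x_1^3}$, whereupon $t\cdot\bigO{x_1^3}=\bigO{x_1^3\lnb{x_1}}=\litO{x_1^2}$. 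Everything else is routine bookkeeping with $\litO{\cdot}$ and $\bigO{\cdot}$.
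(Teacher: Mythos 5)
Your argument is correct and follows the paper's own route: the paper's proof is exactly to substitute the second and third expressions of \eqref{eq050} into the second expression of \eqref{eq049} and simplify. Your sharpening of the remainders (power-law relative errors taken from Theorem~\ref{thm003} rather than the bare $\litO{\cdot}$ statements of \eqref{eq049}--\eqref{eq050}) is a legitimate refinement rather than a detour: as you observe, the cross term $\lnb{x_1}\cdot\litO{x_1^2}$ is not automatically $\litO{x_1^2}$, so the quantitative bounds, which the paper leaves implicit in ``then simplify,'' are precisely what justifies the stated $\litO{x_1^2}$ error.
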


\begin{proof}
Substitute the second and third of \eqref{eq050} into the second of
\eqref{eq049} then simplify.
\end{proof}

\paragraph{$\diamond$ Case 2: ${\kappa\in\setb{3,4,\ldots}}$:} Using \eqref{eq038},
\[
    \iterateD{1}{t}{y_{01}} = \twobyonematrix{ y_{01} \, \exp{at} }{ 0 }
    \mymbox{and}
    \iterateD{2}{t}{y_{01}} = \twobyonematrix{ y_{01} \, \exp{at} + \sqb{ \frac{b_{111}y_{01}^2}{a} } \exp{2at} }{ \sqb{ \frac{b_{211}y_{01}^2}{\rb{2-\kappa}a} } \exp{2at} - \sqb{ \frac{b_{211}y_{01}^2}{\rb{2-\kappa}a} } \exp{\kappa at} }.
\]
By Theorem~\ref{thm002},
\[
    x_1(t) = y_{01} \, \exp{at} + \rb{ \frac{b_{111}y_{01}^2}{a} } \exp{2at} + \bigO{\exp{3at}}
    \mymbox{and}
    x_2(t) = \sqb{ \frac{b_{211}y_{01}^2}{\rb{2-\kappa}a} } \exp{2at} + \bigO{\exp{3at}}
\]
as ${t\to\infty}$. By inspection, ${x_2=\bigO{x_1^2}}$ as ${x_1 \to 0^+}$. To take this further, we need the following.

\begin{claim} \label{claim003}
There exist constants $\seq{\xi}{i}{1}{\kappa}$ and $\seq{\rho}{i}{2}{\kappa-1}$ (which do not depend on the initial
condition) and a constant $\varrho$ (which may depend on the initial
condition) such that, as ${t\to\infty}$,
\begin{subequations} \label{eq051}
\begin{align}
    x_1(t) &= \sum_{i=1}^\kappa \xi_i \rb{ \exp{at} y_{01} }^i + \bigO{\exp{\rb{\kappa+1}at}},
    \mymbox{with}
    \xi_1 = 1, \label{eq051a} \\
    \mymbox{and}
    x_2(t) &= \sum_{i=2}^{\kappa-1} \rho_i \rb{ \exp{at} y_{01} }^i + \varrho \rb{ \exp{\kappa at} } + \bigO{\exp{\rb{\kappa+1}at}},
    \mymbox{with}
    \rho_2 = \frac{b_{211}}{\rb{2-\kappa}a}. \label{eq051b}
\end{align}
\end{subequations}
\end{claim}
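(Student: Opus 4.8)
The plan is to read the two expansions directly off the $\kappa$-th iterate, using Theorem~\ref{thm003} for error control. Since $\kappa\geq 3>\alpha=2$, every estimate in this section holds with $\sigma=0$. Because $p=\kappa$ here, the $\kappa$-th iterate $\iterateD{\kappa}{t}{\vec{y}_0}$ is precisely the first iterate of the second group, namely the one given by \eqref{eq047a}; Theorem~\ref{thm003} with $m=p=\kappa$ (so $\alpha+\rb{m-1}\beta=\kappa+1$ and $\rb{m+1-p}\beta+1=2$) yields $\norm{\flowC{t}{\vec{x}_0}-\iterateD{\kappa}{t}{\vecpsif{\vec{x}_0}}}=\bigO{\exp{\rb{\kappa+1}at}}$ as $t\to\infty$ for fixed $\vec{x}_0$. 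Hence it suffices to expand $\iterateD{\kappa}{t}{\vec{y}_0}$ explicitly, discard everything that decays faster than $\exp{\kappa at}$, and identify the coefficients of what remains; throughout, $y_{01}=\psiif{1}{\vec{x}_0}$ and $y_{02}=\psiif{2}{\vec{x}_0}$, and the iteration is run with $y_{01}$ (and $y_{02}$) as free parameters, so the coefficients it produces do not involve $\vec{x}_0$.

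The core step is an inductive description of the first group of iterates \eqref{eq038}. By induction on $m\in\setb{1,\dots,\kappa-1}$ one proves that $\iterateDi{1}{m}{t}{y_{01}}$ is a polynomial in $\exp{at}y_{01}$ with zero constant term and linear coefficient $1$; that $\iterateDi{2}{m}{t}{y_{01}}$ is a polynomial in $\exp{at}y_{01}$ of lowest degree $2$ together with a finite sum of terms $r(t)\exp{\kappa at}$ with $r$ a polynomial; and that for each fixed $i$ the coefficient of $\rb{\exp{at}y_{01}}^i$ eventually becomes independent of $m$. The induction step uses three observations: $\vec{b}$ is homogeneous quadratic, so $\scif{b}{j}{\iterateD{m}{s}{y_{01}}}$ is again a finite sum of terms $r(s)\exp{\ell a s}$ with $r$ a polynomial and $\ell\geq 2$ an integer; the operator $g\mapsto\integral{t}{\infty}{\exp{a\rb{t-s}}g(s)}{s}$ sends such sums to such sums (since $\integral{t}{\infty}{s^{k}\exp{\ell a s}}{s}$, $\ell\geq 2$, is a polynomial times $\exp{\ell at}$); and the operator $g\mapsto\integral{0}{t}{\exp{\kappa a\rb{t-s}}g(s)}{s}$ does the same, raising the degree of the $t$-polynomial only when applied to the resonant frequency $\exp{\kappa a s}$. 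The base cases are the formulas for $\iterateD{1}{t}{y_{01}}$ and $\iterateD{2}{t}{y_{01}}$ already displayed; in particular the coefficient of $\rb{\exp{at}y_{01}}^2$ in $\iterateDi{2}{2}{t}{y_{01}}$ is $\frac{b_{211}}{\rb{2-\kappa}a}$, and one checks it is never changed by later iterations, which gives $\rho_2$.

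Now expand \eqref{eq047a} with $p=\kappa$. The first component $\exp{at}y_{01}-\integral{t}{\infty}{\exp{a\rb{t-s}}\scif{b}{1}{\iterateD{\kappa-1}{s}{y_{01}}}}{s}$ is, by the previous step, a polynomial in $\exp{at}y_{01}$; its terms of degree $\leq\kappa$ are $\sum_{i=1}^{\kappa}\xi_i\rb{\exp{at}y_{01}}^i$, with $\xi_i$ for $i\leq\kappa-1$ inherited from the first group, $\xi_\kappa$ determined here, and $\xi_1=1$, while the rest is $\bigO{\exp{\rb{\kappa+1}at}}$. The second component $\exp{\kappa at}y_{02}+\integral{0}{t}{\exp{\kappa a\rb{t-s}}\scif{b}{2}{\iterateD{\kappa-1}{s}{y_{01}}}}{s}$ produces $\sum_{i=2}^{\kappa-1}\rho_i\rb{\exp{at}y_{01}}^i$ from the non-resonant frequencies $2,\dots,\kappa-1$, a contribution at the resonant order $\exp{\kappa at}$ whose coefficient is a fixed combination of the $b_{ijk}$ times $y_{01}^{\kappa}$, and a remainder $\bigO{\exp{\rb{\kappa+1}at}}$; adding $\exp{\kappa at}y_{02}$ shows that the total coefficient of $\exp{\kappa at}$ is $\varrho$, which depends on $\vec{x}_0$ only through $y_{02}=\psiif{2}{\vec{x}_0}$, whereas the $\xi_i$ and $\rho_i$ do not. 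Finally, replace $\iterateD{\kappa}{t}{\vecpsif{\vec{x}_0}}$ by $\flowC{t}{\vec{x}_0}$ up to $\bigO{\exp{\rb{\kappa+1}at}}$ using Theorem~\ref{thm003} to obtain \eqref{eq051}.

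The step I expect to be the main obstacle is the resonance bookkeeping at order $\exp{\kappa at}$. Because $\integral{0}{t}{\exp{\kappa a\rb{t-s}}\exp{\kappa a s}}{s}=t\exp{\kappa at}$, one must locate the first iteration at which a forcing of frequency $\exp{\kappa a s}$ reaches the $x_2$-equation — and, through the coupling of the two components, the $x_1$-equation — and determine precisely its effect at the retained order $\exp{\kappa at}$, so as to confirm that the initial-condition dependence is confined to the single coefficient $\varrho$, which is the order at which solutions with distinct initial data first separate. This is the higher-order manifestation of the resonance $\lambda_2=\kappa\lambda_1$ that, for $\kappa=2$, already appears at leading order as the logarithm in Proposition~\ref{prop004}, so the same care with polynomial-in-$t$ prefactors is required.
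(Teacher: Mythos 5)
You follow the same route as the paper, whose proof of this claim is a one-sentence appeal to Theorems~\ref{thm002} and \ref{thm003} and the definitions \eqref{eq038} and \eqref{eq047a}: expand the $\kappa$-th iterate and transfer the expansion to the flow via the $m=p=\kappa$ case of Theorem~\ref{thm003}. That framework, and your structural induction on the first group of iterates, are sound, and the argument for \eqref{eq051a} goes through. The genuine gap is precisely the step you defer as ``the main obstacle'': you assert that the second component of \eqref{eq047a} contributes at the resonant order only a constant multiple of $\exp{\kappa at}$. Your own induction shows that second components carry terms $r(t)\exp{\kappa at}$ with $r$ a polynomial, and the resonant integral $\integral{0}{t}{\exp{\kappa a\rb{t-s}}\exp{\kappa as}}{s}=t\,\exp{\kappa at}$ converts any nonzero $\exp{\kappa as}$-component of the forcing $\scif{b}{2}{\iterateD{\kappa-1}{s}{y_{01}}}$ into a secular term proportional to $t\,\exp{\kappa at}$ in $\iterateDi{2}{\kappa}{t}{\vec{y}_0}$; such a term is neither of the form $\varrho\,\exp{\kappa at}$ nor $\bigO{\exp{\rb{\kappa+1}at}}$, and nothing in \eqref{eq047a} cancels it. You never show that this coefficient vanishes, and in general it does not: for $\kappa=3$ it equals $\fracslash{b_{211}\rb{2b_{111}-b_{212}}y_{01}^3}{a}$.

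Moreover, the gap cannot be closed by more careful bookkeeping, because the asserted form \eqref{eq051b} is too strong for a general quadratic nonlinearity. Take $\kappa=3$, $\scif{b}{1}{\vec{x}}=x_1^2$, $\scif{b}{2}{\vec{x}}=x_1^2$ (so $b_{111}=b_{211}=1$, all other $b_{ijk}=0$). Then
\[
    \iterateDi{2}{3}{t}{\vec{y}_0}
    = \exp{3at}y_{02} + \integral{0}{t}{ \exp{3a\rb{t-s}} \rb{ \exp{as}y_{01} + \fracslash{y_{01}^2\exp{2as}}{a} }^2 }{s}
\]
contains the term $\rb{\fracslash{2y_{01}^3}{a}}\,t\,\exp{3at}$, and Theorem~\ref{thm003} (error $\bigO{\exp{4at}}$) forces the same term to appear in $x_2(t)$; this can be checked directly, since $\dot{x}_1=ax_1+x_1^2$ is solvable in closed form and $x_2$ then follows by variation of constants. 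Hence for this system no choice of the constants $\rho_2$ and $\varrho$ makes \eqref{eq051b} hold, and correspondingly \eqref{eq052} acquires an $x_1^{3}\lnb{x_1}$ term. So your proof (and, for that matter, the paper's one-line proof) requires either an additional hypothesis guaranteeing that the order-$\kappa$ resonant coefficient vanishes, or an extra secular term $\varrho'\,t\,\exp{\kappa at}$ in \eqref{eq051b} (with the corresponding $x_1^\kappa\lnb{x_1}$ term downstream), exactly parallel to the $\kappa=2$ case \eqref{eq049}, where the term $\rb{b_{211}y_{01}^2}\,t\,\exp{2at}$ is retained.
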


\begin{proof}
It follows from Theorems~\ref{thm002} and \ref{thm003}, the
definition \eqref{eq038} of the first ${\kappa-1}$ iterates, and the
definition \eqref{eq047a} of the $\kappa^\text{th}$ iterate.
\end{proof}

\begin{prop} \label{prop005}
Consider the initial value problem \eqref{eq048}, where
${\kappa\in\setb{3,4,\ldots}}$, $\norm{\vec{x}_0}$ is sufficiently
small, ${\vec{y}_0=\vecpsif{\vec{x}_0}}$, and
${\psiif{1}{\vec{x}_0}>0}$. If $\vec{x}(t)$ is the solution, then
for some constants $\seq{c}{i}{2}{\kappa-1}$ (which depend only on
the differential equation) and $C$ (which depends on the
differential equation and the initial condition) we can write $x_2$
in terms of $x_1$ as
\begin{equation} \label{eq052}
    x_2 = \sum_{i=2}^{\kappa-1} c_i \, x_1^i + C \, x_1^\kappa + \bigO{x_1^{\kappa+1}}
    \mymbox{as}
    x_1 \to 0^+,
    \mymbox{with}
    c_2 = \frac{b_{211}}{\rb{2-\kappa}a}.
\end{equation}
\end{prop}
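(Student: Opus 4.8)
The plan is to invert the asymptotic expansion for $x_1(t)$ supplied by Claim~\ref{claim003} and substitute it into the expansion for $x_2(t)$, exactly in the spirit of \S\ref{sec002.004.001} and Proposition~\ref{prop004}. Set $u := \exp{at}\,y_{01}$. Since ${y_{01}=\psiif{1}{\vec{x}_0}>0}$ and ${a<0}$, we have ${u>0}$ and ${u\to0^+}$ as ${t\to\infty}$. Rewriting \eqref{eq051} in terms of $u$ and using ${\exp{\kappa at}=u^\kappa/y_{01}^\kappa}$ and ${\exp{\rb{\kappa+1}at}=\bigO{u^{\kappa+1}}}$, Claim~\ref{claim003} reads ${x_1(t)=u+\sum_{i=2}^\kappa\xi_iu^i+\bigO{u^{\kappa+1}}}$ and ${x_2(t)=\sum_{i=2}^{\kappa-1}\rho_iu^i+\rb{\varrho/y_{01}^\kappa}u^\kappa+\bigO{u^{\kappa+1}}}$ as ${t\to\infty}$. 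In particular ${x_1=u\rb{1+\bigO{u}}}$, so $x_1>0$ for $t$ large, $x_1\sim u$, and hence ${\bigO{u^{\kappa+1}}=\bigO{x_1^{\kappa+1}}}$; so $x_1\to0^+$ corresponds to $u\to0^+$.

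Next I would perform the power-series reversion. From ${x_1=u+\sum_{i=2}^\kappa\xi_iu^i+\bigO{u^{\kappa+1}}}$ one obtains, by the same iterative back-substitution used in \S\ref{sec002.004.001} (write ${u=x_1\rb{1+\bigO{x_1}}}$, substitute repeatedly, truncate at order $\kappa$), an expansion ${u=x_1+\sum_{i=2}^\kappa d_ix_1^i+\bigO{x_1^{\kappa+1}}}$ as ${x_1\to0^+}$, where ${d_2,\ldots,d_\kappa}$ are determined recursively by ${\xi_2,\ldots,\xi_\kappa}$ and therefore depend only on the differential equation \eqref{eq048}. Consequently, for each ${i\in\setb{2,\ldots,\kappa}}$, $u^i$ is a polynomial in $x_1$ with lowest-order term $x_1^i$ whose coefficients up to degree $\kappa$ depend only on the $d_j$ (hence only on the differential equation), plus a remainder $\bigO{x_1^{\kappa+1}}$.

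Substituting this into the expansion for $x_2(t)$, the sum ${\sum_{i=2}^{\kappa-1}\rho_iu^i}$ contributes a polynomial ${\sum_{i=2}^{\kappa-1}c_ix_1^i+\gamma x_1^\kappa+\bigO{x_1^{\kappa+1}}}$ in which every $c_i$ and $\gamma$ depends only on the differential equation (being built from ${\rho_2,\ldots,\rho_{\kappa-1}}$ and ${d_2,\ldots,d_{\kappa-1}}$), while the term ${\rb{\varrho/y_{01}^\kappa}u^\kappa}$ contributes ${\rb{\varrho/y_{01}^\kappa}x_1^\kappa+\bigO{x_1^{\kappa+1}}}$. Setting ${C:=\gamma+\varrho/y_{01}^\kappa}$, which depends on the differential equation and on the initial condition through $\varrho$ and $y_{01}$, gives \eqref{eq052}. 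For the stated value of $c_2$: since ${u=x_1+\bigO{x_1^2}}$ we get ${u^2=x_1^2+\bigO{x_1^3}}$, and the only term of ${\sum_{i=2}^{\kappa-1}\rho_iu^i}$ of order $x_1^2$ is $\rho_2u^2$, so ${c_2=\rho_2=b_{211}/\rb{\rb{2-\kappa}a}}$ by \eqref{eq051b}.

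The only point requiring care is the bookkeeping: one must check that the initial-condition-dependent constant $\varrho$ enters only at order $x_1^\kappa$, which is what makes ${c_2,\ldots,c_{\kappa-1}}$ ``universal'' — this is immediate because $\varrho$ multiplies ${\exp{\kappa at}=u^\kappa/y_{01}^\kappa}$ — and that the ${\bigO{u^{\kappa+1}}}$ remainders become genuine ${\bigO{x_1^{\kappa+1}}}$ under the reversion, which follows from ${x_1\sim u}$. There is no analytic obstacle beyond composing finitely many truncated power series and tracking which coefficients are forced by the differential equation alone.
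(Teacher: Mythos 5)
Your proposal is correct and follows essentially the same route as the paper's proof: invert the expansion \eqref{eq051a} for $x_1$ in powers of $\exp{at}\,y_{01}$ (your $u$), obtaining coefficients that depend only on the differential equation, then substitute into \eqref{eq051b} and observe that the initial-condition-dependent constant $\varrho$ first enters at order $x_1^\kappa$, yielding \eqref{eq052} with ${c_2=\rho_2}$. Your write-up merely makes the reversion and bookkeeping steps more explicit than the paper's brief argument.
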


\begin{proof}
Note that ${x_1=\bigO{\exp{at}}}$ as ${t\to\infty}$ and
${\exp{at}=\bigO{x_1}}$ as ${x_1 \to 0^+}$, which follow from
\eqref{eq051a}. Now, by inverting \eqref{eq051a} in a process we
have done many times before, it follows that there are constants
$\seq{\nu}{i}{1}{\kappa}$ which do not depend on the initial
condition such that
\begin{equation} \label{eq053}
    \exp{at} y_{01} = \sum_{i=1}^\kappa \nu_i \, x_1^i + \bigO{x_1^{\kappa+1}}
    \mymbox{as}
    x_1 \to 0^+,
    \mymbox{with}
    \nu_1 = 1.
\end{equation}
Substituting \eqref{eq053} in \eqref{eq051b},  we are left with
\eqref{eq052} for some constants $\seq{c}{i}{2}{\kappa-1}$ (which do
not depend on the initial condition) and $C$ (which may depend on
the initial condition).
\end{proof}

\subsection{Generalizing to a Diagonalizable Matrix}

Suppose $\mat{A}$ and $\vec{b}$ are as before except $\mat{A}$ is
only assumed to be diagonalizable with (fast and slow, respectively)
eigenvalues ${\lambda_1=a}$ and ${\lambda_2=\kappa a}$. Consider the
initial value problem
\begin{equation} \label{eq054}
    \dot{\vec{x}} = \mat{A} \vec{x} + \vecf{b}{\vec{x}},
    \quad
    \vec{x}(0) = \vec{x}_0.
\end{equation}
Now, there exists invertible matrix $\mat{P}$ such that
${\matLambda=\mat{P}^{-1}\mat{A}\mat{P}}$, where
${\matLambda:=\diag{a,\kappa a}}$. Define
${\vecf{r}{\vec{u}}:=\mat{P}^{-1}\vecf{b}{\vec{P}\vec{u}}}$ and
${\vec{u}_0:=\mat{P}^{-1}\vec{x}_0}$. The following is easy to
verify.

\begin{claim}
If $\vec{x}(t)$ is a solution of \eqref{eq054}, then
${\vec{u}(t):=\mat{P}^{-1}\vec{x}(t)}$ is a solution of
\begin{equation} \label{eq055}
    \dot{\vec{u}} = \matLambda \vec{u} + \vecf{r}{\vec{u}},
    \quad
    \vec{u}(0) = \vec{u}_0.
\end{equation}
Moreover, there are ${\delta,k_1,k_2>0}$ such that
${\norm{\vecf{r}{\vec{u}}} \leq k_1\norm{\vec{u}}^\alpha}$ and
${\norm{\mat{D}\vecf{r}{\vec{u}}} \leq k_2\norm{\vec{u}}^\beta}$ for
all ${\vec{u}\in\ballO{\delta}}$.
\end{claim}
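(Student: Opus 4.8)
The plan is to verify the two assertions in turn; both reduce to the linear change of variables $\vec{u}=\mat{P}^{-1}\vec{x}$ together with submultiplicativity of the operator norm induced by the Euclidean norm (so that $\norm{\mat{M}\vec{v}}\leq\norm{\mat{M}}\norm{\vec{v}}$ and $\norm{\mat{M}\mat{N}}\leq\norm{\mat{M}}\norm{\mat{N}}$). The statement is asserted to be "easy to verify," so the point of the proof is simply to record the bookkeeping cleanly.

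For the first assertion, I would differentiate $\vec{u}(t):=\mat{P}^{-1}\vec{x}(t)$ and substitute \eqref{eq054}:
\[
    \dot{\vec{u}}(t) = \mat{P}^{-1}\dot{\vec{x}}(t) = \rb{\mat{P}^{-1}\mat{A}\mat{P}}\mat{P}^{-1}\vec{x}(t) + \mat{P}^{-1}\vecf{b}{\mat{P}\,\mat{P}^{-1}\vec{x}(t)}.
\]
Since ${\matLambda=\mat{P}^{-1}\mat{A}\mat{P}}$ and ${\vecf{r}{\vec{u}}=\mat{P}^{-1}\vecf{b}{\mat{P}\vec{u}}}$, the right-hand side equals ${\matLambda\vec{u}(t)+\vecf{r}{\vec{u}(t)}}$, and ${\vec{u}(0)=\mat{P}^{-1}\vec{x}_0=\vec{u}_0}$; hence $\vec{u}$ solves \eqref{eq055}.

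For the second assertion, let ${\delta_0,c_1,c_2>0}$ be constants with ${\norm{\vecf{b}{\vec{x}}}\leq c_1\norm{\vec{x}}^\alpha}$ and ${\norm{\mat{D}\vecf{b}{\vec{x}}}\leq c_2\norm{\vec{x}}^\beta}$ on $\ballO{\delta_0}$, as furnished by \eqref{eq004}, and take ${\delta:=\delta_0/\rb{1+\norm{\mat{P}}}}$, so that ${\norm{\vec{u}}<\delta}$ forces ${\norm{\mat{P}\vec{u}}\leq\norm{\mat{P}}\norm{\vec{u}}<\delta_0}$. Then $\vec{r}$, a composition of the linear map $\mat{P}$, the $C^1$ map $\vec{b}$, and the linear map $\mat{P}^{-1}$, lies in $\Cr{1}{\ballO{\delta},\Rn}$, and for ${\vec{u}\in\ballO{\delta}}$ I would estimate
\[
    \norm{\vecf{r}{\vec{u}}} \leq \norm{\mat{P}^{-1}}\,c_1\norm{\mat{P}\vec{u}}^\alpha \leq \rb{\norm{\mat{P}^{-1}}\norm{\mat{P}}^\alpha c_1}\norm{\vec{u}}^\alpha,
\]
and, using the chain rule ${\mat{D}\vecf{r}{\vec{u}}=\mat{P}^{-1}\rb{\mat{D}\vecf{b}{\mat{P}\vec{u}}}\mat{P}}$,
\[
    \norm{\mat{D}\vecf{r}{\vec{u}}} \leq \norm{\mat{P}^{-1}}\norm{\mat{P}}\,c_2\norm{\mat{P}\vec{u}}^\beta \leq \rb{\norm{\mat{P}^{-1}}\norm{\mat{P}}^{1+\beta}c_2}\norm{\vec{u}}^\beta,
\]
so ${k_1:=\norm{\mat{P}^{-1}}\norm{\mat{P}}^\alpha c_1}$ and ${k_2:=\norm{\mat{P}^{-1}}\norm{\mat{P}}^{1+\beta}c_2}$ do the job.

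There is essentially no obstacle here; the only point deserving a word of care is the shrinking of the radius. One must replace the ball $\ballO{\delta_0}$ on which the bounds \eqref{eq004} hold in the $\vec{x}$-coordinates by a (possibly smaller) ball $\ballO{\delta}$ in the $\vec{u}$-coordinates with ${\mat{P}\,\ballO{\delta}\subseteq\ballO{\delta_0}}$, which is why the claim is stated with fresh constants ${\delta,k_1,k_2>0}$ rather than the original ones. Everything else is routine manipulation with the matrix norm.
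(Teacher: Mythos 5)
Your proof is correct and is exactly the routine verification the paper has in mind: the paper offers no proof at all, stating only that the claim ``is easy to verify,'' and your change-of-variables computation together with the operator-norm estimates for $\vecf{r}{\vec{u}}=\mat{P}^{-1}\vecf{b}{\mat{P}\vec{u}}$ (including the shrinking of the radius so that $\mat{P}\,\ballO{\delta}\subseteq\ballO{\delta_0}$) supplies precisely the omitted bookkeeping.
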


\subsubsection{Associated Iterates}

The iterates $\setb{\iterateD{m}{t}{\vecpsif{\vec{u}_0}}}_{m=1}^\infty$, defined with respect to $\matLambda$ and $\vec{r}$, allow us to construct \emph{associated iterates}, $\setb{\mat{P}\iterateD{m}{t}{\vecpsif{\mat{P}^{-1}\vec{x}_0}}}_{m=1}^\infty$, for the original system.

\begin{thm}
Let $\vec{x}(t)$ be the solution of the initial value problem
\eqref{eq054}. Consider the iterates given by \eqref{eq038} and
\eqref{eq047} and the transformation given by \eqref{eq046}, defined
with respect to \eqref{eq055}. For any sufficiently small
${\sigma>0}$, there exist ${\delta>0}$ and ${\seq{k}{m}{1}{\infty}
\subset \Rd{}_+}$ such that, for all
${\rb{t,\vec{x}_0}\in\Omega_\delta}$ and ${m\in\Nd{}}$,
\[
    \norm{ \vec{x}(t) - \mat{P} \iterateD{m}{t}{\vecpsif{\mat{P}^{-1}\vec{x}_0}} }
    \leq k_m \exp{ \sqb{ \alpha + \rb{ m - 1 } \beta } \sqb{ a + \sigma } t } \norm{ \vec{x}_0 }^{\max{}{0,m+1-p}\beta+1}.
\]
\end{thm}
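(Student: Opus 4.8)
The plan is to transfer the closeness estimate from the diagonalized system \eqref{eq055} back to the original system \eqref{eq054} by conjugation with $\mat{P}$. First I would invoke the preceding Claim: if $\vec{x}(t)$ solves \eqref{eq054}, then $\vec{u}(t):=\mat{P}^{-1}\vec{x}(t)=\flowC{t}{\vec{u}_0}$ solves \eqref{eq055}, where $\vec{u}_0=\mat{P}^{-1}\vec{x}_0$, and the nonlinearity $\vecf{r}{\vec{u}}=\mat{P}^{-1}\vecf{b}{\mat{P}\vec{u}}$ inherits bounds of the form $\norm{\vecf{r}{\vec{u}}}\leq k_1\norm{\vec{u}}^\alpha$ and $\norm{\mat{D}\vecf{r}{\vec{u}}}\leq k_2\norm{\vec{u}}^\beta$ with the \emph{same} exponents $\alpha$ and $\beta$. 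In particular $\matLambda=\diag{a,\kappa a}$ has the same ratio $\kappa$ and hence the same value of $p=\floor{\rb{\kappa-\alpha}/\beta}+2$, so Theorems~\ref{thm002} and \ref{thm003} apply verbatim to \eqref{eq055} with $\matLambda$ and $\vec{r}$ in place of $\mat{A}$ and $\vec{b}$.

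Next I would combine the two ranges of $m$ into a single estimate. Theorem~\ref{thm002} gives, for $1\leq m\leq p-1$,
\[
    \norm{ \flowC{t}{\vec{u}_0} - \iterateD{m}{t}{\psiif{1}{\vec{u}_0}} } \leq k_m \, \exp{ \sqb{ \alpha + \rb{ m - 1 } \beta } \sqb{ a + \sigma } t } \norm{ \vec{u}_0 },
\]
while Theorem~\ref{thm003} gives, for $m\geq p$,
\[
    \norm{ \flowC{t}{\vec{u}_0} - \iterateD{m}{t}{\vecpsif{\vec{u}_0}} } \leq k_m \exp{ \sqb{ \alpha + \rb{ m - 1 } \beta } \sqb{ a + \sigma } t } \norm{ \vec{u}_0 }^{\rb{m+1-p}\beta+1}.
\]
Since $\max\setb{0,m+1-p}=0$ exactly when $m\leq p-1$, the power $\norm{\vec{u}_0}^{\max\setb{0,m+1-p}\beta+1}$ specializes correctly in each range, so both bounds are captured uniformly by the single exponent $\max\setb{0,m+1-p}\beta+1$. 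Taking $\delta$ to be the smaller of the two radii furnished by the theorems (still independent of $m$ by the remarks following each) makes the estimate valid for all $\rb{t,\vec{u}_0}\in\Omega_\delta$ and all $m\in\Nd{}$.

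Then I would apply $\mat{P}$ to both sides. Writing $\vec{x}(t)-\mat{P}\iterateD{m}{t}{\vecpsif{\mat{P}^{-1}\vec{x}_0}}=\mat{P}\sqb{\flowC{t}{\vec{u}_0}-\iterateD{m}{t}{\vecpsif{\vec{u}_0}}}$ and using $\norm{\mat{P}\vec{v}}\leq\norm{\mat{P}}\norm{\vec{v}}$ together with $\norm{\vec{u}_0}=\norm{\mat{P}^{-1}\vec{x}_0}\leq\norm{\mat{P}^{-1}}\norm{\vec{x}_0}$, the right-hand side picks up a factor $\norm{\mat{P}}\norm{\mat{P}^{-1}}^{\max\setb{0,m+1-p}\beta+1}$, which I can absorb into a new constant $k_m$ (still satisfying the summability behaviour of Remark~\ref{rem001}, since it is multiplied by at most a fixed power of $\norm{\mat{P}^{-1}}$). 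One also needs $\norm{\vec{x}_0}$ small enough that $\norm{\vec{u}_0}=\norm{\mat{P}^{-1}\vec{x}_0}<\delta$; shrinking $\delta$ by the factor $\norm{\mat{P}^{-1}}^{-1}$ handles this. The only mild subtlety — not really an obstacle — is bookkeeping the two-piece definition of the iterates: for $m\leq p-1$ the iterate depends only on $\psiif{1}{\vec{u}_0}$ rather than the full $\vecpsif{\vec{u}_0}$, but since $\psiif{1}{\vec{u}_0}$ is just the first component of $\vecpsif{\vec{u}_0}$ (see \eqref{eq046}), writing $\iterateD{m}{t}{\vecpsif{\vec{u}_0}}$ uniformly is consistent with the convention established in the remark following \eqref{eq038}. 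This completes the argument.
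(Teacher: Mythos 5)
Your proposal is correct and follows essentially the same route as the paper: pass to the diagonalized system via $\vec{u}(t)=\mat{P}^{-1}\vec{x}(t)$, apply Theorems~\ref{thm002} and \ref{thm003}, and transfer the estimates back using $\norm{\vec{u}_0}\leq\norm{\mat{P}^{-1}}\norm{\vec{x}_0}$ and $\norm{\vec{x}(t)-\mat{P}\iterateD{m}{t}{\vecpsif{\vec{u}_0}}}\leq\norm{\mat{P}}\norm{\vec{u}(t)-\iterateD{m}{t}{\vecpsif{\vec{u}_0}}}$. The details you add (unifying the two ranges of $m$ through the exponent $\max{}{0,m+1-p}\beta+1$, absorbing the powers of $\norm{\mat{P}}$, $\norm{\mat{P}^{-1}}$ into the constants, and shrinking $\delta$) are exactly the bookkeeping the paper leaves implicit.
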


\begin{proof}
Let ${\delta>0}$ be sufficiently small so that if
${\norm{\vec{x}_0}<\delta}$ then the appropriate estimates apply.
Hence, assume ${\vec{x}_0\in\ballO{\delta}}$. Let
${\vec{u}(t):=\mat{P}^{-1}\vec{x}(t)}$, which is the solution to
\eqref{eq055}. Observe that
\[
    \norm{\vec{u}_0} \leq \norm{\mat{P}^{-1}} \norm{\vec{x}_0}
    \mymbox{and}
    \norm{ \vec{x}(t) - \mat{P} \iterateD{m}{t}{\vecpsif{\vec{u}_0}} }
    \leq \norm{\mat{P}} \norm{ \vec{u}(t) - \iterateD{m}{t}{\vecpsif{\vec{u}_0}} }.
\]
Applying Theorems~\ref{thm002} and \ref{thm003} give the conclusion.
\end{proof}

\subsubsection{Resonance, Quadratic Nonlinearity, and Relating
Components} \label{sec003.004.002}

Consider the initial value problem \eqref{eq054} with the particular
nonlinear part
\begin{equation} \label{eq056}
    \vecf{b}{\vec{x}} := \twobyonematrix{ b_{111} x_1^2 + b_{112} x_1 x_2 + b_{122} x_2^2 }{ b_{211} x_1^2 + b_{212} x_1 x_2 + b_{222} x_2^2 },
\end{equation}
where ${\setb{b_{ijk}}_{i,j,k=1}^2\subset\Rd{}}$ are constants with
at least one $b_{ijk}$ being non-zero. The nonlinear part for the
diagonalized initial value problem \eqref{eq055} is of the form,
where $\setb{r_{ijk}}_{i,j,k=1}^2$ are constants,
\[
    \vecf{r}{\vec{u}} = \twobyonematrix{ r_{111} u_1^2 + r_{112} u_1 u_2 + r_{122} u_2^2 }{ r_{211} u_1^2 + r_{212} u_1 u_2 + r_{222} u_2^2 }.
\]

We want to be able to write $x_2$ as a function of $x_1$ when
$\vec{x}(t)$ approaches the origin in the slow direction and there
is resonance in the eigenvalues, that is, when
${\kappa\in\setb{2,3,\ldots}}$. (The no-resonance case
${\kappa\not\in\setb{2,3,\ldots}}$ can be handled using iterates but
also by using Poincar\'{e}'s Theorem. See, for example, Theorem~19
of the authors' \cite{CalderSiegel}.) Observe that the material of
\S\ref{sec003.003} applies to \eqref{eq055}.

Since $\vec{x}(t)$ approaches the origin in the slow direction from
the right, we can assume ${p_{11}>0}$ and ${v_{01}>0}$, where
${\vec{v}_0:=\vecpsif{\vec{u}_0}}$ with $\vecpsi$ being defined with
respect to the diagonalized system \eqref{eq055}. To see why we can
take ${p_{11}>0}$, observe that we can take the first column of
$\mat{P}$ to be the slow eigenvector and we can take the slow
eigenvector to have positive first component. Note the slope of the
slow eigenvector is $\fracslash{p_{21}}{p_{11}}$, so we expect
${x_2=\rb{\fracslash{p_{21}}{p_{11}}}x_1+\litO{x_1}}$ as ${x_1 \to
0^+}$. To see why we can take ${v_{01}>0}$, observe
\[
    u_1(t) = v_{01} \, \exp{at} + \litO{\exp{at}},
    \quad
    u_2(t) = \litO{\exp{at}},
    \mymbox{and}
    x_1(t) = p_{11} v_{01} \, \exp{at} + \litO{\exp{at}}
\]
as ${t\to\infty}$, the last of which uses the relationship ${x_1=p_{11}u_1+p_{12}u_2}$.

\begin{thm} \label{thm004}
Consider the initial value problems \eqref{eq054}, with
$\vecf{b}{\vec{x}}$ given by \eqref{eq056}, and \eqref{eq055}.
Suppose ${\kappa\in\setb{2,3,\ldots}}$, ${p_{11}>0}$,
$\norm{\vec{x}_0}$ is sufficiently small, and ${v_{01}>0}$ where
${\vec{v}_0:=\vecpsif{\vec{u}_0}}$ with $\vecpsi$ as in
\eqref{eq046} defined with respect to \eqref{eq055}. Let
$\vec{x}(t)$ be the solution to \eqref{eq054}.
\begin{enumerate}[(a)]
    \item
        If ${\kappa=2}$, then we can write $x_2$ in terms of $x_1$
        as
        \begin{equation} \label{eq057}
            x_2
            = \sqb{ \frac{p_{21}}{p_{11}} } x_1 + \sqb{ \frac{r_{211}\det{\mat{P}}}{p_{11}^3a} } x_1^2 \, \lnb{x_1}
            + \sqb{ \frac{\det{\mat{P}}}{p_{11}^3} } \sqb{ \frac{v_{02}}{v_{01}^2} - \frac{ r_{211} \lnb{p_{11}v_{01}} }{a} } x_1^2 + \litO{x_1^2}
            \mymbox{as}
            x_1 \to 0^+.
        \end{equation}
    \item
        If ${\kappa\in\setb{3,4,\ldots}}$, then we can write $x_2$ in terms of $x_1$
        as
        \begin{equation} \label{eq058}
            x_2 = \sqb{ \frac{p_{21}}{p_{11}} } x_1 + \sum_{i=2}^{\kappa-1} c_i \, x_1^i + C \, x_1^\kappa + \bigO{x_1^{\kappa+1}}
            \mymbox{as}
            x_1 \to 0^+
        \end{equation}
        for some constants $\seq{c}{i}{2}{\kappa-1}$ (which depend only on the
        differential equation) and $C$ (which depends on the differential
        equation and the initial condition).
\end{enumerate}
\end{thm}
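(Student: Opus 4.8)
The plan is to transfer everything to the diagonalized problem \eqref{eq055} and then undo the linear map $\vec{x}=\mat{P}\vec{u}$ by pure algebra. Since $\vecf{r}{\vec{u}}=\mat{P}^{-1}\vecf{b}{\mat{P}\vec{u}}$ is again a homogeneous quadratic when $\vec{b}$ is given by \eqref{eq056}, and $\matLambda=\diag{a,\kappa a}$ with $\kappa\in\setb{2,3,\ldots}$, the entire development of \S\ref{sec003.003} --- in particular Proposition~\ref{prop004} (if $\kappa=2$) and Proposition~\ref{prop005} (if $\kappa\in\setb{3,4,\ldots}$) --- applies verbatim to \eqref{eq055}, with $\vec{y}_0$ replaced by $\vec{v}_0=\vecpsif{\vec{u}_0}$, each $y_{0i}$ by $v_{0i}$, and $b_{211}$ by $r_{211}$. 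The hypothesis $v_{01}>0$ is precisely the positivity condition $\psiif{1}{\cdot}>0$ required there, and $\norm{\vec{u}_0}\leq\norm{\mat{P}^{-1}}\norm{\vec{x}_0}$ is small. From the relations recorded just before the theorem, $u_1(t)=v_{01}\,\exp{at}+\litO{\exp{at}}$ with $v_{01}>0$ and $a<0$, and $x_1(t)=p_{11}v_{01}\,\exp{at}+\litO{\exp{at}}$ with $p_{11}>0$; hence $u_1(t)$ and $x_1(t)$ both decrease to $0^+$ and are of the same order, so ``$u_1\to0^+$'' and ``$x_1\to0^+$'' describe the same regime along the solution.

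Next I record the needed linear algebra: since $\mat{P}^{-1}=\rb{\det{\mat{P}}}^{-1}\twobytwomatrix{p_{22}}{-p_{12}}{-p_{21}}{p_{11}}$, we have $u_1=\rb{p_{22}x_1-p_{12}x_2}/\det{\mat{P}}$ and, solving the second row of $\vec{u}=\mat{P}^{-1}\vec{x}$ for $x_2$, the \emph{exact} identity
\[
    x_2 = \rb{\frac{p_{21}}{p_{11}}} x_1 + \rb{\frac{\det{\mat{P}}}{p_{11}}} u_2 .
\]
So it suffices to express $u_2$ as a function of $x_1$. Because $u_2=\bigO{u_1^2\lnb{u_1}}$ when $\kappa=2$ and $u_2=\bigO{u_1^2}$ when $\kappa\geq3$ --- in either case $u_2=\litO{u_1}$ --- the relation $x_1=p_{11}u_1+p_{12}u_2$ inverts to $u_1=\rb{x_1/p_{11}}+\bigO{x_1^2\lnb{x_1}}$ (respectively $u_1=\rb{x_1/p_{11}}+\bigO{x_1^2}$); in particular $\lnb{u_1}=\lnb{x_1}-\lnb{p_{11}}+\bigO{x_1\lnb{x_1}}$ and $u_1^2=\rb{x_1^2/p_{11}^2}+\litO{x_1^2}$.

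For part (a) I substitute Proposition~\ref{prop004}, applied to \eqref{eq055}, namely $u_2=\rb{r_{211}/a}u_1^2\lnb{u_1}+\sqb{v_{02}/v_{01}^2-r_{211}\lnb{v_{01}}/a}u_1^2+\litO{u_1^2}$, into the displayed identity. Using $u_1^2\lnb{u_1}=\rb{x_1^2/p_{11}^2}\lnb{x_1}-\rb{\lnb{p_{11}}/p_{11}^2}x_1^2+\bigO{x_1^3\rb{\lnb{x_1}}^2}$ together with $u_1^2=\rb{x_1^2/p_{11}^2}+\litO{x_1^2}$, collecting the $x_1^2$ terms (here $\lnb{p_{11}}+\lnb{v_{01}}=\lnb{p_{11}v_{01}}$ and $\bigO{x_1^3\rb{\lnb{x_1}}^2}=\litO{x_1^2}$), and multiplying through by $\det{\mat{P}}/p_{11}$ yields \eqref{eq057}. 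For part (b) I substitute Proposition~\ref{prop005}, applied to \eqref{eq055}: $u_2=\sum_{i=2}^{\kappa-1}c_i\,u_1^i+C\,u_1^\kappa+\bigO{u_1^{\kappa+1}}$, where $c_2,\ldots,c_{\kappa-1}$ depend only on the differential equation and $C$ possibly on the initial condition. Inserting this into $x_1=p_{11}u_1+p_{12}u_2$ and inverting the resulting power series gives $u_1=\sum_{i=1}^{\kappa}\nu_i x_1^i+\bigO{x_1^{\kappa+1}}$ with $\nu_1=1/p_{11}$, where $\nu_1,\ldots,\nu_{\kappa-1}$ involve only $p_{11},p_{12},c_2,\ldots,c_{\kappa-1}$ while $\nu_\kappa$ also involves $C$; resubstituting into the series for $u_2$ and then into the displayed identity gives \eqref{eq058}, with the $x_1^i$-coefficients ($2\leq i\leq\kappa-1$) depending only on the differential equation and the $x_1^\kappa$-coefficient $C$ depending also on the initial condition.

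The argument is bookkeeping rather than anything deep; the one place that needs genuine care is the logarithmic accounting in part (a), where $u_1$ must be expanded precisely enough that the shift $\lnb{u_1}=\lnb{x_1}-\lnb{p_{11}}+(\text{error})$ produces the exact $\lnb{p_{11}v_{01}}$ inside the $x_1^2$-coefficient while the error, once multiplied by $\lnb{x_1}$, remains genuinely $\litO{x_1^2}$. In part (b) the only subtlety is tracking the initial-condition-dependent coefficient $C$ of Proposition~\ref{prop005} through the two series manipulations, to confirm that it enters only the top ($x_1^\kappa$) coefficient of the final expansion and none of the lower ones.
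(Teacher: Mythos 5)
Your proof is correct and follows essentially the same route as the paper: apply the results of \S\ref{sec003.003} (Proposition~\ref{prop004} for $\kappa=2$, Proposition~\ref{prop005}/Claim~\ref{claim003} for $\kappa\geq3$) to the diagonalized system \eqref{eq055} and then undo the change of variables $\vec{x}=\mat{P}\vec{u}$ by series inversion. Your exact identity $x_2=\rb{p_{21}/p_{11}}x_1+\rb{\det{\mat{P}}/p_{11}}u_2$ simply makes explicit the ``routine simplification'' the paper leaves to the reader, and your logarithmic bookkeeping in part (a) correctly reproduces the $\lnb{p_{11}v_{01}}$ term of \eqref{eq057}.
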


\begin{proof}
As mentioned above, $\vec{x}(t)$ approaches the origin in the slow direction and is
strictly positive for sufficiently large $t$.
\begin{enumerate}[(a)]
    \item
        Assume we can write $u_2$ as a function of $u_1$, say ${u_2=f(u_1)}$. Then, ${x_1=p_{11}u_1+p_{12}f(u_1)}$ since ${\vec{x}=\mat{P}\vec{u}}$. Assume further that we can solve this relation for $u_1$ in terms of $x_1$, say ${u_1=g(x_1)}$. Thus, since ${\vec{x}=\mat{P}\vec{u}}$ we have ${x_2=h(x_1)}$, where ${h(x_1):=p_{21}g(x_1)+p_{22}f(g(x_1))}$. With this in mind, the relation \eqref{eq057}, after routine simplification, follows from Proposition~\ref{prop004} and techniques already employed in this paper. Note that ${\det{\mat{P}} \equiv p_{11}p_{22}-p_{12}p_{21} \ne 0}$.
    \item
        The relation \eqref{eq058} follows from Claim~\ref{claim003}, the fact ${\vec{x}=\mat{P}\vec{u}}$, and the method used in the proof of Proposition~\ref{prop005}.
\end{enumerate}
\end{proof}

\begin{rem}
Suppose that, in Theorem~\ref{thm004}, the components of
$\vecf{b}{\vec{x}}$ are polynomial (instead of purely quadratic)
with lowest order ${\alpha\in\setb{2,3,\ldots}}$. If
${\kappa=\alpha}$, then the expression for $x_2$ in terms of $x_1$
as ${x_1 \to 0^+}$ will contain $\lnb{x_1}$. If ${\kappa>\alpha}$,
then the expression for $x_2$ will be a Taylor series in $x_1$ with
${Cx_1^\kappa}$ being the lowest-order term which depends on the
initial condition.
\end{rem}

\subsection{Generalizing to Higher Dimensions} \label{sec003.005}

\subsubsection{Introduction}

The techniques of this section can be generalized to the
$n$-dimensional case where the eigenvalues of $\mat{A}$ are
widely-spaced relative to $\vecf{b}{\vec{x}}$. The derivations and
proofs will be omitted since they are tedious and reminiscent.
Consider the general initial value problem \eqref{eq001} with ${n
\geq 2}$ and ${\kappa\geq\alpha}$. Suppose that ${\mat{J}\in\Rd{n
\times n}}$ is the real Jordan canonical form of $\mat{A}$ which
satisfies the same ordering \eqref{eq003} for the eigenvalues
$\seq{\lambda}{i}{1}{n}$. Then, there exists invertible matrix
${\mat{P}\in\Rd{n \times n}}$ such that
${\mat{J}=\mat{P}^{-1}\mat{A}\mat{P}}$.

We need to partition the Jordan blocks of $\mat{J}$. Suppose the
eigenvalues of $\mat{J}$ have ${\ell\in\setb{2,\ldots,n}}$ distinct
real parts $\seq{\hmu}{j}{1}{\ell}$, each with multiplicity $n_j$ so
that ${\sum_{j=1}^\ell n_j=n}$, which are ordered according to
${\hmu_\ell\leq\cdots\leq\hmu_1<0}$. Then, we can write
${\mat{J}=\bigoplus_{j=1}^\ell \mat{J}_j}$ where each
${\mat{J}_j\in\Rd{n_j \times n_j}}$ is the direct sum of all real
Jordan blocks having associated eigenvalues with real part $\hmu_j$.
Note that ${\hmu_1=\mu_1}$ and ${\hmu_\ell=\mu_n}$. We will write
all vectors ${\vec{x}\in\Rn}$ in component form as
${\vec{x}=\rb{\vec{x}_1,\ldots,\vec{x}_\ell}^T}$ with
${\vec{x}_j\in\Rd{n_j}}$ for each ${j\in\setb{1,\ldots,\ell}}$.

Hence, consider the initial value problem
\begin{equation} \label{eq059}
    \dot{\vec{u}} = \mat{J} \vec{u} + \vecf{r}{\vec{u}},
    \quad
    \vec{u}(0) = \vec{u}_0,
\end{equation}
where ${\vec{u}:=\mat{P}^{-1}\vec{x}}$,
${\vec{u}_0:=\mat{P}^{-1}\vec{x}_0}$, and
${\vecf{r}{\vec{u}}:=\mat{P}^{-1}\vecf{b}{\mat{P}\vec{u}}}$. Denote
the flow of \eqref{eq059} by $\flowE{t}{\vec{u}_0}$ and observe each
component satisfies the integral equation
\[
    \flowEj{j}{t}{\vec{u}_0} = \exp{t\mat{J}_j} \vec{u}_{0j} + \integral{0}{t}{ \exp{\rb{t-s}\mat{J}_j} \vecif{r}{j}{\flowE{s}{\vec{u}_0}} }{s}
    \quad
    \rb{ j = 1, \ldots, \ell }.
\]

Define
\[
    \kappa_j := \frac{\hmu_j}{\hmu_1}
    \quad \rb{ j = 1, \ldots, \ell }
    \mymbox{and}
    j_0 := \min{}{ \, j \in \setb{1,\ldots,\ell} \st \kappa_j \geq \alpha \, },
\]
and note ${1=\kappa_1<\kappa_2<\cdots<\kappa_\ell=\kappa}$ and
${j_0\in\setb{2,\ldots,\ell}}$. Observe ${1 \leq j < j_0}$
corresponds to blocks which are closely-spaced relative to the
nonlinear part. Define also
\[
    p_j :=
    \begin{cases}
        1, \mymbox{if} 1 \leq j < j_0 \\
        \ds{ \floor{ \frac{\kappa_j-\alpha}{\beta} } + 2 }, \mymbox{if} j_0 \leq j \leq \ell
    \end{cases}
    \quad \rb{ j = 1, \ldots, \ell }.
\]
Observe ${2 \leq p_{j_0} \leq \cdots \leq p_\ell}$ and
\[
    \alpha + \rb{p_j-2} \beta \leq \kappa_j < \alpha + \rb{p_j-1} \beta
    \quad
    \rb{ j = j_0, \ldots, \ell }.
\]
Note that it can be easily shown, for any ${\sigma>0}$, there exist
${\delta>0}$ and ${\seq{k}{j}{j_0}{\ell}\subset\Rd{}_+}$ such that
${\norm{\flowEj{j}{t}{\vec{u}_0}} \leq
k_j\,\exp{\rb{\alpha\mu_1+\sigma}t}\norm{\vec{u}_0}}$ for all
${\rb{t,\vec{u}_0}\in\Omega_\delta}$ and
${j\in\setb{j_0,\ldots,\ell}}$. This estimate is required to
establish the base case of Theorem~\ref{thm005} below.

\subsubsection{Iterates and the Transformation} \label{sec003.005.002}

Assume that $\norm{\vec{v}_0}$ is sufficiently small so that we can
define the iterates
$\setb{\iterateD{m}{t}{\vec{v}_0}}_{m=1}^\infty$. The definition,
given in Table~\ref{tab001}, must be split into many cases.

\begin{table}[t]
\begin{center}\setlength{\extrarowheight}{3pt}
\begin{small}
\begin{tabular}{c|c}
    \textbf{Component $j$} & \multirow{2}{*}{ \textbf{Definition of } \dsm{ \iterateDj{j}{m}{t}{\vec{v}_0} } } \\ \textbf{Index $m$} \\\hline\hline
    ${1 \leq j < j_0}$ & \multirow{2}{*}{ \dsm{ \exp{t\mat{J}_j} \vec{v}_{0j} } } \\ ${m=1}$ \\\hline
    ${1 \leq j < j_0}$ & \multirow{2}{*}{ \dsm{ \exp{t\mat{J}_j} \vec{v}_{0j} - \integral{t}{\infty}{ \exp{\rb{t-s}\mat{J}_j} \vecif{r}{j}{\iterateD{m-1}{s}{\vec{v}_0}} }{s} } } \\ ${m \geq 2}$ \\\hline
    ${j_0 \leq j \leq \ell}$ & \multirow{2}{*}{ \dsm{ \vec{0} } } \\ ${m=1}$ \\\hline
    ${j_0 \leq j \leq \ell}$ & \multirow{2}{*}{ \dsm{ \integral{0}{t}{ \exp{\rb{t-s}\mat{J}_j} \vecif{r}{j}{\iterateD{m-1}{s}{\vec{v}_0}} }{s} } } \\ ${2 \leq m < p_j}$ \\\hline
    ${j_0 \leq j \leq \ell}$ & \multirow{2}{*}{ \dsm{ \exp{t\mat{J}_j} \vec{v}_{0j} + \integral{0}{t}{ \exp{\rb{t-s}\mat{J}_j} \vecif{r}{j}{\iterateD{p_j-1}{s}{\vec{v}_0}} }{s} } } \\ ${m=p_j}$ \\\hline
    ${j_0 \leq j \leq \ell}$ & \multirow{2}{*}{ \dsm{ \iterateDj{j}{p_j}{t}{\vec{v}_0} - \integral{t}{\infty}{ \exp{\rb{t-s}\mat{J}_j} \sqb{ \vecif{r}{j}{\iterateD{m-1}{s}{\vec{v}_0}} - \vecif{r}{j}{\iterateD{p_j-1}{s}{\vec{v}_0}} } }{s} } } \\ ${m>p_j}$
\end{tabular}
\end{small}
\caption{Definition of the iterates
$\setb{\iterateD{m}{t}{\vec{v}_0}}_{m=1}^\infty$ for the initial
value problem \eqref{eq059}.} \label{tab001}
\end{center}
\end{table}

\begin{rem}
For a particular ${j\in\setb{j_0,\ldots,\ell}}$, $\vec{v}_{0j}$ only appears in the iterates $\setb{\iterateD{m}{t}{\vec{v}_0}}_{m=p_j}^\infty$.
\end{rem}

To connect the iterates with the flow, where $\norm{\vec{u}_0}$ is
sufficiently small, take ${\vec{v}_0:=\vecpsif{\vec{u}_0}}$, where
\begin{equation}  \label{eq060}
    \vecpsiif{j}{\vec{u}_0}
    :=
    \begin{cases}
        \vec{u}_{0j} + \integral{0}{\infty}{ \exp{-s\mat{J}_j} \vecif{r}{j}{\flowE{s}{\vec{u}_0}} }{s}, \mymbox{if} 1 \leq j < j_0 \\
        \vec{u}_{0j} + \integral{0}{\infty}{ \exp{-s\mat{J}_j} \sqb{ \vecif{r}{j}{\flowE{s}{\vec{u}_0}} - \vecif{r}{j}{\iterateD{p_j-1}{s}{\vecpsif{\vec{u}_0}}} } }{s}, \mymbox{if} j_0 \leq j \leq \ell
    \end{cases}.
\end{equation}
It can be easily shown that, for some constants ${\delta,k>0}$, the
transformation $\vecpsi$ satisfies
\[
    \norm{ \vecpsif{\vec{u}_0} - \vec{u}_0 } \leq k \norm{ \vec{u}_0 }^{\min{}{\alpha,\beta+1}}
    \mymbox{for all}
    \vec{u}_0 \in \ballO{\delta}.
\]

\begin{rem}
The term $\iterateD{p_j-1}{s}{\vecpsif{\vec{u}_0}}$ inside the
integral in \eqref{eq060} for the case ${j_0 \leq j \leq \ell}$ does
not depend on the components
$\setb{\vecpsiif{i}{\vec{u}_0}}_{i=j}^\ell$. Thus, the definition is
explicit.
\end{rem}

The iterates given in Table~\ref{tab001}, which are expressed in
terms of the unknown parameter $\vec{v}_0$, are useful for obtaining
the form of the asymptotic expansion for the flow
$\flowE{t}{\vec{u}_0}$ as ${t\to\infty}$ when
${\vec{v}_0=\vecpsif{\vec{u}_0}}$. Fortunately, just like with
closely-spaced eigenvalues, we can approximate
$\vecpsif{\vec{u}_0}$. The definition of the approximations
$\setb{\vecpsimf{m}{\vec{u}_0}}_{m=1}^\infty$ is given in
Table~\ref{tab002}.

\begin{table}[t]
\begin{center}\setlength{\extrarowheight}{3pt}
\begin{small}
\begin{tabular}{c|c}
    \textbf{Component $j$} & \multirow{2}{*}{ \textbf{Definition of } \dsm{ \vecpsiimf{j}{m}{\vec{u}_0} } } \\ \textbf{Index $m$} \\\hline\hline
    ${1 \leq j \leq \ell}$ & \multirow{2}{*}{ \dsm{ \vec{u}_{0j} } } \\ ${m=1}$ \\\hline
    ${1 \leq j < j_0}$ & \multirow{2}{*}{ \dsm{ \vec{u}_{0j} + \integral{0}{\infty}{ \exp{-s\mat{J}_j} \vecif{r}{j}{\iterateD{m+p_\ell-2}{s}{\vecpsimf{m-1}{\vec{u}_0}}} }{s} } } \\ ${m \geq 2}$ \\\hline
    ${j_0 \leq j \leq \ell}$ & \multirow{2}{*}{ \dsm{ \vec{u}_{0j} + \integral{0}{\infty}{ \exp{-s\mat{J}_j} \sqb{ \vecif{r}{j}{\iterateD{m+p_\ell-2}{s}{\vecpsimf{m-1}{\vec{u}_0}}} - \vecif{r}{j}{\iterateD{p_j-1}{s}{\vecpsimf{m-1}{\vec{u}_0}}} } }{s} } } \\ ${m \geq 2}$
\end{tabular}
\end{small}
\caption{Definition of the approximations
$\setb{\vecpsimf{m}{\vec{u}_0}}_{m=1}^\infty$ to the transformation
$\vecpsif{\vec{u}_0}$ for the initial value problem \eqref{eq059}.}
\label{tab002}
\end{center}
\end{table}

\subsubsection{Results}

\begin{prop}
Consider the iterates defined in Table~\ref{tab001} with respect to
the initial value problem \eqref{eq059} and let ${\sigma>0}$ be
sufficiently small. Then, there is a ${\delta>0}$ (independent of
$m$) such that if ${\norm{\vec{u}_0}<\delta}$ then
$\iterateD{m}{t}{\vecpsif{\vec{u}_0}}$ exists for each
${m\in\Nd{}}$, where $\vecpsif{\vec{u}_0}$ is as in \eqref{eq060}.
Moreover, there is a ${k>0}$ (independent of $m$) such that
\begin{equation} \label{eq061}
    \norm{ \iterateD{m}{t}{\vecpsif{\vec{u}_0}} } \leq k \, \exp{ \rb{\mu_1+\sigma} t } \norm{\vec{u}_0}
    \mymbox{for all}
    \rb{t,\vec{u}_0} \in \Omega_\delta, ~ m \in \Nd{}.
\end{equation}
\end{prop}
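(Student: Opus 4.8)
The plan is to prove \eqref{eq061} by induction on $m$, following the template of Proposition~\ref{prop001} and its two-dimensional widely-spaced counterparts (Proposition~\ref{prop003} and the proposition preceding Theorem~\ref{thm002}), the one genuinely new ingredient being the improper integrals that enter for the widely-spaced blocks. First I would fix $\sigma>0$ small enough that $\mu_1+\sigma<0$, that $\rb{\alpha+1}\sigma<\rb{\kappa_j-\alpha}\mu_1$ for every $j<j_0$ (the right side is then positive, since $\kappa_j<\alpha$), and that all the other exponents met below keep the signs forced by $\alpha+\rb{p_j-2}\beta\leq\kappa_j<\alpha+\rb{p_j-1}\beta$, $j\geq j_0$. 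Put $\vec{v}_0:=\vecpsif{\vec{u}_0}$ and record the constants to be carried through, all independent of $m$: the block estimates $\norm{\exp{\pm t\mat{J}_j}}\leq\ell_j\,\exp{\rb{\pm\hmu_j+\sigma}t}$ for $t\geq0$ (the analogue of \eqref{eq005} for $\mat{J}_j$, with $\hmu_j=\kappa_j\mu_1$); the growth bounds $\norm{\vecf{r}{\vec{w}}}\leq\ell\,\norm{\vec{w}}^\alpha$ and $\norm{\mat{D}\vecf{r}{\vec{w}}}\leq\ell'\,\norm{\vec{w}}^\beta$ near the origin, which $\vec{r}$ inherits from \eqref{eq004} through $\vecf{r}{\vec{u}}=\mat{P}^{-1}\vecf{b}{\mat{P}\vec{u}}$; and $\norm{\vec{v}_0}\leq\ell_0\,\norm{\vec{u}_0}$, from the transformation bound $\norm{\vecpsif{\vec{u}_0}-\vec{u}_0}\leq k\,\norm{\vec{u}_0}^{\min{}{\alpha,\beta+1}}$ noted above. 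The base case $m=1$ is immediate, since $\iterateDj{j}{1}{t}{\vec{v}_0}$ equals $\exp{t\mat{J}_j}\vec{v}_{0j}$ for $j<j_0$ and $\vec{0}$ otherwise.

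For the inductive step, assume \eqref{eq061} for all iterates of index at most $m$; I would bound $\iterateDj{j}{m+1}{t}{\vec{v}_0}$ block by block, using the rows of Table~\ref{tab001} relevant for index $m+1\geq2$. For $j<j_0$ the estimate is verbatim that of Proposition~\ref{prop001}: since $\kappa_j<\alpha$ the integrand exponent $\rb{\alpha-\kappa_j}\mu_1+\rb{\alpha+1}\sigma$ is negative, the integral $\int_t^\infty$ converges, $\norm{\exp{t\mat{J}_j}\vec{v}_{0j}}\leq\ell_0\ell_j\,\exp{\rb{\mu_1+\sigma}t}\norm{\vec{u}_0}$ because $\kappa_j\geq1$, and altogether $\norm{\iterateDj{j}{m+1}{t}{\vec{v}_0}}\leq\sqb{\ell_0\ell_j+c\,k^\alpha\delta^{\alpha-1}}\exp{\rb{\mu_1+\sigma}t}\norm{\vec{u}_0}$. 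For $j\geq j_0$ with $2\leq m+1<p_j$ the component is a $\int_0^t$ integral; extracting $\exp{\hmu_j t}$ leaves an $s$-integral whose exponent $\rb{\alpha-\kappa_j}\mu_1+\rb{\alpha-1}\sigma$ is positive (the inserted $\sigma$ covering the case $\kappa_j=\alpha$, where anyway $p_j=2$ empties this range), so the integral is at most a constant times $\exp{\alpha\rb{\mu_1+\sigma}t}\leq\exp{\rb{\mu_1+\sigma}t}$, yielding a bound $c\,k^\alpha\delta^{\alpha-1}\exp{\rb{\mu_1+\sigma}t}\norm{\vec{u}_0}$. The case $m+1=p_j$ merely adds $\exp{t\mat{J}_j}\vec{v}_{0j}$, again bounded by $\ell_0\ell_j\,\exp{\rb{\mu_1+\sigma}t}\norm{\vec{u}_0}$.

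The one delicate case is $j\geq j_0$ with $m+1>p_j$, where the correction to the $p_j$-th iterate is $-\integral{t}{\infty}{\exp{\rb{t-s}\mat{J}_j}\sqb{\vecif{r}{j}{\iterateD{m}{s}{\vec{v}_0}}-\vecif{r}{j}{\iterateD{p_j-1}{s}{\vec{v}_0}}}}{s}$. Bounding the two $\vec{r}_j$-terms separately by $\ell\,\norm{\cdot}^\alpha$ fails, as the resulting $s$-exponent $\rb{\alpha-\kappa_j}\mu_1$ is nonnegative and the integral diverges whenever $\kappa_j>\alpha$. Instead I would strengthen the induction so that it also delivers the companion estimate $\norm{\iterateD{m+1}{t}{\vec{v}_0}-\iterateD{m}{t}{\vec{v}_0}}\leq C_m\,\exp{\sqb{\alpha+\rb{m-1}\beta}\rb{\mu_1+\sigma}t}\norm{\vec{u}_0}$ for every $m$, which the recursive definitions yield directly: each iteration gains a decay factor $\exp{\beta\rb{\mu_1+\sigma}t}$ (Mean Value Theorem applied to $\vec{r}$), while the new term $\exp{t\mat{J}_j}\vec{v}_{0j}$ that appears at index $p_j$ decays like $\exp{\hmu_j t}\leq\exp{\sqb{\alpha+\rb{p_j-2}\beta}\mu_1 t}$ precisely because $\kappa_j\geq\alpha+\rb{p_j-2}\beta$. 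Telescoping gives $\norm{\iterateD{m}{t}{\vec{v}_0}-\iterateD{p_j-1}{t}{\vec{v}_0}}=\bigO{\exp{\sqb{\alpha+\rb{p_j-2}\beta}\rb{\mu_1+\sigma}t}\norm{\vec{u}_0}}$ for $m\geq p_j-1$; feeding this into the correction integral, the Mean Value Theorem bounds the integrand by a constant times $\exp{\rb{-\hmu_j+\sigma}\rb{s-t}}\,\exp{\sqb{\alpha+\rb{p_j-1}\beta}\rb{\mu_1+\sigma}s}\,\norm{\vec{u}_0}^{\beta+1}$, and the substitution $u=s-t$ shows the integral converges --- the convergence being exactly the strict inequality $\kappa_j<\alpha+\rb{p_j-1}\beta$ --- with value at most a constant times $\delta^\beta\exp{\rb{\mu_1+\sigma}t}\norm{\vec{u}_0}$. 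Carrying this companion estimate and arranging that both inductions close with constants independent of $m$ is the step I expect to be the main obstacle; it is exactly what the definition of $p_j$ is designed to permit.

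Finally, in every case the coefficient of $\exp{\rb{\mu_1+\sigma}t}\norm{\vec{u}_0}$ is a term $\ell_0\ell_j$, independent of $m$, $k$, and $\delta$, plus terms each carrying a strictly positive power of $\delta$ times a power of $k$. As in the proof of Proposition~\ref{prop001}, I would choose a constant $k_0$ with $k_0>\ell_0\ell_j$ for all $j$, then fix $k$ slightly above $k_0$ by the usual self-consistency relation, and finally shrink $\delta$ until the $\delta$-dependent remainder does not exceed $k-k_0$; summing over the finitely many blocks gives $\norm{\iterateD{m+1}{t}{\vecpsif{\vec{u}_0}}}\leq k\,\exp{\rb{\mu_1+\sigma}t}\norm{\vec{u}_0}$ on $\Omega_\delta$. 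Existence of $\iterateD{m+1}{t}{\vecpsif{\vec{u}_0}}$ is then automatic, since this bound keeps every iterate inside the ball on which the growth estimates for $\vec{r}$ and its Jacobian are valid, which closes the induction.
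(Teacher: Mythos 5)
Your proposal is correct in substance; note, though, that the paper never writes out a proof of this proposition at all --- in \S\ref{sec003.005} the authors explicitly omit the higher-dimensional derivations as ``tedious and reminiscent,'' pointing back to the template of Proposition~\ref{prop001}, and even the two-dimensional counterparts (the proposition giving \eqref{eq039} and Proposition~\ref{prop003}) are only said to be ``similar to Proposition~\ref{prop001}.'' Measured against that template, your argument follows the intended route for the blocks with ${1\leq j<j_0}$, for ${2\leq m<p_j}$, and for ${m=p_j}$ (induction plus the self-consistent choice of $k$ first and then $\delta$, exactly as in Proposition~\ref{prop001}), and you correctly isolate the one genuinely new difficulty: the flipped integral in the last row of Table~\ref{tab001} cannot be estimated by bounding the two $\vecif{r}{j}{\cdot}$ terms separately, since for ${\kappa_j>\alpha}$ that integral would diverge. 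Where you differ organizationally from what the paper's two-dimensional treatment suggests is in how you control $\vecf{r}{\iterateD{m}{s}{\vec{v}_0}}-\vecf{r}{\iterateD{p_j-1}{s}{\vec{v}_0}}$: the paper's structure (Theorems~\ref{thm002} and \ref{thm003} alongside \eqref{eq047b}) points to comparing each iterate with the flow and using the closeness estimates, whereas you introduce a companion estimate on consecutive iterate differences and telescope, keeping the induction entirely internal to the iterates and never invoking the flow beyond the bound $\norm{\vecpsif{\vec{u}_0}}\leq\ell_0\norm{\vec{u}_0}$. Both devices rest on the same arithmetic fact ${\alpha+\rb{p_j-2}\beta\leq\kappa_j<\alpha+\rb{p_j-1}\beta}$, and your telescoped bound with weakest exponent $\sqb{\alpha+\rb{p_j-2}\beta}\rb{\mu_1+\sigma}$, improved by $\beta\rb{\mu_1+\sigma}$ via the Mean Value Theorem, is exactly what makes the flipped integral converge. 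The point you flag as the ``main obstacle'' --- constants independent of $m$ --- is genuinely the delicate bookkeeping step, but your final paragraph resolves it in the standard way: every $m$-dependent contribution carries a strictly positive power of $\delta$ (and the companion constants can be made geometrically, indeed factorially, decaying as in Remark~\ref{rem001}), so fixing $k$ above the $\delta$-free part and then shrinking $\delta$ closes both inductions simultaneously. So the proposal is sound; it is a legitimate, arguably cleaner, alternative to the flow-comparison route the paper implicitly has in mind, at the cost of running a second (difference) induction in parallel rather than quoting closeness-to-flow theorems.
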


\begin{thm} \label{thm005}
Consider the iterates defined in Table~\ref{tab001} with respect to
the initial value problem \eqref{eq059} and let ${\sigma>0}$ be
sufficiently small. Then, there are constants ${\delta>0}$ and
${\seq{k}{m}{1}{\infty}\subset\Rd{}_+}$ such that
\begin{equation} \label{eq062}
    \norm{ \flowC{t}{\vec{x}_0} - \mat{P} \iterateD{m}{t}{\vecpsif{\mat{P}^{-1}\vec{x}_0}} }
    \leq k_m \, \exp{ \sqb{ \alpha + \rb{m-1} \beta } \sqb{ \mu_1 + \sigma } t } \norm{\vec{x}_0}^{\max{}{0,m+1-p_\ell}\beta+1}
\end{equation}
for all ${\rb{t,\vec{x}_0}\in\Omega_\delta}$ and ${m\in\Nd{}}$,
where $\flowC{t}{\vec{x}_0}$ is the solution of the initial value
problem \eqref{eq001} and $\vecpsif{\vec{u}_0}$ is as in
\eqref{eq060}.
\end{thm}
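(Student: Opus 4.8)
The plan is to reduce to the Jordan-form system and then imitate, block by block, the inductions in the proofs of Theorems~\ref{thm001}, \ref{thm002}, and \ref{thm003}. Put $\vec{u}_0:=\mat{P}^{-1}\vec{x}_0$; then $\vec{u}(t):=\mat{P}^{-1}\flowC{t}{\vec{x}_0}$ solves \eqref{eq059}, so $\flowC{t}{\vec{x}_0}=\mat{P}\flowE{t}{\vec{u}_0}$ and
\[
    \norm{ \flowC{t}{\vec{x}_0} - \mat{P}\iterateD{m}{t}{\vecpsif{\vec{u}_0}} }
    \leq \norm{\mat{P}} \, \norm{ \flowE{t}{\vec{u}_0} - \iterateD{m}{t}{\vecpsif{\vec{u}_0}} },
    \quad
    \norm{\vec{u}_0} \leq \norm{\mat{P}^{-1}} \, \norm{\vec{x}_0}.
\]
Hence it is enough to bound $\norm{\flowE{t}{\vec{u}_0}-\iterateD{m}{t}{\vecpsif{\vec{u}_0}}}$ by $k_m\,\exp{[\alpha+(m-1)\beta][\mu_1+\sigma]t}\norm{\vec{u}_0}^{\max{}{0,m+1-p_\ell}\beta+1}$ and then absorb $\norm{\mat{P}}$ and $\norm{\mat{P}^{-1}}$ into the $k_m$. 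Fix $\sigma>0$ small enough that $\mu_1+\sigma<0$ and that each of the finitely many strict inequalities $\alpha+(p_j-1)\beta>\kappa_j$ (the defining property of $p_j$ for $j\geq j_0$) survives replacing $\mu_1$ by $\mu_1+\sigma$; by the usual estimates for exponentials of Jordan blocks, there is a $k>0$ with $\norm{\exp{t\mat{J}_j}}\leq k\,\exp{(\hmu_j+\sigma)t}$ and $\norm{\exp{-t\mat{J}_j}}\leq k\,\exp{(-\hmu_j+\sigma)t}$ for all $t\geq0$, analogously to \eqref{eq005}.

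First, I would record, for each block $j$, the integral-equation representation of $\flowEj{j}{t}{\vec{u}_0}$ on which the entries of Table~\ref{tab001} are modelled. For $1\leq j<j_0$ one has $\kappa_j<\alpha$, so $\norm{\exp{-s\mat{J}_j}\vecif{r}{j}{\flowE{s}{\vec{u}_0}}}\leq k\,\exp{[(\alpha-\kappa_j)\mu_1+\sigma]s}\norm{\vec{u}_0}^\alpha$ decays exponentially and one may ``flip the integral'' to obtain $\flowEj{j}{t}{\vec{u}_0}=\exp{t\mat{J}_j}\vecpsiif{j}{\vec{u}_0}-\integral{t}{\infty}{\exp{(t-s)\mat{J}_j}\vecif{r}{j}{\flowE{s}{\vec{u}_0}}}{s}$, the block-$j$ analogue of \eqref{eq034}. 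For $j_0\leq j\leq\ell$ one inserts and subtracts $\vecif{r}{j}{\iterateD{p_j-1}{s}{\vecpsif{\vec{u}_0}}}$ inside the variation-of-constants integral; the Mean Value Theorem, the closeness of the first $p_j-1$ iterates (established exactly as in Theorem~\ref{thm002}), and the inequality $\alpha+(p_j-1)\beta>\kappa_j$ make the resulting ``difference'' integrand decay exponentially after multiplication by $\exp{-s\mat{J}_j}$, so it too may be flipped, which gives the analogue of \eqref{eq043} and the definition \eqref{eq060} of $\vecpsi$. The triangular dependence of $\vecpsiif{j}{\vec{u}_0}$ on the earlier components $\vecpsiif{i}{\vec{u}_0}$, $i<j$ (noted in the Remark after Table~\ref{tab001}, and valid because $p_i<p_j$ forces $i<j$), makes \eqref{eq060} a legitimate explicit definition. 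I would also invoke the Proposition immediately preceding this theorem for the uniform-in-$m$ bound $\norm{\iterateD{m}{t}{\vecpsif{\vec{u}_0}}}\leq k\,\exp{(\mu_1+\sigma)t}\norm{\vec{u}_0}$ and the estimate $\norm{\flowEj{j}{t}{\vec{u}_0}}\leq k_j\,\exp{(\alpha\mu_1+\sigma)t}\norm{\vec{u}_0}$ for $j\geq j_0$ recorded in \S\ref{sec003.005}.

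The core is then a single induction on $m$ for the full vector, treating in the inductive step each of the (up to six) cases of Table~\ref{tab001}. In every case the two integral representations combine so that --- because $\vec{v}_0=\vecpsif{\vec{u}_0}$ --- the $\exp{t\mat{J}_j}\vecpsiif{j}{\vec{u}_0}$ terms and/or the $\vecif{r}{j}{\iterateD{p_j-1}{s}{\vec{v}_0}}$ terms cancel exactly, leaving $\flowEj{j}{t}{\vec{u}_0}-\iterateDj{j}{m+1}{t}{\vec{v}_0}$ equal to one integral of $\exp{(t-s)\mat{J}_j}$ against a difference of the form $\vecif{r}{j}{\flowE{s}{\vec{u}_0}}-\vecif{r}{j}{\iterateD{m}{s}{\vec{v}_0}}$, except for the blocks $j\geq j_0$ with $m+1<p_j$, where one must additionally carry the uncaptured term $\exp{t\mat{J}_j}\vec{u}_{0j}$ and dominate it using $\alpha+((m+1)-1)\beta\leq\alpha+(p_j-2)\beta\leq\kappa_j$, which gives $\exp{[\alpha+m\beta]\mu_1 t}\geq\exp{\hmu_j t}$. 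Bounding the difference by the Mean Value Theorem (the Jacobian estimate in \eqref{eq004}) times the induction hypothesis and integrating the resulting exponentials produces, in every case, the decay rate $\exp{[\alpha+m\beta][\mu_1+\sigma]t}=\exp{[\alpha+((m+1)-1)\beta][\mu_1+\sigma]t}$ and a power $\norm{\vec{u}_0}^{q}$ with $q\geq\max{}{0,m+2-p_\ell}\beta+1$; since $\delta<1$ we have $\norm{\vec{u}_0}^{q}\leq\norm{\vec{u}_0}^{\max{}{0,m+2-p_\ell}\beta+1}$. The base case $m=1$ is as in Theorems~\ref{thm001}--\ref{thm003}: for $j<j_0$ the block error is $\bigO{\exp{\alpha(\mu_1+\sigma)t}\norm{\vec{u}_0}^\alpha}$, and for $j\geq j_0$ it is $\flowEj{j}{t}{\vec{u}_0}=\bigO{\exp{(\alpha\mu_1+\sigma)t}\norm{\vec{u}_0}}$. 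Summing the finitely many block bounds and conjugating back by $\mat{P}$ completes the induction.

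The main obstacle is purely organizational: selecting a single $\delta>0$ small enough that the Jacobian bound, the flow bound \eqref{eq006}, the uniform iterate bound, and the line-segment bounds hold simultaneously for every block $j$ and every $m$; verifying that the strict inequalities among $\alpha$, $\beta$, and the $\kappa_j$ are preserved under the $\sigma$-perturbations forced by nontrivial Jordan structure; and checking the claimed cancellations case by case in Table~\ref{tab001}, especially at the transitional index $m=p_j$, where the \eqref{eq043}-type representation of $\flowEj{j}$ first becomes available. None of this is conceptually new relative to Theorems~\ref{thm001}--\ref{thm003}; the entire difficulty lies in bookkeeping the many cases.
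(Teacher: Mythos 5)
The paper contains no proof of Theorem~\ref{thm005} --- \S\ref{sec003.005} explicitly omits the derivations as ``tedious and reminiscent'' of Theorems~\ref{thm001}--\ref{thm003} --- and your sketch is precisely that intended generalization: conjugation by $\mat{P}$, block-wise flipping of integrals justified by $\alpha+\rb{p_j-1}\beta>\kappa_j$ and the triangular (explicit) structure of \eqref{eq060}, a Mean Value Theorem induction on $m$, the base-case estimate $\norm{\flowEj{j}{t}{\vec{u}_0}}\leq k_j\,\exp{\rb{\alpha\mu_1+\sigma}t}\norm{\vec{u}_0}$ for $j\geq j_0$, and correct handling of the uncaptured $\exp{t\mat{J}_j}\vec{u}_{0j}$ terms together with the $\norm{\vec{x}_0}$-power bookkeeping through $p_\ell$. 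I find no gap beyond the case-by-case bookkeeping you yourself flag (including the interleaving of the definition of $\vecpsiif{j}{\vec{u}_0}$ with the closeness estimates for $m<p_j$, which your triangular-dependence observation resolves), so the proposal is correct and takes essentially the approach the paper indicates.
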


\begin{rem}
If $\mat{A}$ is diagonalizable and ${\kappa_{j_0}>\alpha}$ then we
can take ${\sigma=0}$ in \eqref{eq061} and \eqref{eq062}.
\end{rem}

\begin{prop}
Consider the transformation defined by \eqref{eq060} and the
approximations defined in Table~\ref{tab002}. There are constants
${\delta>0}$ and ${\seq{k}{m}{1}{\infty}\subset\Rd{}_+}$ such that
\[
    \norm{ \vecpsif{\vec{u}_0} - \vecpsimf{m}{\vec{u}_0} }
    \leq k_m \norm{ \vec{u}_0 }^{\min{}{\alpha,\beta+1}+\rb{m-1}\beta}
    \mymbox{for all}
    \vec{u}_0 \in \ballO{\delta}.
\]
\end{prop}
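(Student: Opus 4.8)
The plan is to argue by induction on $m$, following the pattern indicated for Proposition~\ref{prop002}: induction, the Triangle Inequality, the Mean Value Theorem, Theorem~\ref{thm005}, and a (higher-dimensional) analogue of Claim~\ref{claim002}. The induction will simultaneously establish that each integral defining $\vecpsimf{m}{\vec{u}_0}$ in Table~\ref{tab002} converges and that the asserted bound holds. For the base case $m=1$ we have $\vecpsimf{1}{\vec{u}_0}=\vec{u}_0$, so $\norm{\vecpsif{\vec{u}_0}-\vecpsimf{1}{\vec{u}_0}}=\norm{\vecpsif{\vec{u}_0}-\vec{u}_0}\leq k\,\norm{\vec{u}_0}^{\min{}{\alpha,\beta+1}}$ is precisely the transformation estimate recorded just after \eqref{eq060}. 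For the inductive step, assume the bound at level $m-1$ and estimate $\vecpsif{\vec{u}_0}-\vecpsimf{m}{\vec{u}_0}$ component by component, using \eqref{eq060} for $\vecpsi$ and Table~\ref{tab002} for $\vecpsimf{m}{}$. In each case the $j$-th block of this difference is an improper integral $\integral{0}{\infty}{\exp{-s\mat{J}_j}\sqb{\,\cdots\,}}{s}$ whose integrand is a sum of differences of the form $\vecif{r}{j}{\vec{p}(s)}-\vecif{r}{j}{\vec{q}(s)}$, with $\vec{p},\vec{q}$ taken from among $\flowE{s}{\vec{u}_0}$, $\iterateD{m+p_\ell-2}{s}{\vecpsimf{m-1}{\vec{u}_0}}$, $\iterateD{p_j-1}{s}{\vecpsif{\vec{u}_0}}$, and $\iterateD{p_j-1}{s}{\vecpsimf{m-1}{\vec{u}_0}}$ (the last two occurring only for $j_0\leq j\leq\ell$).

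Next I would bound each such difference in three factors. First, the Mean Value Theorem together with the Jacobian bound $\norm{\mat{D}\vecf{r}{\vec{u}}}\leq k_2\norm{\vec{u}}^\beta$, the iterate bound \eqref{eq061}, and the flow bound for \eqref{eq059} (obtained from \eqref{eq006} via $\vec{x}=\mat{P}\vec{u}$) contributes a factor $k\,\exp{\beta\rb{\mu_1+\sigma}s}\norm{\vec{u}_0}^\beta$. Second, inserting the intermediate iterates $\iterateD{m+p_\ell-2}{s}{\vecpsif{\vec{u}_0}}$ and $\iterateD{p_j-1}{s}{\vecpsif{\vec{u}_0}}$ and using the Triangle Inequality: the flow-versus-iterate pieces are controlled by Theorem~\ref{thm005} at index $m+p_\ell-2$ (for which $\rb{m+p_\ell-2}+1-p_\ell=m-1\geq1$, so its error carries the power $\norm{\vec{u}_0}^{\rb{m-1}\beta+1}$ together with enough exponential decay), while the iterate-versus-iterate pieces are controlled by a higher-dimensional analogue of Claim~\ref{claim002}---continuity of the iterates in the initial datum---combined with the induction hypothesis $\norm{\vecpsif{\vec{u}_0}-\vecpsimf{m-1}{\vec{u}_0}}\leq k_{m-1}\norm{\vec{u}_0}^{\min{}{\alpha,\beta+1}+\rb{m-2}\beta}$. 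Tallying the powers of $\norm{\vec{u}_0}$: the Lipschitz piece contributes $\beta+\rb{\min{}{\alpha,\beta+1}+\rb{m-2}\beta}=\min{}{\alpha,\beta+1}+\rb{m-1}\beta$, which is the target exponent, while the Theorem~\ref{thm005} piece contributes $\beta+\rb{\rb{m-1}\beta+1}=m\beta+1\geq\min{}{\alpha,\beta+1}+\rb{m-1}\beta$ (since $\beta+1\geq\min{}{\alpha,\beta+1}$), hence is harmless for $\norm{\vec{u}_0}$ small. Integrating in $s$ against $\norm{\exp{-s\mat{J}_j}}\leq k\,\exp{\rb{-\kappa_j\mu_1+\sigma}s}$ and collecting constants over $j=1,\ldots,\ell$ yields $k_m$; moreover, as in Remark~\ref{rem001}, one gets $k_{m+1}\leq\rb{r/m}k_m$ for some $m$-independent $r>0$, so $k_m\to0$ and the estimate upgrades to uniform convergence of $\vecpsimf{m}{\vec{u}_0}$ to $\vecpsif{\vec{u}_0}$ on $\ballO{\delta}$.

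The main obstacle is the exponent bookkeeping inside the $s$-integrals, specifically verifying their convergence for the widely-spaced blocks $j_0\leq j\leq\ell$, where $\exp{\rb{-\kappa_j\mu_1+\sigma}s}$ grows rapidly (here $-\kappa_j\mu_1\geq-\alpha\mu_1>0$). The crude continuity bound $\norm{\iterateD{m'}{s}{\vecpsif{\vec{u}_0}}-\iterateD{m'}{s}{\vec{w}_0}}\leq k\,\exp{\rb{\mu_1+\sigma}s}\norm{\vecpsif{\vec{u}_0}-\vec{w}_0}$---the direct analogue of Claim~\ref{claim002}---is by itself not enough to make these integrands decay. What is needed is a refined, block-aware continuity estimate: the sensitivity of the iterates to a perturbation of the $i$-th block of the initial datum should decay at that block's rate, like $\exp{\rb{\kappa_i\mu_1+\sigma}s}$, rather than merely like $\exp{\rb{\mu_1+\sigma}s}$. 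This, combined with the structural fact noted in the Remark after Table~\ref{tab001}---that $\iterateD{p_j-1}{s}{\cdot}$ is independent of the slowly-relaxing components $\vec{v}_{0i}$ with $p_i\geq p_j$---is exactly what makes the subtraction of the correction term $\vecif{r}{j}{\iterateD{p_j-1}{s}{\cdot}}$ in \eqref{eq060} and Table~\ref{tab002} do its job and restore convergence. Establishing this refined continuity estimate is itself an induction through the (in general several) groups of iterates and is where essentially all the effort lies; once it is in hand the remainder of the argument is routine and mirrors the closely-spaced case.
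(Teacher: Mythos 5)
Your overall inductive frame is the natural one (the paper itself omits this proof as ``tedious and reminiscent'' of Proposition~\ref{prop002}): base case from the estimate recorded after \eqref{eq060}, inductive step via the Triangle Inequality, the Mean Value Theorem, Theorem~\ref{thm005} at index ${m+p_\ell-2}$, and continuity of the iterates in the datum; and your exponent bookkeeping for the Theorem~\ref{thm005} piece is right. But the proof is not complete: the step you yourself say carries ``essentially all the effort'' --- the refined continuity estimate that is supposed to make the improper integrals for the widely-spaced blocks ${j_0\leq j\leq\ell}$ converge while still extracting the factor $\norm{\vecpsif{\vec{u}_0}-\vecpsimf{m-1}{\vec{u}_0}}$ --- is only asserted, never established. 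Since that is precisely where the new ($n$-dimensional, widely-spaced) difficulty lives, this is a genuine gap rather than a routine omission.

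Moreover, the refined lemma in the form you propose would not close the gap even if proved. Block-aware sensitivity of a \emph{single} iterate (``a perturbation of the $i$-th block of the datum propagates at rate $\exp{\rb{\kappa_i\mu_1+\sigma}s}$'') gives nothing beyond the crude Claim~\ref{claim002}-type bound for perturbations in the \emph{slow} blocks (small $\kappa_i$, e.g.\ ${i=1}$, where the rate is again only $\exp{\rb{\mu_1+\sigma}s}$); after the Mean Value factor $\exp{\beta\rb{\mu_1+\sigma}s}$ this cannot offset the growth $\exp{\rb{-\kappa_j\mu_1+\sigma}s}$ of $\norm{\exp{-s\mat{J}_j}}$ once ${\kappa_j\geq\beta+1}$, which is the typical situation here since ${\kappa_j\geq\alpha}$. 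The structural remark after Table~\ref{tab001} does not rescue this either: it concerns the fast-block data, which indeed do not enter $\iterateD{p_j-1}{s}{\cdot}$, whereas the problematic perturbations are in the slow-block data, which enter both $\iterateD{m+p_\ell-2}{s}{\cdot}$ and $\iterateD{p_j-1}{s}{\cdot}$. What is actually needed is a Lipschitz-in-the-datum estimate for the \emph{difference} of the two evaluations, i.e.\ for the map $\vec{v}_0\mapsto\vecif{r}{j}{\iterateD{m+p_\ell-2}{s}{\vec{v}_0}}-\vecif{r}{j}{\iterateD{p_j-1}{s}{\vec{v}_0}}$, with Lipschitz constant decaying at a rate exceeding $\kappa_j\abs{\mu_1}$ (which is available in principle because ${\kappa_j<\alpha+\rb{p_j-1}\beta}$): it is the cancellation between the two iterates evaluated at the \emph{same} datum, not block-aware decay of either one separately, that restores integrability and still lets the induction hypothesis supply the factor $\norm{\vecpsif{\vec{u}_0}-\vecpsimf{m-1}{\vec{u}_0}}$. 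Establishing this ``difference of differences'' estimate requires its own induction through the groups of iterates (in effect a Lipschitz version of the closeness results \eqref{eq040}/\eqref{eq062}); until that is done, or the key lemma is reformulated along these lines and proved, the inductive step does not go through.
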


\section{Application to the Michaelis-Menten Mechanism}
\label{sec004}

The Michaelis-Menten mechanism of an enzyme reaction will be our main application. Particularly, we will
specify the asymptotics of the scalar reduction for small
substrate concentration.

\subsection{Introduction}

The Michaelis-Menten mechanism, named after two researchers
who worked on the model, is the simplest chemical network modeling
the formation of a product from a substrate with the aid of an
enzyme. See, for example,
\cite{Cornish-Bowden,Henri,KeenerSneyd,LaidlerBunting,MichaelisMenten}.
There are different and more complicated networks involving, for
example, multiple enzymes, inhibition, and cooperativity. However,
even though the Michaelis-Menten mechanism is relatively simple,
there are many results on the mechanism and the standard
techniques of analyzing the reaction are easily adapted to the more
complicated networks.

In the scheme of Henri, later explored by Michaelis and
Menten, an enzyme $E$ reacts with the substrate $S$ and reversibly forms an
intermediate complex $C$, which decays into the product $P$ and original
enzyme. With reaction-rate constants $k_{-1}$, $k_1$, and $k_2$, this can be written symbolically as
\[
    S + E \revreaction{k_1}{k_{-\!1}} C \reaction{k_2} P + E.
\]
Technically, the reaction from $C$ to ${P+E}$ is reversible, but the
rate constant $k_{-\!2}$ is usually negligible and the reverse
reaction is omitted. Henri's original model allowed for the reverse
reaction. Moreover, in a cell or laboratory the product is harvested preventing the
reverse reaction.

Let lower-case letters stand for concentrations and $\tau$ for time.
The Law of Mass Action gives
\begin{equation} \label{eq063}
    \deriv{s}{\tau} = k_{-\!1} c - k_1 s e,
    \quad
    \deriv{e}{\tau} = \rb{ k_{-\!1} + k_2 } c - k_1 s e,
    \quad
    \deriv{c}{\tau} = k_1 s e - \rb{ k_{-\!1} + k_2 } c,
    \quad
    \deriv{p}{\tau} = k_2 c.
\end{equation}
Here, the initial conditions are arbitrary. Traditionally, ${s(0)=s_0}$, ${e(0)=e_0}$, and
${c(0)=0=p(0)}$.

Observe that, using \eqref{eq063}, the concentrations $s(\tau)$,
$c(\tau)$, and $e(\tau)$ do not depend on the value of $p(\tau)$.
Moreover, it follows from \eqref{eq063} that $c(\tau)$ and $e(\tau)$
satisfy the conservation law ${c(\tau)+e(\tau)=e_0}$. We can hence
consider only the system of two ordinary differential equations
\begin{equation} \label{eq064}
    \deriv{s}{\tau} = k_{-\!1} c - k_1 s \rb{ e_0 - c },
    \quad
    \deriv{c}{\tau} = k_1 s \rb{ e_0 - c } - \rb{ k_{-\!1} + k_2 } c.
\end{equation}
This is frequently referred to as the planar reduction of the
Michaelis-Menten mechanism. The traditional initial conditions are
${s(0)=s_0}$ and ${c(0)=0}$. However, we will again allow the
initial conditions to be arbitrary. The planar reduction
\eqref{eq064} can be reduced as well to the scalar reduction
\begin{equation} \label{eq065}
    \deriv{c}{s} = \frac{ k_1 s \rb{ e_0 - c } - \rb{ k_{-\!1} + k_2 } c }{ k_{-\!1} c - k_1 s \rb{ e_0 - c } }.
\end{equation}

We will make one last transformation. Define
\[
    t := k_1 e_0 \tau,
    \quad
    x := \frac{ k_1 s }{ k_{-\!1} + k_{2} },
    \quad
    y := \frac{c}{e_0},
    \quad
    \vep := \frac{ k_1 e_0 }{ k_{-\!1} + k_2 },
    \mymbox{and}
    \eta := \frac{ k_2 }{ k_{-\!1} + k_2 },
\]
which are all dimensionless, and note ${\vep>0}$ and ${0<\eta<1}$.
(We make no assumption on the size of $\vep$, which is traditionally
assumed to be small.) Thus, $t$, $x$, and $y$ are, respectively, a
scaled time, substrate concentration, and complex concentration.
Observe ${x=\fracslash{s}{K_m}}$ and ${\vep=\fracslash{e_0}{K_m}}$,
where ${K_m:=\fracslash{\rb{k_{-\!1}+k_2}}{k_1}}$ is the Michaelis
constant (the half-saturation concentration of the substrate). It is
easy to verify that the planar differential equation \eqref{eq064}
assumes the dimensionless form
\begin{equation} \label{eq066}
    \dot{x} = -x + \rb{ 1 - \eta + x } y,
    \quad
    \dot{y} = \vep^{-1} \sqb{ x - \rb{ 1 + x } y },
\end{equation}
where $\dot{}=\derivslash{}{t}$. Moreover, we can also write the
scalar differential equation \eqref{eq065} in the form
\begin{equation} \label{eq067}
    y' = \frac{ x - \rb{ 1 + x } y }{ \vep \sqb{ -x + \rb{ 1 - \eta + x } y } },
    \mymbox{where}
    ' := \deriv{}{x}.
\end{equation}

\subsection{Behaviour at the Origin}

The planar reduction \eqref{eq066}, along with initial conditions
${x(0)=x_0}$ and ${y(0)=y_0}$, can be written
\begin{equation} \label{eq068}
    \dot{\vec{x}} = \mat{A} \vec{x} + \vecf{b}{\vec{x}},
    \quad
    \vec{x}(0) = \vec{x}_0,
    \mymbox{where}
    \mat{A} := \twobytwomatrix{-1}{1-\eta}{\vep^{-1}}{-\vep^{-1}}
    \mymbox{and}
    \vecf{b}{\vec{x}} := x y \twobyonematrix{1}{-\vep^{-1}}.
\end{equation}
The matrix $\mat{A}$ has two distinct, real, negative eigenvalues
given by
\[
    \lambdapm := \frac{ -\rb{ \vep + 1 } \pm \sqrt{ \rb{ \vep + 1 }^2 - 4 \vep \eta } }{ 2 \vep }.
\]
Here, $\lambdap$ is the slow eigenvalue and $\lambdam$ is the fast
eigenvalue. It can be shown that the eigenvalues satisfy
${\lambdam<-1<-\eta<\lambdap<0}$. Moreover, it can be shown that the
nonlinear part $\vecf{b}{\vec{x}}$ satisfies
\[
    \norm{\vecf{b}{\vec{x}}} \leq \rb{ \shalf \sqrt{1+\vep^{-2}} } \norm{\vec{x}}^2
    \mymbox{and}
    \norm{\mat{D}\vecf{b}{\vec{x}}} \leq \rb{\sqrt{1+\vep^{-2}}} \norm{\vec{x}}.
\]

Define
\[
    \sigmapm := \frac{\lambdapm+1}{1-\eta}
    \mymbox{and}
    \kappa := \frac{\lambdam}{\lambdap} = \frac{ \vep + 1 + \sqrt{ \rb{ \vep + 1 }^2 - 4 \vep \eta } }{ \vep + 1 - \sqrt{ \rb{ \vep + 1 }^2 - 4 \vep \eta } }.
\]
Here, $\sigmap$ and $\sigmam$ are, respectively, the slopes of the slow and fast eigenvectors of $\mat{A}$. It can be shown ${\sigmam<0}$, ${1<\sigmap<\fracslash{1}{\rb{1-\eta}}}$, and ${\kappa > \max{}{\vep,\vep^{-1}} \geq 1}$.

We want to apply Theorem~\ref{thm004}. Consider the matrices
\[
    \mat{P} := \twobytwomatrix{1}{1}{\sigmap}{\sigmam},
    \quad
    \mat{P}^{-1} = \frac{1}{\sigmap-\sigmam} \twobytwomatrix{-\sigmam}{1}{\sigmap}{-1},
    \mymbox{and}
    \matLambda := \mat{P}^{-1} \mat{A} \vec{P} = \twobytwomatrix{\lambdap}{0}{0}{\lambdam}.
\]
If we define ${\vec{u}:=\mat{P}^{-1}\vec{x}}$,
${\vec{u}_0:=\mat{P}^{-1}\vec{x}_0}$, and
${\vecf{r}{\vec{u}}:=\mat{P}^{-1}\vecf{b}{\mat{P}\vec{u}}}$, then
the initial value problem \eqref{eq068} is transformed into an
initial value problem of the form considered in
\S\ref{sec003.004.002}. Note that
\[
    \vecf{r}{\vec{u}} = \frac{ \sigmap u_1^2 + \rb{ \sigmap + \sigmam } u_1 u_2 + \sigmam u_2^2 }{ \sigmap - \sigmam } \twobyonematrix{ -\sigmam - \vep^{-1} }{ \sigmap + \vep^{-1} }
    \mymbox{and}
    r_{211} = \frac{ \sigmap \rb{ \sigmap + \vep^{-1} } }{ \sigmap - \sigmam },
\]
where $r_{211}$ is the coefficient of the $u_1^2$ term in the second
component of $\vecf{r}{\vec{u}}$.

The traditional approach of finding the behaviour of a scalar
solution $y(x)$ to \eqref{eq067} as ${x \to 0^+}$ is to obtain a
power series solution.  Using the ansatz ${y(x)=\sum_{n=0}^\infty
\sigma_n x^n}$ in \eqref{eq067} yields
\begin{equation} \label{eq069}
\sigma_n =
    \begin{cases}
        0, \mymbox{if} n = 0 \\
        \sigmap, \mymbox{if} n = 1 \\
        \ds{ -\frac{ \sum_{k=2}^{n-1} \sqb{ \rb{n\!-\!k} \sigma_{n-k} \! + \! \rb{1\!-\!\eta} \rb{n\!-\!k\!+\!1} \sigma_{n-k+1} } \sigma_k \! + \! \sqb{ \rb{n\!-\!1} \sigma_1 \! + \! \vep^{-1} } \sigma_{n-1} }{ \vep^{-1} \! + \! \rb{1\!-\!\eta} \rb{n\!+\!1} \sigma_1 \! - \! n } }, \mymbox{if} n \geq 2
    \end{cases}.
\end{equation}
One problem with using the resulting series is that the denominator
in the recursive expression for $\sigma_n$ in \eqref{eq069} is zero
if and only if ${n=\kappa}$. The authors showed this in
\cite{CalderSiegel}. This issue is resolved in the following
theorem, which generalizes Lemma~22 of the same paper.

\begin{thm} \label{thm006}
Let $\vec{x}(t)$ be a solution to the initial value problem
\eqref{eq068}, where $\norm{\vec{x}_0}$ is sufficiently small, which
approaches the origin in the slow direction from the right. Let
$y(x)$ be the corresponding scalar solution to \eqref{eq067} and
consider the coefficients $\seq{\sigma}{n}{1}{\infty}$ given in
\eqref{eq069}. Then, we can write
\[
    y(x) =
    \begin{cases}
        \ds{ \sum_{n=1}^{\floor{\kappa}} \sigma_n x^n + C \, x^\kappa + \litO{ x^\kappa } }, \mymbox{if} \kappa\not\in\setb{2,3,\ldots} \\
        \ds{ \sigmap \, x + \sqb{ \frac{ \sigmap \rb{ \sigmap + \vep^{-1} } }{ -\lambdap } } x^2 \, \lnb{x} + C \, x^2 + \litO{ x^2 } }, \mymbox{if} \kappa = 2 \\
        \ds{ \sum_{n=1}^{\kappa-1} \sigma_n x^n + C \, x^\kappa + \bigO{x^{\kappa+1}} }, \mymbox{if} \kappa \in \setb{3,4,\ldots}
    \end{cases}
    \mymbox{as}
    x \to 0^+,
\]
where $C$ is some constant (which depends on the differential
equation and the initial condition).
\end{thm}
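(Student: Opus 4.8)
The plan is to feed the Michaelis--Menten system into the machinery of \S\ref{sec003.004.002} and then reconcile the resulting (partly undetermined) coefficients with the naive power series \eqref{eq069}. First I would record the dictionary: the initial value problem \eqref{eq068} is exactly \eqref{eq054} with $\vecf{b}{\vec{x}}$ of the form \eqref{eq056}, with $a=\lambdap$, $\kappa a=\lambdam$, and $\mat{P}$, $\mat{P}^{-1}$, $\matLambda$ the matrices displayed after \eqref{eq068}; thus $p_{11}=1>0$, $p_{21}=\sigmap$, $\det{\mat{P}}=\sigmam-\sigmap\ne 0$, and $\vecf{r}{\vec{u}}$, $r_{211}$ are as computed in the excerpt. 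Since $\vec{x}=\rb{x,y}^T$ we have $x_1=x$, $x_2=y$, and the scalar solution $y(x)$ of \eqref{eq067} is precisely the component relation ``$x_2$ in terms of $x_1$'' for the solution of \eqref{eq066}. The hypothesis that $\vec{x}(t)$ approaches the origin in the slow direction from the right, together with $x_1(t)=p_{11}v_{01}\,\exp{\lambdap t}+\litO{\exp{\lambdap t}}$ as $t\to\infty$, forces $v_{01}>0$ where $\vec{v}_0:=\vecpsif{\vec{u}_0}$. Finally $\kappa>\max{}{\vep,\vep^{-1}}\ge 1$, so $\kappa$ may be closely- or widely-spaced, and there is resonance precisely when $\kappa\in\setb{2,3,\ldots}$; the three cases of the statement are $\kappa\notin\setb{2,3,\ldots}$, $\kappa=2$, and $\kappa\in\setb{3,4,\ldots}$.

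For $\kappa=2$ I would simply apply Theorem~\ref{thm004}(a): substituting the dictionary into \eqref{eq057} gives
\[
    x_2=\sigmap\,x_1+\sqb{\frac{r_{211}\rb{\sigmam-\sigmap}}{\lambdap}}x_1^2\,\lnb{x_1}+C\,x_1^2+\litO{x_1^2}
    \mymbox{as}
    x_1\to 0^+ ,
\]
and since $r_{211}=\sfrac{\sigmap\rb{\sigmap+\vep^{-1}}}{\rb{\sigmap-\sigmam}}$ the bracketed coefficient collapses to $\sfrac{\sigmap\rb{\sigmap+\vep^{-1}}}{\rb{-\lambdap}}$; with $\sigmap=\sigma_1$ from \eqref{eq069} this is precisely the $\kappa=2$ line. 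For $\kappa\in\setb{3,4,\ldots}$, Theorem~\ref{thm004}(b) / \eqref{eq058} gives
\[
    x_2=\sigmap\,x_1+\sum_{i=2}^{\kappa-1}c_i\,x_1^i+C\,x_1^\kappa+\bigO{x_1^{\kappa+1}}
    \mymbox{as}
    x_1\to 0^+
\]
with $c_i$ depending only on the differential equation, so it remains to identify $c_i$ with $\sigma_i$.

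That identification is the heart of the matter. The genuine solution $y(x)$ satisfies \eqref{eq067}; clearing the denominator converts this into a polynomial relation among $x$, $y$, and $y'$. Since $y$ has the asymptotic expansion just produced and $y'$ is a rational function of $x,y$ (see \eqref{eq067}) whose denominator $\vep\sqb{-x+\rb{1-\eta+x}y}$ has leading term $\vep\lambdap x\ne 0$ along the slow manifold, $y'$ too admits an asymptotic expansion of power (or power--log) type, namely the term-by-term derivative of that of $y$. Substituting both into the cleared relation and equating coefficients of $x^n$ for every integer $n<\kappa$ reproduces verbatim the recursion \eqref{eq069}, whose denominator $\vep^{-1}+\rb{1-\eta}\rb{n+1}\sigma_1-n$ vanishes only at $n=\kappa$; hence by induction on $n$ each such coefficient must equal $\sigma_n$. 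This gives the polynomial part $\sum_{n=1}^{\kappa-1}\sigma_n x^n$ in the resonant case and $\sum_{n=1}^{\floor{\kappa}}\sigma_n x^n$ in the non-resonant case, with the coefficient of $x^\kappa$ left free (it is the initial-condition-dependent constant $C$) and, in the non-resonant case, the integer powers beyond $\floor{\kappa}$ absorbed into $\litO{x^\kappa}$.

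Finally, for $\kappa\notin\setb{2,3,\ldots}$ there is no resonance, the eigenvalues satisfy $\lambdam<\lambdap<0$ (so the convex hull of $\setb{\lambdap,\lambdam}$ excludes $0$), and $\vecf{b}$ is polynomial; Poincar\'e's Theorem --- equivalently the iterates of \S\ref{sec002} when $1<\kappa<2$ or of \S\ref{sec003} when $\kappa>2$, or Theorem~19 of \cite{CalderSiegel} --- yields an analytic conjugacy to the linear flow, whence $u_1(t)=v_{01}\,\exp{\lambdap t}+\cdots$ and $u_2(t)=\rb{\text{terms in }\exp{\lambdap t}}+C\,\exp{\lambdam t}+\cdots$ as $t\to\infty$. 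Inverting $x_1=u_1+u_2$ for $\exp{\lambdap t}$ and using $\exp{\lambdam t}=\rb{\exp{\lambdap t}}^\kappa\sim x_1^\kappa$ produces $y=\sum_{n=1}^{\floor{\kappa}}\rb{\cdots}x^n+C\,x^\kappa+\litO{x^\kappa}$, and the previous paragraph identifies the polynomial coefficients with $\sigma_1,\ldots,\sigma_{\floor{\kappa}}$. The main obstacle throughout is precisely this coefficient matching: turning the ``some constant'' outputs of Theorem~\ref{thm004} and of Poincar\'e's Theorem into the explicit $\sigma_n$ of the formal series --- that is, showing the true solution's asymptotic expansion must agree term-by-term with \eqref{eq069} up to exactly the order where its recursion first breaks down; the rest is substitution of the explicit $\mat{P}$, $\sigmapm$, and $r_{211}$ into \eqref{eq057}--\eqref{eq058} and routine simplification.
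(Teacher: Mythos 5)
Your proposal is correct and follows essentially the same route as the paper: the resonant cases $\kappa=2$ and $\kappa\in\setb{3,4,\ldots}$ are read off from Theorem~\ref{thm004} via the explicit $\mat{P}$, $\sigmapm$, $r_{211}$ dictionary, the non-resonant case is the content of Lemma~22 of \cite{CalderSiegel} (which you re-derive via Poincar\'e's Theorem, the route that paper itself uses), and the identification of the integer-power coefficients with the $\sigma_n$ of \eqref{eq069} rests on their being uniquely generated by the differential equation up to the order $n=\kappa$ where the recursion's denominator first vanishes. You simply spell out, via substitution into \eqref{eq067} and coefficient matching, the uniqueness argument the paper compresses into one sentence.
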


\begin{proof}
The first part was addressed in Lemma~22 of the aforementioned
paper. Note the coefficients of the integer powers of $x$ must
indeed be $\seq{\sigma}{n}{1}{\floor{\kappa}}$ since they are
generated uniquely by the differential equation. The second and
third parts are immediate consequences of Theorem~\ref{thm004}.
\end{proof}

The authors showed, in \cite{CalderSiegel}, that for any ${\vep>0}$
there is a unique attracting solution ${y=\cM(x)}$ (the slow
manifold) to \eqref{eq067} which satisfies
\[
    H(x) < \cM(x) < \alpha(x)
    \mymbox{for all}
    x > 0,
    \mymbox{where}
    H(x) := \frac{x}{1+x}
    \mymbox{and}
    \alpha(x) := \frac{x}{\sigmap^{-1}+x}
\]
are, respectively, the horizontal isocline (the quasi-steady-state
manifold) and the isocline for slope $\sigmap$. Theorem~\ref{thm006}
tells us the asymptotic behaviour of the slow manifold (and other
solutions) at the origin up to the term for which solutions are
distinguished by initial conditions.

Planar solutions approach the slow manifold exponentially fast and
then follow it as time proceeds. Hence, it is of value to study the
dynamics on the slow manifold which is of lesser dimension than the
original system. Moreover, knowing the position of the slow manifold
accurately allows, for example, all rate constants to be determined
independently from steady-state data \cite{RousselFraser}.

\begin{rem} \label{rem002}
We can use Theorem~\ref{thm006} to comment on the validity of two
hyperbolic rate laws, specifically the quasi-steady-state
approximation $H(x)$ and the $\alpha$-approximation $\alpha(x)$. For
any physically-relevant initial condition $\vec{x}_0$ (that is,
${x_0,y_0 \geq 0}$ and ${\vec{x}_0\ne\vec{0}}$) to the initial value
problem \eqref{eq068}, as shown in \cite{CalderSiegel}, the solution
$\vec{x}(t)$ satisfies ${\vec{x}(t)\to\vec{0}}$ as ${t\to\infty}$
with the approach being in the slow direction from the right. Since
${H(x)=x+\bigO{x^2}}$ and ${\alpha(x) = \sigmap x + \bigO{x^2}}$ as
${x \to 0^+}$ and ${x(t)=\bigO{\exp{\lambdap t}}}$ as
${t\to\infty}$, it follows from Theorem~\ref{thm006} that, in terms
of the dimensional quantities of the original planar reduction
\eqref{eq064}, if ${s_0,c_0 \geq 0}$ and ${\rb{s_0,c_0}\ne\rb{0,0}}$
then
\[
    c(\tau)
    = \frac{ e_0 s(\tau) }{ K_m + s(\tau) } + \bigO{ \exp{ \sqb{ k_1 e_0 \lambdap } \tau } }
    = \frac{ e_0 s(\tau) }{ K_m \, \sigmap^{-1} + s(\tau) } +
    \begin{cases}
        \bigO{ \exp{ \sqb{ k_1 e_0 \lambdam } \tau } }, \mymbox{if} 1 < \kappa < 2 \\
        \bigO{ \tau \, \exp{ \sqb{ 2 k_1 e_0 \lambdap } \tau } }, \mymbox{if} \kappa = 2 \\
        \bigO{ \exp{ \sqb{ 2 k_1 e_0 \lambdap } \tau } }, \mymbox{if} \kappa > 2
    \end{cases}
\]
as ${\tau\to\infty}$. In all cases of the size of $\kappa$, the $\alpha$-approximation is
better than the quasi-steady-state approximation.  (In fact, there
is no better hyperbolic rate law than the $\alpha$-approximation.)
\end{rem}

\section{Open Questions} \label{sec005}

It would be useful to allow the origin to be degenerate, that is,
replace ${\mu_1<0}$ in \eqref{eq003} with ${\mu_1 \leq 0}$. This
change would not be trivial because, generally, the origin would not
be asymptotically stable (there may be hyperbolic sectors) and there
would not be exponential decay for the flow. Moreover, it would be
an interesting exercise to specify the asymptotic expansion of
solutions for a specific system when the initial condition is on an
invariant manifold. Finally, similar to Remark~\ref{rem002}, the
rate of exponential-time decay for the error in the semi-infinite
time interval version of the Quasistatic-State Approximation Theorem
in \S8.3 of \cite{Hoppensteadt} should be determinable.

\section*{Acknowledgements}

\addcontentsline{toc}{section}{Acknowledgements}

Most of the material in this paper, along with additional examples
and details, can be found in \cite{CalderThesis}, which is one of
the authors' (Calder) Ph.D. thesis written under the supervision of
the other author (Siegel). Moreover, the authors would like to
acknowledge Martin Wainwright whom once worked as a summer
undergraduate research assistant with one of the authors (Siegel).
Special cases of a few results in this paper were explored by Martin
Wainwright during this summer research term.

\end{document}